\documentclass[a4paper,11pt]{article}

\addtolength{\topmargin}{-1.5cm}
\addtolength{\oddsidemargin}{-1.5cm}
\addtolength{\textwidth}{3cm}
\textheight=650pt 

\usepackage{amsmath}
\usepackage{amssymb}
\usepackage{amsthm}
\usepackage{refcount}
\usepackage[normalem]{ulem} 

\usepackage[dvips,pdftex]{color}
\usepackage{pgf,tikz}
\usetikzlibrary{shapes,automata,positioning,arrows}
\usepackage{graphicx}
\usepackage{setspace}
\usepackage[version=3]{mhchem}
\usepackage{chemfig} 
\usepackage[sort&compress,comma,square,numbers]{natbib}
\usepackage{booktabs}

\usepackage{enumerate,hyperref}
\usepackage{xypic}

\usepackage{url}

\usepackage{mathabx}
 
\newcommand{\R}{\mathbb{R}}
\newcommand{\N}{{\mathbb N}}
\newcommand{\Z}{{\mathbb Z}}

\newcommand{\mC}{\mathcal{C}}
\newcommand{\mP}{\mathcal{P}}
\newcommand{\mS}{\mathcal{S}}
\newcommand{\mR}{\mathcal{R}}
\newcommand{\mN}{\mathcal{N}}
\newcommand{\mG}{\mathcal{G}}

\DeclareMathOperator*{\im}{im}

\DeclareMathOperator*{\rank}{rank}

\newcommand{\rev}{\ce{<=>[]}}

\renewcommand{\k}{{\kappa}}
\newcommand{\st}{\mid}

\newtheorem{lemma}{Lemma}
\newtheorem{proposition}{Proposition}
\newtheorem{theorem}{Theorem}
\newtheorem{corollary}{Corollary}

\theoremstyle{definition}
\newtheorem{definition}{Definition}
\newtheorem{example}{Example}


\newcommand\red[1]{\textcolor{red}{#1}}

\begin{document}

\title{Node Balanced Steady States: Unifying and Generalizing Complex and Detailed Balanced Steady States}

\author{Elisenda Feliu$^{1,3}$, Daniele Cappelletti$^2$, Carsten Wiuf$^1$}
\date{\today}

\footnotetext[1]{Department of Mathematical Sciences, University of Copenhagen, Universitetsparken 5, 2100 Copenhagen, Denmark.}
\footnotetext[2]{Department of Mathematics, University of Wisconsin-Madison, Van Vleck Hall 480 Lincoln Drive, Madison, Wi 53706, USA.}
\footnotetext[3]{Corresponding author: efeliu@math.ku.dk}

 \tikzset{every node/.style={auto}}
 \tikzset{every state/.style={rectangle, minimum size=0pt, draw=none, font=\normalsize}}
 \tikzset{bend angle=15}

\maketitle

  \begin{abstract}
We introduce a unifying and generalizing framework for complex and detailed balanced steady states in chemical reaction network theory. To this end, we generalize the  graph commonly used to represent a reaction network. Specifically, we introduce a graph, called a \emph{reaction graph}, that has one edge for each reaction but potentially multiple nodes for each  complex. A special class of steady states, called \emph{node balanced steady states}, is naturally associated with such a reaction graph. We show that complex and detailed balanced steady states are special cases of node balanced steady states by choosing appropriate reaction graphs. Further, we show that node balanced steady states have properties analogous to complex balanced steady states, such as  uniqueness and asymptotical stability in each stoichiometric compatibility class.
Moreover, we associate an integer, called the \emph{deficiency}, to a reaction graph that gives the number of independent relations in the reaction  rate constants that need to be satisfied for a positive node balanced steady state to exist.

The set of reaction graphs (modulo isomorphism) is equipped with a partial order that has the complex balanced reaction graph as minimal element. We relate this order to the deficiency and to the set of  reaction rate constants for which a positive node balanced steady state exists. 
\end{abstract}


 \section{Introduction}
Complex balanced steady states of a chemical reaction network are perhaps the most well-described class of  steady states in chemical reaction network theory. Horn and Jackson built a theory for positive complex balanced steady states and showed  that they are unique and asymptotically stable relatively to the linear invariant subspace they belong to \cite{hornjackson}.  Around the same time, Feinberg studied 
 a structural network property, 
 called the \emph{deficiency}, and derived  parameter-independent theorems concerning the existence of complex balanced steady states, based on the deficiency \cite{feinberg1972,feinberg-def0}.  
 
The graphical structure of a reaction network also plays an integral part of the present work. In fact we will not stick to a single graphical representation of a reaction network but to a collection of graphical representations, and build a theory that extends the classical theory of complex and detailed balanced steady states. In this theory detailed and complex balanced steady states arise as particular examples of the same phenomenon. 
 
 The standard graphical representation of a reaction network is a digraph where the nodes are the complexes of the network and the directed edges are the reactions, as in the example below with two species. This representation appears so natural that a reaction  network  might be defined directly as a digraph with nodes labeled by linear combinations of the species \cite{Dickenstein:2011p1112} as follows:
\begin{equation}\label{eq:graph1}
\textrm{
\schemestart
$X_1+X_2$  \arrow(1--2){->[{\small $r_1$}]}[-45] $X_1+2X_2$
 \arrow(--3){->[][{\small $r_2$}]}[180]  $X_2$ \arrow(--@1){->[{\small $r_3$}]}[45]  
 \arrow(@2--4){<=>[{\small $r_4$}][{\small $r_5$}]} $2X_1$.
\schemestop
}
\end{equation}
In addition, we also consider digraphs where the same complex in different reactions might or might not be represented by the same node. These graphs, called \emph{reaction graphs} (Definition~\ref{def:reaction-graph}), can be obtained from the standard graph by duplication of nodes.
As an example, consider the digraph 
\begin{equation}\label{eq:graph2}
\textrm{
\schemestart
$X_1+X_2$  \arrow(1--2){->[{\small $r_1$}]}[-45] $X_1+2X_2$
 \arrow(--3){->[][{\small $r_2$}]}[180]  $X_2$ \arrow(--@1){->[{\small $r_3$}]}[45]  
\schemestop
\qquad
\schemestart
 $X_1+2X_2$  \arrow(1--2){<=>[{\small $r_4$}][{\small $r_5$}]} $2X_1$, \schemestop}
\end{equation}
where the node with label $X_1+2X_2$ is duplicated, such that the digraph  \eqref{eq:graph1} is split into two components.
The  digraph \eqref{eq:graph1} is obtained by  collapsing the two nodes for $X_1+2X_2$, that is, by reversing the duplication step. 

We associate a \emph{novel} type of steady states, called 
\emph{node balanced steady states} with a given reaction graph (Definition~\ref{def:node-balanced}). To set the idea, recall that a complex balanced steady state is an equilibrium point such that, for any complex $y$, the sum of the reaction flows out of $y$ equals the sum of the reaction flows going into $y$. To illustrate this, consider the digraph \eqref{eq:graph1} with mass-action kinetics. A complex balanced steady state  $x=(x_1,x_2)$ fulfills
\begin{equation}\label{eq:intro1} \k_1 x_1x_2=\k_3 x_2,\qquad \k_3 x_2= \k_2x_1x_2^2,\qquad    \k_2x_1x_2^2+\k_4 x_1x_2^2=\k_1 x_1x_2 +\k_5 x_1^2,
\end{equation}
where   $\k_i$ is the  reaction rate constant of the $i$-th reaction.

Analogously, we define a node balanced steady state with respect to a given reaction graph as a steady state fulfilling 
 equations similar to \eqref{eq:intro1}, derived from the particular reaction graph.
Thus, under mass-action kinetics,  
a node balanced steady state of the  digraph \eqref{eq:graph2} fulfills 
\begin{equation}\label{eq:intro2}  \k_1 x_1x_2=\k_3 x_2,\qquad \k_3 x_2= \k_2x_1x_2^2,\qquad    \k_2x_1x_2^2 =\k_1 x_1x_2,\qquad  \k_4 x_1x_2^2=\k_5 x_1^2.
\end{equation}
The equations in \eqref{eq:intro1} can be obtained by adding the last two equations in \eqref{eq:intro2}. A node balanced steady state of the digraph  \eqref{eq:graph2} is  therefore in particular  complex balanced.

In this context   complex balanced steady states are  instances of node balanced steady states for a particular choice of reaction graph. As we will see, detailed balanced steady states are  also node balanced steady states for a specific reaction graph. 
It is therefore not surprising that node balanced steady states satisfy  properties analogous to complex (and detailed) balanced steady states. In fact, we  show that many classical results carry over to node balanced steady states and might be defined in terms of properties of  reaction graphs, rather than reaction networks.
Particularly, 
if there is one positive node balanced steady state, then all  steady states are node balanced, and   if this is so, there is one positive node balanced steady state in each stoichiometric compatibility class. Furthermore, this steady state is asymptotically stable relatively to the class (Theorem~\ref{thm:complex}). 
Additionally, we give algebraic conditions  on the reaction  rate constants for which node balanced steady states exist with respect to a given reaction graph  (Theorem~\ref{prop:k}).  There are as many algebraic relations as the \emph{deficiency}
of the reaction graph. This also generalizes  known results for complex balanced steady states \cite{Craciun-Sturmfels}.

We define a natural partial order on the set  of reaction graphs. The standard graphical representation of  a reaction network, as in \eqref{eq:graph1}, is the unique minimal element.
Intuitively, 
a reaction graph $G$ is smaller than, or included in,  
another reaction graph $G'$, $G\preceq G'$, if $G$ can be obtained from $G'$ by collapsing some of the nodes of $G'$.  We will show that if $G\preceq G'$, then the deficiency of $G$ is smaller than or equal to that of $G'$ (Proposition~\ref{prop:deficiency}). 
As an example, the reaction graph in \eqref{eq:graph1} is smaller than that of \eqref{eq:graph2}, but their deficiencies  are the same.

Horn and Jackson showed that conditions for complex and detailed balanced steady states 
could be stated in terms of symmetry conditions on the reaction rates \cite{hornjackson}. They also  speculated that perhaps there were other classes of networks for which the steady states fulfilled similar symmetry conditions. 
We show that node balanced steady states might indeed be defined in terms of symmetry conditions, similar to those of Horn and Jackson.

The motivation for this work comes from the desire to build a general unifying framework for complex and detailed balanced steady states. 
However,  as a consequence of our results, we are additionally able to state sufficient conditions on the reaction rate constants of a network such that a part of the network is at steady state whenever the whole system is at  steady state. This is a relevant question in the context of biological modeling, since it is often the case that  only subnetworks of a system are studied. It is therefore natural to wonder whether the small network is at steady state  when the whole network is, and \emph{vice versa}.

The structure of the paper is as follows. In the next section we introduce reaction networks and reaction graphs, together with basic properties of reaction graphs. In Section \ref{sec:node}, we define node balanced steady states and discuss their properties. After that, in Section \ref{sec:HJ} and \ref{sec:part}, we discuss the symmetry conditions of Horn and Jackson, and the relationship between a part and the whole of a reaction network. Finally, in Section \ref{sec:proof}, we provide  proofs of the main theorems on node balanced steady states.

\section{Reaction networks and reaction graphs}

We let $\R^n_{\geq 0}$ and $\R^n_{> 0}$ denote the nonnegative and positive orthants of $\R^n$,  respectively. If $x\in\R^n_{>0}$ ($\R^n_{\ge 0}$), then we say that $x$ is positive (nonnegative). Similarly, $\Z_{\geq0}$ denotes the nonnegative integers. If $v_1,\ldots,v_k\in\R^n$ are vectors, then $\langle v_1,\ldots,v_k\rangle$ denotes the linear subspace generated by the vectors. 

\subsection{Reaction networks}
This section  introduces  reaction networks and  their associated ODE systems \cite{gunawardena-notes,feinbergnotes}. 
A \emph{reaction network} (or  simply a network) is a triplet $\mN= ( \mS, \mC, \mR)$
where  $\mS$, $\mC \subseteq \Z_{\geq 0}^{\mS}$ and 
$\mR\subseteq \mC \times \mC$
are  finite sets, called respectively 
the \emph{species, complex } and \emph{reaction} set.  
We implicitly assume  the sets are  numbered and let
$$\mS = \{X_1,\dots,X_n\}, \quad \mC=\{y_1,\dots,y_m\}, \quad\textrm{and}\quad \mR=\{r_1,\dots,r_p\}, $$
such that  $n,m,$ and $p$ are their respective  cardinalities. 
 Hence $\Z_{\geq 0}^{\mS} \cong \Z_{\geq 0}^n$, and a  complex can be identified with a  linear combination of species
$y=(\lambda_1,\dots,\lambda_n)=\sum_{i=1}^n \lambda_i X_i$. 
We further assume that  $(y,y)\not\in\mR$, any $y\in\mC$ is in at least one reaction and any $S\in\mS$ is in at least one complex. 
In that case, $\mC$ and $\mS$ can be found from $\mR$, and  $\mN$ is said to be \emph{generated} from $\mR$.

An element $r=(y,y')$ of $\mR$ is written  as $r\colon y\rightarrow y'$ or just $y\rightarrow y'$.
A reaction $y\rightarrow y'\in \mR$ is \emph{reversible} if $y'\rightarrow y\in \mR$ as well. If this is not the case, then the reaction is \emph{irreversible}. 
A pair of reactions $y\rightarrow y'$ and $y'\rightarrow y$ is called a reversible reaction pair and denoted by $y\rev y'$. A reaction network is \emph{reversible} if all  reactions of the network are reversible.

The \emph{stoichiometric  matrix} $N\in \R^{n\times p}$ is the matrix with  $j$-th column $N_j=y'-y$ where $r_j\colon y\rightarrow y'$.
The columns of $N$ generate the so-called \emph{stoichiometric subspace} $S$ of $\R^n$. We define  $s=\rank(N)=\dim(S)$.

We let $x_i$ denote the concentration of species $X_i$. 
A kinetics $v$ for a reaction network is a $\mC^1$-function from $\R^n_{\geq 0}$ to $\R^p_{\geq 0}$ such that $v(\R^n_{>0})\subseteq \R^p_{>0}$.
The $j$-th coordinate, $v_j$, is called the \emph{rate function} of $r_j$.
The main example of kinetics is \emph{mass-action kinetics}, where
\begin{equation}\label{eq:mass-action}
v_{j}(x)= \k_j x^{y} = \k_j \prod_{i=1}^n x_i^{\lambda_i}, \qquad \textrm{for }\quad r_j\colon y\rightarrow y',\quad \text{and}\quad y=(\lambda_1,\ldots,\lambda_n),
\end{equation}
and  $\k_j>0$ denotes the \emph{reaction rate constant} of $r_j$. By convention, $0^0=1$.   Whenever the numbering of $\mR$ is irrelevant, we write $v_{y\rightarrow y'}$ and $\k_{y\rightarrow y'}$, instead of $v_j$ and $\k_j$, respectively.

Given a kinetics $v$, the evolution of the species concentrations over time is modeled by a system of ODEs,
\begin{equation}\label{eq:ode} \frac{dx}{dt} = Nv(x),\qquad x\in \R^n_{\geq 0}.
\end{equation}
For reasonable kinetics, including mass-action kinetics, the solution of \eqref{eq:ode}  is in the positive (nonnegative) orthant for all positive times in the interval of definition, if the initial condition is \cite{Sontag:2001}.
Furthermore, the solution is confined to one of the 
 nonnegative polytopes known as the \emph{stoichiometric compatibility classes}
$$ \mP_{x_0}= (x_0+S) \cap \R^n_{\geq 0},$$
where $x_0\in\R_{\ge 0}$ is the initial condition.

The \emph{steady states} of \eqref{eq:ode} are  the nonnegative solutions to the equation
$Nv(x)=0.$
For mass-action kinetics, this equation becomes:
\begin{equation}\label{eq:steady-states}
\sum_{r_j\colon y\rightarrow y'}  \k_j x^{y} (y'-y) =0,\qquad x\in \R^n_{\geq 0}.
\end{equation}

\refstepcounter{example}\label{ref:exampleHorn}
\newcounter{mycounterHorn}
\renewcommand{\themycounterHorn}{\getrefnumber{ref:exampleHorn}\,(part\,\Alph{mycounterHorn})}
\newtheorem{myexampleHorn}[mycounterHorn]{Example}

\begin{myexampleHorn}\label{ex:horn}
We use a variant of a reaction network in  \cite[equation (7.3)]{hornjackson} and \cite[Example 2.3]{Feinberg:1989vg} as a `running' example throughout the paper.
The set $\mR$ of reactions  consists of
\begin{align*}
r_1\colon && 3X_1 & \rightarrow X_1+2X_2 &  r_2 \colon &&  X_1+2X_2 & \rightarrow 3X_2 & r_3\colon &&   3X_2 & \rightarrow 2X_1+X_2 \\
r_4\colon &&  2X_1+X_2 & \rightarrow 3X_1  & r_5\colon &&  3X_1 & \rightarrow 3X_2 &   r_6\colon && 3X_2 & \rightarrow 3X_1,
\end{align*}
with $\mC=\{3X_1,X_1+2X_2,3X_2,2X_1+X_2\}$ and $\mS=\{X_1,X_2\}$.
There is one reversible reaction pair, $r_5$ and $r_6$, 
 and four irreversible reactions.
The stoichiometric matrix  is 
$$ N=\left(\begin{array}{rrrrrr}
-2 & -1 & 2 & -1 & -3 & 3 \\ 
2 & 1 & -2 & 1 & 3 & -3
\end{array}\right),$$
with rank $s=1$. Thus, the stoichiometric compatibility classes $(x_0+\langle(-1,1)\rangle) \cap \R^2_{\geq 0}$ are one-dimensional.
Mass-action kinetics implies  $v(x)= ( 
\k_1 x_1^3, \k_2 x_1x_2^2,   \k_3 x_2^3, \k_4 x_1^2x_2, \k_5 x_1^3, \k_6 x_2^3)$.
\end{myexampleHorn}

\subsection{Reaction graphs}
In this subsection we introduce novel graphical representations of a reaction network, the main object of this work. 
 
\begin{definition}\label{def:reaction-graph}
Let $\mN= ( \mS, \mC, \mR)$  be a reaction network. 
A \emph{reaction graph} associated with $\mN$ is a node labeled digraph $G=(V_G,E_G,Y)$  with no isolated nodes, where 
$V_G=\{1,\dots,m_G\}$ and
\begin{itemize}
\item[(i)] $Y$ is a surjective labeling  of the  node set with values in $\mC$:
$$ V_G \xrightarrow{Y} \mC,\qquad\textrm{such that}\qquad  i \mapsto Y_i,$$
\item[(ii)] $Y$ induces a bijection $R$ between $E_G$ and $\mR$ as follows:
$$E_G \xrightarrow{R} \mR,\qquad\textrm{such that}\qquad  i\rightarrow j  \quad \mapsto \quad R_{i\rightarrow j} 
= Y_i \rightarrow Y_j. $$
\end{itemize}
We say that a reaction graph is \emph{weakly reversible} if all connected components of $G$ are strongly connected, that is, there exists a directed path between two nodes, whenever there is a directed path between the nodes in opposite direction.
\end{definition}

In what follows, we generically denote a complex of a network as $y_i$ and a label of a node in a reaction graph  as $Y_i$. For a reaction graph $G'$, we denote objects related to it  with $'$.
Note that two nodes can have the same label in $\mC$, but  each reaction of $\mN$ corresponds to exactly one edge of $G$.
For two reaction graphs  $G,G'$  of $\mN$, the bijections $R, R'$ induce a correspondence $R^{-1}\circ R'$ between  $E_{G'}$ and $E_G$:
An edge $i'\rightarrow j'$ of $G'$ \emph{corresponds} to the edge $i\rightarrow j$ of $G$, if 
the two edges  map  to the same reaction of $\mN$.

Any permutation of the node set $V_G$ of a reaction graph gives an identical reaction graph, except for the numbering of the nodes. In the Introduction, `reaction graph' was used in the sense of `up to a numbering of the nodes' without mentioning it explicitly.

\begin{figure}[!t]
\begin{center}
\includegraphics[scale=0.8]{./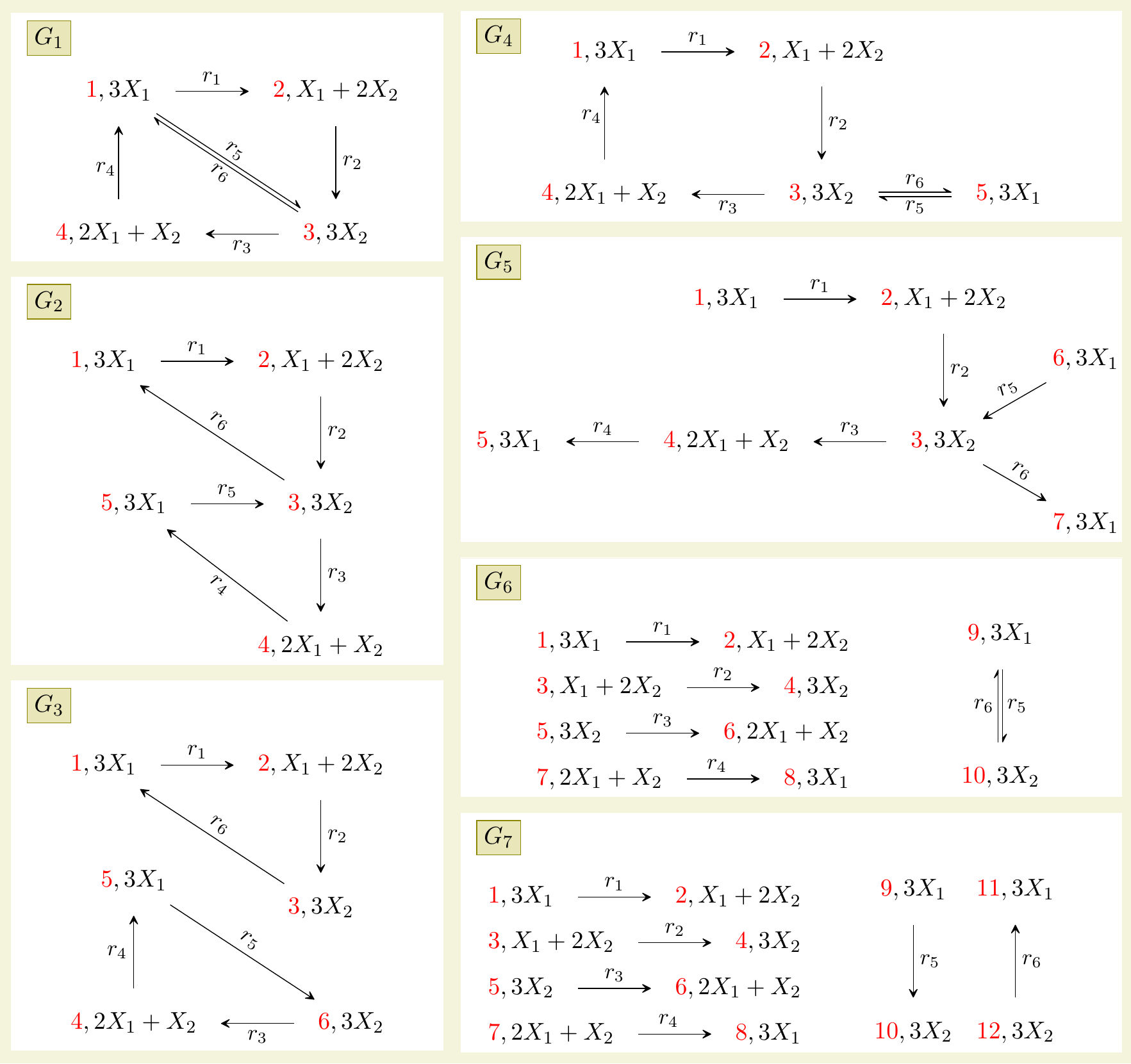}
\end{center}
\caption{Reaction graphs for the network in Example~\ref{ex:horn}. Reaction labels $r_j$ are added for convenience. } \label{fig:horn}
\end{figure}

\begin{myexampleHorn}\label{ex:horngraphs}
The following digraph and  labeling function
\begin{center}
\begin{minipage}[h]{0.45\textwidth}
\begin{center}
\schemestart
$1$  \arrow(1--2){->} $2$
 \arrow(--3){->}[270] $3$ \arrow(--4){->}[180] $4$
 \arrow(@4--@1){->} 
 \arrow(@1--@3){<=>}
\schemestop
\end{center}
\end{minipage}
\begin{minipage}[h]{0.45\textwidth}
\begin{align*}
 Y_1 & = 3X_1, & Y_2 &  = X_1+2X_2, \\  Y_3 & = 3X_2, &  Y_4 & = 2X_1+X_2,
\end{align*}
\end{minipage}
\end{center}
define a reaction graph for the   reaction network in Example~\ref{ex:horn}. This reaction graph is weakly reversible. In general we draw the node labels next to the nodes and add  labels ($r_j$) to the edges, as in the  digraph  $G_1$ of Figure~\ref{fig:horn}.
The edge labels are redundant information,
 because the bijection $R$ of a reaction graph  $(V_G,E_G,Y)$ is explicitly determined.
 However, it makes  comparison between reaction graphs easier.

Figure~\ref{fig:horn} shows seven reaction graphs for the   reaction network in Example~\ref{ex:horn} that will be used  to illustrate results.
The reaction graphs except for $G_5$, $G_6$ and $G_7$ are weakly reversible.  
 \end{myexampleHorn}

The following definition names some special reaction graphs. Recall that we assume that numberings of the reaction and complex sets are given.

\begin{definition}
Let $\mN= ( \mS, \mC, \mR)$  be a reaction network.
\begin{itemize}
\item  A \emph{complex reaction graph} $G$  is   a reaction graph with $m$ nodes such that the labeling $Y$ is a bijection between $V_G$ and $\mC$.  
The \emph{canonical  complex reaction graph}  fulfills
$$Y_j= y_j,\qquad j=1,\dots,m.$$
\item 
A \emph{detailed reaction graph} is  a reaction graph with one connected component per  reversible reaction pair and one per  irreversible reaction.
\item A \emph{split reaction graph}  is   a reaction graph with $2p$ nodes and one connected component per reaction.
The \emph{canonical split reaction graph} 
fulfills
 $$ Y_{2j-1} =y,\qquad     Y_{2j} = y',\qquad \textrm{if}\quad r_j \colon y\rightarrow y',\quad  \textrm{for all}\ j=1,\dots,p.$$
\item We say that \emph{$\mN$ is weakly reversible} if any complex reaction graph is weakly reversible.
\end{itemize} 
\end{definition}

\begin{myexampleHorn}
Consider  
the reaction graphs in Figure~\ref{fig:horn}. The reaction graph $G_1$ is a complex reaction graph, $G_6$ is a detailed reaction graph  and $G_7$ is a split reaction graph (in fact, the canonical split reaction graph). The network is weakly reversible.
\end{myexampleHorn}

 Intuitively, any reaction graph $G$ is obtained by collapsing or joining nodes of a split reaction graph with the same labels.  Oppositely, we can view a reaction graph as a graph where some  nodes of a complex reaction graph  are duplicated.
 Only nodes that are source or target nodes of multiple edges can be duplicated.
 For example, if node 1 of the reaction graph $G_1$ in Figure~\ref{fig:horn} is duplicated, then 
 the reaction graphs $G_4$ and $G_2$ are obtained. Collapsing the node pairs $(9,11)$, and $(10,12)$ of the split reaction graph $G_7$ yields $G_6$.  
  Node duplication  does not determine the reaction graph uniquely. 
  In contrast, by collapsing  node pairs 
  the reaction graph is uniquely determined. This is formalized in the next subsection.

\subsection{Morphisms of reaction graphs and a partial order} 

In this subsection we consider 
the family of reaction graphs associated with a reaction network $\mN$ and show that   the set of equivalence classes of reaction graphs forms a lattice.

\begin{definition}\label{def:morphismgraph}
Let $G,G'$ be two reaction graphs associated with a reaction network $\mN$ and let $Y,Y'$ be their labeling functions, respectively.
A \emph{morphism of reaction graphs} from $G$ to $G'$  is a map between the node sets 
$$ \varphi\colon V_{G} \rightarrow V_{G'},$$
such that 
\begin{enumerate}[(i)]
\item $\varphi(i)\rightarrow \varphi(j)$ is an edge of $G'$ if $i\rightarrow j$ is an edge of $G$.
\item $Y'_{\varphi(i)} = Y_{i}$ for all $i = 1,\dots,m_{G}$ (equivalently, $Y=Y'\circ \varphi$). 
\end{enumerate}
An \emph{isomorphism} of reaction graphs is a morphism that has an inverse morphism. It  is simply a permutation of the nodes of the graph. 
\end{definition}

By definition, a morphism of reaction graphs is in particular a digraph morphism. Isomorphism of reaction graphs is an equivalence relation, which allows us to speak about the \emph{equivalence class} $[G]$ of a reaction graph $G$. In this sense, two reaction graphs are \emph{equivalent} if they are isomorphic. The  complex reaction graphs form an equivalence class $\mathcal{G}_c$ and the split reaction graphs form another class $\mathcal{G}_{sp}$, with representatives given by  the  canonical reaction graphs.
 In the same way, the detailed reaction graphs  also form an equivalence class. 
An equivalence class of reaction graphs can be depicted by omitting the numbering of the node set, as we did  in \eqref{eq:graph1} and \eqref{eq:graph2} in the Introduction. We will use this representation in some examples to introduce  an equivalence class without specifying a representative.

 \begin{lemma}\label{lem:surjective}
  Any morphism of reaction graphs $\varphi\colon V_G\to V_{G'}$ is surjective.
 \end{lemma}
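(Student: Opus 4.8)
The plan is to exploit the fact that every node of a reaction graph is incident to at least one edge, combined with the bijection $R$ between edges and the reaction set $\mR$, to force $\varphi$ to hit every node of $G'$. Let me set up the argument. Fix a node $j' \in V_{G'}$. Since $G'$ has no isolated nodes, there is an edge $e'$ of $G'$ incident to $j'$, say $e' \colon j' \rightarrow k'$ or $e' \colon k' \rightarrow j'$. The key observation is that $R'(e')$ is some reaction of $\mN$, and since $R \colon E_G \to \mR$ is a bijection, there is a unique edge $e$ of $G$ with $R(e) = R'(e')$.

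The main step is then to show that $\varphi$ carries $e$ to $e'$. Write $e \colon i \rightarrow l$ in $G$. By condition (i) of Definition~\ref{def:morphismgraph}, $\varphi(i) \rightarrow \varphi(l)$ is an edge of $G'$; call it $e''$. Now I compute $R'(e'')$: by condition (ii), $Y'_{\varphi(i)} = Y_i$ and $Y'_{\varphi(l)} = Y_l$, so $R'(e'') = Y'_{\varphi(i)} \rightarrow Y'_{\varphi(l)} = Y_i \rightarrow Y_l = R(e) = R'(e')$. Since $R'$ is a bijection, $e'' = e'$, i.e., $\varphi(i) \rightarrow \varphi(l)$ coincides with the edge $j' \rightarrow k'$ (or $k' \rightarrow j'$) of $G'$. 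In particular $j' \in \{\varphi(i), \varphi(l)\} \subseteq \im(\varphi)$. Since $j'$ was arbitrary, $\varphi$ is surjective.

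I expect the argument to be essentially routine; the only place requiring a little care is handling the two possible orientations of the edge incident to $j'$ uniformly, but since the computation $R'(\varphi(i) \rightarrow \varphi(l)) = R(i \rightarrow l)$ pins down the entire edge (both endpoints and orientation), a single application covers both cases: whichever endpoint of $e$ maps to $j'$, we conclude $j' \in \im(\varphi)$. No lattice or partial-order machinery from the rest of the subsection is needed — this lemma is the foundational fact from which uniqueness of the collapsing map and well-definedness of the partial order will later be derived.
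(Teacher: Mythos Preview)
Your proof is correct and follows essentially the same approach as the paper: both establish that $R'(\varphi(i)\rightarrow\varphi(l)) = R(i\rightarrow l)$, so that $\varphi$ acts on edges as the bijection $(R')^{-1}\circ R$, and then use the absence of isolated nodes in $G'$ to conclude surjectivity. The paper phrases this globally while you argue node-by-node, but the underlying computation is identical.
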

\begin{proof}
 Definition~\ref{def:morphismgraph} implies that 
$R_{i\rightarrow j} = Y_i\rightarrow Y_j = Y'_{\varphi(i)} \rightarrow Y'_{\varphi(j)}  = R'_{\varphi(i)\rightarrow \varphi(j)}$.
Therefore, $(R')^{-1}\circ R (i\rightarrow j)= \varphi(i)\rightarrow \varphi(j)$. Since $(R')^{-1}\circ R$ is a bijection between $E_G$ and $E_{G'}$ and  reaction graphs have no isolated nodes,   all nodes of $G'$ are in the image of $\varphi$, that is, $\varphi$ is surjective.
\end{proof}
 
Let $G_{sp}=(V_{sp},E_{sp},Y_{sp})$ denote  the  canonical split reaction graph. 

\begin{definition}\label{def:partition}
A partition $\mathcal{P}$ of $V_{G_{sp}}=\{1,\dots,2p\}$ 
is called \emph{admissible} if 
$$Y_{sp,i}=Y_{sp,j} \quad\text{ for all }i,j \text{ in the same subset }P\in \mathcal{P}.$$ 
Given an admissible partition $\mP= \{P_1,\dots,P_q\}$, the associated reaction graph $G_\mP=(V_{\mP} ,E_{\mP} ,Y_\mP)$  is defined as
\begin{align*}
V_{\mP} & = \{1,\dots,q\}, \qquad (\textrm{so, }m_{G_\mP}=q),\\
E_{\mP} &= \{ k \rightarrow k' \st   i\rightarrow j \in E_{G_{sp}},\, i \in P_k, j\in P_{k'} \}, \\
(Y_\mP)_k &= Y_{sp,i} \qquad \textrm{ if}\qquad i\in P_k.
\end{align*}
\end{definition}
\smallskip
Note that the associated reaction graph depends on the numbering of the sets in the partition, which we implicitly give when writing $\mP= \{P_1,\dots,P_q\}$.
The map $Y_\mP$ is well defined because the partition is admissible. Moreover, $E_{\mP}$ is in one-to-one correspondence with $\mR$ because an edge $k\rightarrow k'$  arises from a unique choice of $i,j$. Otherwise,  there would be two edges in $G_{sp}$ corresponding to the same reaction, since the partition is admissible. 

\begin{lemma}\label{lem:equiv}
\begin{enumerate}[(i)]
\item
For any reaction graph $G$ there exists an admissible partition $\mP=\{P_1,\ldots,P_q\}$ with $q=m_G$ elements,  such that   $G=G_\mP$.
\item
Two reaction graphs $G,G'$ with respective admissible partitions $\mP=\{P_1,\ldots,P_q\}$, $\mP'=\{P_1',\ldots,P_{q'}'\}$, as in (i), are equivalent if and only if 
$\mP=\mP'$. 
\end{enumerate}
\end{lemma}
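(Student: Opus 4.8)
The plan is to construct the admissible partition directly from the combinatorics relating an arbitrary reaction graph $G$ to the canonical split reaction graph $G_{sp}$, and then to read off the equivalence criterion. For part (i), I would first invoke Lemma~\ref{lem:surjective}: since $G_{sp}$ is a reaction graph associated with $\mN$, there is no a priori morphism $G_{sp}\to G$ yet, so instead I would work through the bijections $R_{sp}\colon E_{G_{sp}}\to\mR$ and $R\colon E_G\to\mR$. The key observation is that $E_{G_{sp}}$ consists of $p$ disjoint edges $2j-1\to 2j$, one per reaction $r_j\colon y\to y'$, with $(Y_{sp})_{2j-1}=y$ and $(Y_{sp})_{2j}=y'$. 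Define a map $\psi\colon V_{G_{sp}}\to V_G$ by sending the two endpoints of the edge of $G_{sp}$ corresponding to $r_j$ to the two endpoints of the edge $R^{-1}(r_j)$ of $G$, in the order source-to-source and target-to-target. I would check this is well defined: a node $i\in V_{G_{sp}}$ may be an endpoint of only one edge (since the components of $G_{sp}$ are single edges), so there is no conflict. Because $G$ has no isolated nodes, every node of $G$ is an endpoint of some edge, hence lies in the image of $\psi$, so $\psi$ is surjective. Then $\mP\colonequiv\{\psi^{-1}(k)\st k\in V_G\}$ is a partition of $V_{G_{sp}}$ into $q=m_G$ nonempty blocks; admissibility holds because if $i,j\in\psi^{-1}(k)$ then $(Y_{sp})_i=(Y_G)_{\psi(i)}=(Y_G)_k=(Y_G)_{\psi(j)}=(Y_{sp})_j$, using property (ii) of the reaction-graph axioms applied to both $G$ and $G_{sp}$ (both label their edge-endpoints by the source/target of the same reaction). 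Finally I would verify $G=G_\mP$ on the nose: after identifying $V_\mP=\{1,\dots,q\}$ with $V_G$ via the block-indexing $P_k\leftrightarrow k$, the edge set $E_\mP$ is exactly the image under $\psi$ of $E_{G_{sp}}$ edge-by-edge, which is $E_G$ since $R^{-1}\circ R_{sp}$ is a bijection onto $E_G$; and $(Y_\mP)_k=(Y_{sp})_i=(Y_G)_k$ for any $i\in P_k$.

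For part (ii), the forward direction is the cleaner one. If $\mP=\mP'$ then $G_\mP$ and $G_{\mP'}$ have the same node set (up to the numbering of blocks, which is exactly the freedom allowed in choosing representatives) and the same edge set and labeling, so the identity on blocks—or rather, the bijection matching $P_k$ to $P'_{\sigma(k)}$ whenever these are the same subset—is an isomorphism of reaction graphs. For the converse, suppose $G\cong G'$ via an isomorphism $\varphi\colon V_G\to V_{G'}$, which by Definition~\ref{def:morphismgraph} is a label-preserving digraph isomorphism. I would compare the two maps $\psi\colon V_{G_{sp}}\to V_G$ and $\psi'\colon V_{G_{sp}}\to V_{G'}$ built in part (i). The point is that these maps are \emph{canonically determined}: $\psi$ sends the source (resp.\ target) endpoint of the $G_{sp}$-edge for $r_j$ to the source (resp.\ target) endpoint of the $G$-edge for $r_j$, and likewise for $\psi'$. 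Since $\varphi$ carries the $G$-edge for $r_j$ to the $G'$-edge for $r_j$ respecting source/target (as $\varphi(i)\to\varphi(j)$ is the edge of $G'$ corresponding under $R^{-1}\circ R'$ to $i\to j$, by the computation in the proof of Lemma~\ref{lem:surjective}), we get $\psi'=\varphi\circ\psi$. Therefore $\psi$ and $\psi'$ have exactly the same fibers: $(\psi')^{-1}(\varphi(k))=\psi^{-1}(k)$, so $\mP'=\{(\psi')^{-1}(k')\}=\{\psi^{-1}(k)\}=\mP$ as unordered partitions. The numbering may differ, but (i) only asserts existence of a partition with $q=m_G$ blocks, and $\mP,\mP'$ are asserted equal as partitions (sets of subsets), which is what we have shown.

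The main obstacle, and the step I would be most careful about, is bookkeeping the distinction between a reaction graph as an honest labeled digraph on $\{1,\dots,m_G\}$ and its equivalence class under renumbering, because part (i) claims a literal equality $G=G_\mP$ while part (ii) is about equivalence classes; I need the construction $\psi$ in (i) to respect whatever numbering $G$ came with (which it does, by defining $\mP$'s blocks as the fibers and then indexing block $P_k$ by the value $k\in V_G$), and then in (ii) I must make peace with the fact that $\mP$ from $G$ and $\mP'$ from $G'$ are partitions whose \emph{block numberings} are dictated by the respective numberings of $V_G$ and $V_{G'}$, so the honest statement is equality of the underlying set partitions. A secondary subtlety is confirming that an isomorphism of reaction graphs, which Definition~\ref{def:morphismgraph} only requires to send edges to edges, automatically sends the edge for $r_j$ to the edge for $r_j$ (not some other reaction); this is immediate from $Y=Y'\circ\varphi$ together with the edge-to-reaction bijections, and is essentially the identity $R_{i\to j}=R'_{\varphi(i)\to\varphi(j)}$ already recorded in the proof of Lemma~\ref{lem:surjective}, so I would just cite that line. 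The rest is routine verification of the three defining data (node set, edge set, label map) of $G_\mP$.
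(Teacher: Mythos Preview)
Your proposal is correct and follows essentially the same approach as the paper: both construct the partition by assigning each node of $G_{sp}$ to the block indexed by the corresponding endpoint in $G$ (you phrase this as the fibers of a map $\psi$, the paper builds the blocks $P_k$ directly), and both read off (ii) from the fact that an isomorphism $\varphi$ permutes these blocks. Your treatment is more explicit than the paper's---in particular your careful handling of the numbered-versus-unordered partition issue and the verification that $\psi'=\varphi\circ\psi$---but the underlying argument is the same.
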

\begin{proof} 
(i) We construct the partition $\mP$, such that there is one set $P_k$ for each node $k\in V_G$. 
For each edge $i'\rightarrow j'$ of $G_{sp}$, 
let $i\rightarrow j$ be the edge of $G$ corresponding to the same reaction. 
Then  by definition $i'\in P_i$ and $j'\in P_j$.  Each node of $G_{sp}$ is  the source or target of exactly  one edge.  Therefore, all nodes are assigned a unique set $P_k$, and each set $P_k$ has at least one element.  Thus $P_1,\dots,P_{m_G}$ is a numbered partition of  
$\{1,\ldots,2p\}$, which  further is admissible.  It is straightforward to check that $G=G_\mP$.

(ii) Consider admissible partitions $\mP,\mP'$ and numberings of the subsets of the partitions such that $G=G_\mP$ and $G'=G_{\mP'}$. The isomorphism between $G$ and $G'$ translates into a permutation of the numbering of the subsets of the partitions, and thus $\mP=\mP'$.
\end{proof}

We conclude from Lemma~\ref{lem:equiv} that an equivalence class of reaction graphs can be identified with an  admissible partition $\mP$ of the set $\{1,\dots,2p\}$. This class is denoted by $\mathcal{G}_\mP$.
 Using  this, we can define a partial order on the set of  equivalence classes.

\begin{definition}
Let
$\mP$ and $\mP'$ be two admissible partitions. We say that $\mP$ is a \emph{refinement} of $\mP'$, and write $\mP\leq \mP'$, if for each $P\in \mP$
there exists $P'\in \mP'$  such that $P\subseteq P'$.

Given two equivalence classes  
$\mathcal{G}_\mP, \mathcal{G}_{\mP'}$, we define $\mathcal{G}_{\mP'}\preceq\mathcal{G}_{\mP}$ if  $\mP\leq \mP'$. 
\end{definition}

If $[G']\preceq [G]$, then we  write $G'\preceq G$ for convenience. We say that $G'$ is included in  $G$, or that $G$ includes $G'$.
Note that the  objects are reversed in $[G']\preceq [G]$ and $\mP\leq \mP'$.

 The admissible partition for 
 $\mG_{sp}$ has $2p$ subset each with one element: $ \{i\}$, for $i=1,\dots,2p$. 
 The admissible partition $\mP_c$ defining $\mG_c$ has $m$ elements, one for each complex:
$$ P_{c,y}= \big\{ j\in \{1,\ldots,2p\} 
 \st Y_{sp,j} = y   \big\},\qquad y\in \mC.$$ 
Any admissible partition is a refinement  of $\mP_c$, and hence an admissible partition is a union of partitions, one for each subset of $\mP_c$.  Further, the partition for the split reaction graphs is a refinement of 
 any other admissible partition. Hence,  for any reaction graph $G$, it holds
 \begin{equation}
  \label{eq:complex-small}
 \mG_c\preceq [G] \preceq \mG_{sp}.
\end{equation}

\medskip
The set of admissible partitions inherits a lattice structure from the lattice structure of the set of partitions in general. Recall that a lattice is a set  with a partial order such that any pair of elements has an infimum and a supremum    \cite{graetzer}. An admissible partition $\mP$ defines an equivalence relation over $\{1,\dots,2p\}$ by letting $i\sim_\mP j$  if $i,j\in P$ for some $P\in \mathcal{P}$. 
 The \emph{union} $\mP\vee \mP'$ is the partition such that $i\sim_{\mP\vee\mP} j$ if and only if  
 there exists a sequence $i=i_0,i_1,\dots,i_{k-1},i_k=j$ such that either 
 $i_{\ell-1} \sim_\mP i_{\ell}$ or  $i_{\ell-1} \sim_{\mP'} i_{\ell}$ for all $\ell=1,\dots,k$. Similarly, the \emph{intersection}  $\mP\wedge \mP'$ is the  partition such that $i\sim_{\mP\wedge \mP'} j$ if and only if  $i \sim_\mP j$ and $i \sim_{\mP'} j$. It is straightforward to show that $\mP\vee\mP'$ and $\mP\wedge \mP'$ are both  admissible.
 
Further, if $\mP\leq \mP'$, then there exists a (possibly non-unique) sequence of admissible partitions $\mP= \mP_0\leq\mP_1 \leq \dots  \leq \mP_k = \mP'$, such that at each step precisely two subsets of the partition are joined (cf. \cite[Lemma 1 and Lemma 403]{graetzer}). 
 This constitutes the proof of the following proposition.

\begin{proposition}\label{prop:onebyone}
\begin{enumerate}[(i)]
\item 
The set of equivalence classes of reaction graphs with the partial order $\preceq $   is a finite lattice with maximal element $\mG_{sp}$ and minimal element $\mG_c$. Further, the infimum and supremum of two classes $\mG_\mP,\mG_{\mP'}$ are respectively
\[\mG_\mP \wedge \mG_{\mP'} = \mG_{\mP\vee \mP'},\qquad \mG_\mP \vee \mG_{\mP'} = \mG_{\mP\wedge \mP'}.\]
\item Let $G,G'$ be reaction graphs associated with a reaction network.
Assume $k= m_{G} - m_{G'} > 0$. 
Then 
 $G'\preceq G$
if and only if there exists a sequence of $k-1$ reaction graphs $G'= G_0\preceq G_1 \preceq \dots \preceq G_{k-1}\preceq G_k = G$ such that
$$ m_{G_i} = m_{G_{i-1}} + 1,\qquad i=1,\dots,k.$$
\end{enumerate}
\end{proposition}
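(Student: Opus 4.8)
The plan is to transport both claims to the language of admissible partitions of $\{1,\dots,2p\}$ via Lemma~\ref{lem:equiv}, where they reduce to well-known facts about the refinement lattice of set partitions; the only subtlety is that the identification reverses the order. I would begin by recording what Lemma~\ref{lem:equiv} gives us: the assignment $\mP\mapsto\mathcal{G}_\mP$ is a bijection from the set of admissible partitions of $\{1,\dots,2p\}$ onto the set of equivalence classes of reaction graphs, and by the very definition of $\preceq$ it reverses order, i.e. $\mathcal{G}_{\mP'}\preceq\mathcal{G}_{\mP}$ precisely when $\mP\leq\mP'$ in the refinement order.

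For part (i): the set of all partitions of the finite set $\{1,\dots,2p\}$ is a finite lattice under refinement $\leq$, with meet $\wedge$ and join $\vee$ as recalled just before the statement (cf.\ \cite{graetzer}). Since $\mP\vee\mP'$ and $\mP\wedge\mP'$ are again admissible whenever $\mP,\mP'$ are (the straightforward verification indicated in the text), the admissible partitions form a sublattice, hence a lattice in their own right. An order-reversing bijection between posets is a poset anti-isomorphism, so it carries a lattice to a lattice while interchanging infima and suprema; applying this to $\mP\mapsto\mathcal{G}_\mP$ shows that $(\{\mathcal{G}_\mP\},\preceq)$ is a finite lattice with $\mathcal{G}_\mP\wedge\mathcal{G}_{\mP'}=\mathcal{G}_{\mP\vee\mP'}$ (an infimum for $\preceq$ is a supremum for $\leq$) and $\mathcal{G}_\mP\vee\mathcal{G}_{\mP'}=\mathcal{G}_{\mP\wedge\mP'}$. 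Finally \eqref{eq:complex-small} states $\mathcal{G}_c\preceq[G]\preceq\mathcal{G}_{sp}$ for every reaction graph $G$, which identifies $\mathcal{G}_{sp}$ as the maximum and $\mathcal{G}_c$ as the minimum.

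For part (ii): the implication ``$\Leftarrow$'' is immediate because $\preceq$ is a partial order, hence transitive, and each $G_i$ is a reaction graph of the network. For ``$\Rightarrow$'', write $G=G_{\mP}$ and $G'=G_{\mP'}$ with $\mP,\mP'$ admissible (Lemma~\ref{lem:equiv}); then $G'\preceq G$ translates into $\mP\leq\mP'$, so each block of $\mP'$ is a union of blocks of $\mP$, all carrying the same complex label since $\mP'$ is admissible, and $k=m_G-m_{G'}$ equals the total excess in the number of blocks. I would then build a chain $\mP=\mP_0\leq\mP_1\leq\dots\leq\mP_k=\mP'$ of admissible partitions in which $\mP_j$ is obtained from $\mP_{j-1}$ by merging two blocks that lie inside a common block of $\mP'$; each such merge keeps the partition admissible (the merged block still carries a single label) and decreases the number of blocks by exactly one, so $|\mP_j|=|\mP_{j-1}|-1$ and the chain reaches $\mP'$ after exactly $k$ steps. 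Setting $G_i:=G_{\mP_{k-i}}$ yields reaction graphs with $G_0=G'$, $G_k=G$, $G_i\preceq G_{i+1}$ (because $\mP_{k-i-1}\leq\mP_{k-i}$), and $m_{G_i}=|\mP_{k-i}|=m_{G_{i-1}}+1$, which is the claim; this is exactly the constructive form of the refinement-chain fact quoted before the statement.

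I do not expect a genuine obstacle here: the argument is essentially bookkeeping, and the nontrivial combinatorial input (the partition lattice and the existence of refinement chains that change one block at a time) is classical and already cited. The two points that need care are keeping the order reversal straight in (i) — remembering that an infimum for $\preceq$ is the partition \emph{join} and vice versa — and checking in (ii) that the refinement chain can be realized through \emph{admissible} partitions, which is precisely the observation that any two sub-blocks of a single block of $\mP'$ share a complex label, so merging them is harmless.
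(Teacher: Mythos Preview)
Your proposal is correct and follows essentially the same approach as the paper: both transport the statement to admissible partitions via Lemma~\ref{lem:equiv}, use that admissible partitions form a sublattice of the refinement lattice (with the order-reversing bijection swapping meets and joins), and for (ii) invoke the standard chain of partitions obtained by merging two blocks at a time. You supply somewhat more detail than the paper—particularly the explicit check that intermediate merges stay admissible and the careful reindexing $G_i=G_{\mP_{k-i}}$—but there is no substantive difference in method.
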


Note that given two weakly reversible reaction graphs $G,G'$, their infimum $G\wedge G'$ is also  weakly reversible (cf. Proposition~\ref{prop:incidence}(iv)), but this is not necessarily the case for the supremum $G\vee G'$ as the next example will show.

\begin{myexampleHorn}
Table~\ref{fig:partition} shows the  admissible partitions for the reaction graphs in Figure~\ref{fig:horn}. Since $\mP_5$ is a refinement of $\mP_4$, we have 
 $[G_4]\preceq [G_5]$. Further, $\mP_2 \wedge \mP_4=\mP_5$ and $\mP_2 \vee \mP_4=\mP_1$. 
It follows that  $[G_2]\wedge [G_4]=\mG_{\mP_2\vee \mP_4}$ is the equivalence class of the complex reaction graphs class with representative $G_1$, and similarly
$[G_2]\vee [G_4]=  \mG_{\mP_2\wedge \mP_4}=[G_5]$. While $G_2$ and $G_4$ are weakly reversible, $G_5$ is not.
Following Proposition~\ref{prop:onebyone}(ii), the inclusion $G_1\preceq G_3$  can be broken down into two sequences of inclusions $G_1\preceq G_2 \preceq G_3$ and $G_1\preceq G_2' \preceq G_3$, with $G_2'=G_\mP$ and 
$\mP=\big\{ \{1,8,9,11\}, \{2,3\}, \{4,12\}, \{6,7\},\{5,10\} \big\}$. 
\end{myexampleHorn}
 
 \begin{table}[t]
\begin{center}
\begin{tabular}{clc}
\toprule
$i$ & $\mP_i$ such that $G_i=G_{\mP_i}$  & $\delta_{G_i}$ \\ 
\toprule\toprule
$1$ &   \big\{ \{1,8,9,11\}, \{2,3\}, \{4,5,10,12\}, \{6,7\} \big\} & $2$\\ \midrule
$2$   & \big\{ \{1,11\}, \{2,3\}, \{4,5,10,12\}, \{6,7\},\{8,9\} \big\}  & $3$ \\ \midrule
$3$   &\big\{ \{1,11\}, \{2,3\}, \{4,12\}, \{6,7\},\{8,9\},\{5,10\} \big\}  &  $ 3$  \\ \midrule
$4$ &    \big\{ \{1,8\}, \{2,3\}, \{4,5,10,12\}, \{6,7\},\{9,11\} \big\}   & $3$ \\ \midrule
$5$ &    \big\{ \{1\}, \{2,3\}, \{4,5,10,12\}, \{6,7\},\{8\},\{9\},\{11\} \big\} & $5$ \\ \midrule
$6$ &  \big\{ \{1\}, \{2\}, \{3\}, \{4\}, \{5\}, \{6\}, \{7\}, \{8\}, \{9,11\}, \{10,12\} \big\}  & $4$\\ \midrule
$7$ &  \big\{ \{1\}, \{2\}, \{3\}, \{4\}, \{5\}, \{6\}, \{7\}, \{8\}, \{9\}, \{10\}, \{11\}, \{12\} \big\}  & $5$ \\ \bottomrule
\end{tabular}
\end{center}
\caption{Partitions defining the reaction graphs in Figure~\ref{fig:horn} and their deficiencies.}\label{fig:partition}
\end{table}

We conclude the section  with an alternative description of the partial order. 
 
\begin{lemma}\label{lem:morphism-inclusion}
$G'\preceq G$ if and only if there exists a morphism of reaction graphs  $\varphi\colon V_G\to V_{G'}$.
\end{lemma}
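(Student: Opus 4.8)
The plan is to translate the statement into the language of admissible partitions via Lemma~\ref{lem:equiv}. Write $G=G_{\mP}$ and $G'=G_{\mP'}$ for admissible partitions $\mP=\{P_1,\dots,P_{m_G}\}$ and $\mP'=\{P'_1,\dots,P'_{m_{G'}}\}$ with one block per node, as provided by Lemma~\ref{lem:equiv}(i). By the definition of $\preceq$, the relation $G'\preceq G$ is then equivalent to $\mP\le\mP'$, i.e.\ to the statement that every block of $\mP$ is contained in some block of $\mP'$. It therefore suffices to prove that a morphism $\varphi\colon V_G\to V_{G'}$ exists if and only if $\mP$ refines $\mP'$.

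For the ``if'' direction, assume $\mP\le\mP'$. Since the blocks of $\mP'$ are pairwise disjoint and nonempty, every block $P_k\in\mP$ is contained in a \emph{unique} block of $\mP'$, say $P_k\subseteq P'_{\ell}$, and I set $\varphi(k):=\ell$. I would then verify the two axioms of Definition~\ref{def:morphismgraph} directly from Definition~\ref{def:partition}: picking any $i\in P_k\subseteq P'_{\varphi(k)}$ gives $Y_{k}=Y_{sp,i}=Y'_{\varphi(k)}$, which is axiom (ii); and if $k\to k'$ is an edge of $G=G_{\mP}$, it arises from an edge $i\to j$ of $G_{sp}$ with $i\in P_k\subseteq P'_{\varphi(k)}$ and $j\in P_{k'}\subseteq P'_{\varphi(k')}$, so $\varphi(k)\to\varphi(k')$ is an edge of $G'=G_{\mP'}$, which is axiom (i). Hence $\varphi$ is a morphism.

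For the ``only if'' direction, assume a morphism $\varphi$ exists; I will show $P_k\subseteq P'_{\varphi(k)}$ for every node $k$ of $G$, which gives $\mP\le\mP'$ and hence $G'\preceq G$. Recall from the proof of Lemma~\ref{lem:equiv}(i) that $i'\in P_k$ means exactly that the unique edge of $G_{sp}$ incident to $i'$ and the edge of $G$ incident to $k$ correspond to the same reaction, with $i'$ and $k$ occupying the same (source or target) role. As computed in the proof of Lemma~\ref{lem:surjective}, $\varphi$ being a morphism forces $R'_{\varphi(i)\to\varphi(j)}=R_{i\to j}$, so the edge of $G'$ carrying a given reaction is the image under $\varphi$ of the edge of $G$ carrying it, with source and target roles preserved. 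Consequently the edge of $G_{sp}$ incident to $i'$ also corresponds to the edge of $G'$ incident to $\varphi(k)$ in the matching role, that is, $i'\in P'_{\varphi(k)}$, which is the desired inclusion.

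The whole argument is bookkeeping once Lemma~\ref{lem:equiv} is in hand. The only delicate point is the ``only if'' direction, where one must make sure that the identification of split-graph nodes with nodes of $G$ (resp.\ $G'$) through the bijections $R$, $R'$ is compatible with $\varphi$ and keeps the source/target distinction, so that $i'\in P_k$ genuinely forces $i'\in P'_{\varphi(k)}$; this compatibility is precisely the identity $R'_{\varphi(i)\to\varphi(j)}=R_{i\to j}$ already isolated in the proof of Lemma~\ref{lem:surjective}.
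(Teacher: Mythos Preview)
Your proof is correct and follows essentially the same approach as the paper: both directions are argued by translating into admissible partitions via Lemma~\ref{lem:equiv} and building $\varphi$ from the block-inclusion map. Your reverse direction is in fact more carefully argued than the paper's terse version---you explicitly use edge-preservation (the identity $R'_{\varphi(i)\to\varphi(j)}=R_{i\to j}$) to conclude $i'\in P'_{\varphi(k)}$, whereas the paper's one-line justification appeals only to the label equality $(Y_{\mP'})_{\varphi(i)}=Y_{sp,k}$, which by itself does not pin down the block containing $k$.
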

\begin{proof}
Let $\mP=\{P_1,\dots,P_{m_G}\}$ and $\mP'=\{P'_1,\dots,P'_{m_{G'}}\}$ be the  
admissible partitions   such that $G=G_\mP$ and $G'=G_{\mP'}$.
Since $G'\preceq G$, we have $\mP\leq \mP'$. 
For each $i\in V_G$, let $\varphi(i)\in V_{G}'$ be the index such that $P_i\subseteq P_{\varphi(i)}$.
Since the partitions are admissible, then
$$Y_i = (Y_{\mP})_i =Y_{sp,k} =(Y_{\mP'})_{\varphi(i)}= Y'_{\varphi(i)},\quad \textrm{for all }k\in P_i\subseteq P_{\varphi(i)}'. $$
Further, if $i\rightarrow j\in E_G = E_\mP$, then by definition there exist $i'\in P_i$ and $j'\in P_j$ such that $i'\rightarrow j'$ is an edge of $G_{sp}$. Since also $i'\in P'_{\varphi(i)}$ and $j'\in P'_{\varphi(j)}$, it follows that $\varphi(i)\rightarrow \varphi(j) \in E_{\mP'} = E_{G'}$. By Definition~\ref{def:morphismgraph}, $\varphi$ is a morphism of reaction graphs.

For  the reverse implication, let $i\in V_G$ and $k\in P_i$. Since $\varphi$ is a morphism and $G=G_\mP$ it holds $(Y_{\mP'})_{\varphi(i)}=(Y_{\mP})_i =Y_{sp,k}$. 
 Hence $k\in P'_{\varphi(i)}$. This shows $P_i\subseteq P'_{\varphi(i)}$ and thus $\mP\leq \mP'$ as desired.
\end{proof}

The morphism $\varphi$ associated with an inclusion $G'\preceq G$ identifies the nodes of $G$ that are joined to form $G'$, or, in terms of  partitions, the subsets of the partition defining $G$ that are joined to form the partition defining $G'$.
By Lemma~\ref{lem:morphism-inclusion}, two reaction graphs $G$ and $G'$ are isomorphic  if and only if $G\preceq G'$ and $G'\preceq G$, which is consistent with Lemma \ref{lem:equiv}.

\begin{myexampleHorn}\label{ex:horn-inclusions}
For the  inclusion  $G_4 \preceq G_5$ in Figure~\ref{fig:horn}, 
the map $\varphi\colon V_{G_5}\rightarrow V_{G_4}$ is
$$ \varphi(1) =\varphi(5)=1,\quad  \varphi(2)=2,   \quad \varphi(3)=3,\quad \varphi(4)=4, \quad \varphi(6)=\varphi(7)=5. $$
\end{myexampleHorn}

\subsection{Incidence matrix and weak reversibility}
Given a reaction graph $G$ with labeling $Y$, we consider the matrix $Y\in \R^{n\times m_G}$, called the \emph{labeling matrix}, with $j$-th column being the vector $Y_j$
(the same notation is used as for the labeling function, since both objects convey the same information).

We define a numbering on the edge set $E_G$ by means of the bijection $R$ and the numbering of $\mR$.
The \emph{incidence matrix} $C_G$ of the reaction graph $G$ is the $m_G\times p$ matrix with nonzero entries defined by 
$(C_G)_{k_1j} =-1$,    $(C_G)_{k_2j} =1$,    if the $j$-th edge is $k_1\rightarrow k_2$. Each column of $C_G$ has only two non-zero entries and the column sums  are zero. The rank of the incidence matrix  is $m_G - \ell_G$, where $\ell_G$ is the number of connected components of $G$ \cite[Prop. 4.3]{Biggs}.

\begin{proposition}\label{prop:incidence} 
Let $G$ be a reaction graph  associated with a reaction network $\mN$. Then, the following holds:
\begin{enumerate}[(i)]
\item 
$N=Y C_G.$
\item The kernel of $C_G$ contains a positive vector if and only if $G$ is weakly reversible.
\end{enumerate}
Assume $G,G'$ are two reaction graphs associated with  $\mN$ such that $G'\preceq G$. Then, we have
\begin{enumerate}
\item[(iii)]  There exists an $(m_{G'}\times m_{G})$-matrix $B$ such that $C_{G'}=BC_{G}$.
\item[(iv)]  If $G$ is weakly reversible, then so is $G'$.
\end{enumerate}
\end{proposition}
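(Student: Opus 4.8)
The plan is to prove the four parts in the stated order, noting at the outset that (iv) will be a one-line consequence of (ii) and (iii). Part (i) is a column-by-column check: fix $j$ and let $k_1\to k_2$ be the $j$-th edge of $G$. By Definition~\ref{def:reaction-graph}(ii) this edge maps to the reaction $r_j\colon Y_{k_1}\to Y_{k_2}$, so the $j$-th column of $N$ is $Y_{k_2}-Y_{k_1}$; meanwhile the $j$-th column of $C_G$ is $e_{k_2}-e_{k_1}$ (standard basis vectors of $\R^{m_G}$), so $(YC_G)_j = Y(e_{k_2}-e_{k_1}) = Y_{k_2}-Y_{k_1}$. As $j$ was arbitrary, $N = YC_G$.

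For (ii) I would use the classical correspondence between strictly positive vectors in the kernel of an incidence matrix and directed cycles. A vector $w\in\R^p$ lies in $\ker C_G$ exactly when the total weight of edges entering each node equals the total weight of edges leaving it (conservation of flow), since the $j$-th column of $C_G$ contributes $-w_j$ at the source and $+w_j$ at the target of the $j$-th edge. For the ``if'' direction: if $G$ is weakly reversible, every connected component is strongly connected, so every edge $u\to v$ lies on a directed (simple) cycle, obtained by appending a simple directed path from $v$ back to $u$; the indicator vector of a directed cycle lies in $\ker C_G$, and summing the indicator vectors of a finite family of such cycles, one through each edge, produces a strictly positive element of $\ker C_G$. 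For the ``only if'' direction: given a positive $w\in\ker C_G$ and any edge $i\to j$, conservation at $j$ forces an outgoing edge from $j$ with positive weight; iterating and using finiteness of $V_G$ yields a directed closed walk, hence a directed cycle, through $i\to j$, so each connected component is strongly connected and $G$ is weakly reversible. (This is the graph-theoretic heart of the proposition; alternatively it can be attributed to the standard references on complex balanced systems.)

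For (iii) I would invoke Lemma~\ref{lem:morphism-inclusion}: $G'\preceq G$ provides a morphism of reaction graphs $\varphi\colon V_G\to V_{G'}$. Define the aggregation matrix $B\in\R^{m_{G'}\times m_G}$ by $B_{\ell i}=1$ if $\varphi(i)=\ell$ and $B_{\ell i}=0$ otherwise, so that $Be_i=e_{\varphi(i)}$. Let $i_1\to i_2$ be the $j$-th edge of $G$. By Definition~\ref{def:morphismgraph}(i), $\varphi(i_1)\to\varphi(i_2)$ is an edge of $G'$, and since the bijections $R,R'$ identify it with the same reaction $r_j$, it is precisely the $j$-th edge of $G'$; moreover $\varphi(i_1)\ne\varphi(i_2)$ because $Y'_{\varphi(i_1)}=Y_{i_1}\ne Y_{i_2}=Y'_{\varphi(i_2)}$. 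Hence the $j$-th column of $C_{G'}$ equals $e_{\varphi(i_2)}-e_{\varphi(i_1)}=B(e_{i_2}-e_{i_1})=B\cdot(j\text{-th column of }C_G)$, so $C_{G'}=BC_G$.

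Finally, for (iv) I would simply combine (ii) and (iii): if $G$ is weakly reversible, (ii) gives a positive $w\in\R^p$ with $C_G w=0$; then $C_{G'}w=BC_G w=0$ by (iii), so $\ker C_{G'}$ contains a positive vector, and applying (ii) to $G'$ shows $G'$ is weakly reversible. The main obstacle is part (ii) — establishing the equivalence between the existence of a strictly positive kernel vector and strong connectivity of each connected component, which requires the cycle-covering / flow-following argument above. Parts (i), (iii), (iv) are then short formal consequences: (i) is a direct column computation, (iii) packages the morphism of Lemma~\ref{lem:morphism-inclusion} into the $0/1$ aggregation matrix $B$, and (iv) is immediate from (ii) and (iii).
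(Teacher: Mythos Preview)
Your proof is correct and follows essentially the same approach as the paper's own proof. The only minor differences are presentational: for (ii) the paper simply cites the literature on cycle bases of the cone $\ker(C_G)\cap\R^p_{\geq 0}$ while you spell out the flow-conservation/cycle-covering argument explicitly, and for (iii) you correctly invoke Lemma~\ref{lem:morphism-inclusion} to obtain the morphism $\varphi$ (the paper's reference to Lemma~\ref{lem:surjective} is slightly imprecise on this point); your added remark that $\varphi(i_1)\neq\varphi(i_2)$ because the labels differ is a useful clarification the paper leaves implicit.
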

\begin{proof}
(i) The $j$-th column of $N$ is $y'-y$ if $r_j\colon  y\rightarrow y'$. 
If $k_2\rightarrow k_1$ is the $j$-th edge of $G$, then the $j$-th column of $YC_G$ is the  vector $Y_{k_2}-Y_{k_1}$. 
Since $r_j=R_{k_2\rightarrow k_1}$, we have $Y_{k_2}= y$ and $Y_{k_1}=y'$, proving (i).

 (ii)  This is well known. For instance, use that the elementary cycles are the minimal generators of the polyhedral cone $\ker(C_G)\cap \R^{p}_{\geq 0}$ \cite[Prop. 4]{stadler} and that a digraph is weakly reversible if and only if each edge is part of an elementary cycle. See also \cite[Remark 6.1.1]{Feinbergss} or \cite[Lemma 3.3]{muller}. 
 
(iii) 
Let $V_{G} \xrightarrow{\varphi} V_{G'} $ be the surjective map from Lemma~\ref{lem:surjective}.
Let $B\in \R^{m_{G'}\times m_{G}}$ such that the $(\varphi(i),i)$-th entry is $1$, for $i=1,\dots,m_{G}$,  and the rest of the entries are zero.
If the $j$-th edge of $G$ is $k_1\rightarrow k_2$, then the nonzero entries of the $j$-th column of $BC_{G}$ are $-1$ at row $\varphi(k_1)$ and
$1$ at row  $\varphi(k_2)$. Thus $BC_G=C_{G'}$.

(iv) Follows from  (ii) and (iii).
\end{proof}

\subsection{Deficiency}

The deficiency of a reaction network is an important characteristic in the study of steady states and their properties \cite{feinberg1972,feinberg-def0, Craciun-Sturmfels, shinar-science, boros:deficiency}.
Here, we extend the classical definition of deficiency to an arbitrary reaction graph. 

\begin{definition}\label{def:deficiency}
Let $G$ be a reaction graph associated with a reaction network $\mN$. The \emph{deficiency} of 
 $G$ is  the number
$$\delta_{G}= m_G - \ell_G-s,$$ where $m_G$ is the number of nodes of $G$,
$\ell_G$ is the number of connected components of $G$ and $s$ the rank of the stoichiometric subspace of $\mN$. 
\end{definition}

The deficiency of a reaction graph depends only on the equivalence class of the reaction graph. Hence it makes sense to talk about the deficiency of an equivalence class $\mG$, which we denote by $\delta_\mG$. We have $\delta_{[G]} = \delta_G$.
In particular, 
we refer to the \emph{deficiency of $\mN$} as the the deficiency of the equivalence class  $\mG_c$ of the complex reaction graphs (in accordance with \cite{feinberg1972}). 

The deficiencies of the reaction graphs in Figure~\ref{fig:horn} are given in Table~\ref{fig:partition}, using that $s=1$.

\begin{example}\label{ex:defsplitdet}
A split reaction graph has $2p$ nodes and $p$ connected components. 
Thus the deficiency of the split equivalence class $\mG_{sp}$ is 
$\delta_{\mG_{sp}} = 2p-  p-s = p-s.$

Given a reversible network with $p=2q$ reactions, then any detailed reaction graph has $p$ nodes and $q$  connected components. Thus the deficiency of the detailed equivalence class is 
$ \delta = p -q-s = q-s.$
\end{example}

The deficiency of a  reaction graph is  a non-negative number, as we show below.
For a given order $G_1,\dots,G_{\ell_G}$ of the connected components of $G$, define the linear map $\Psi_G$ by
\begin{equation}\label{eq:chiG}
 \Psi_G \colon \R^{m_G} \rightarrow \R^{n+\ell_G} \qquad \textrm{with} \qquad  \Psi_G (x)= \left(Yx, \sum_{i\in G_1} x_i,\dots,\sum_{i\in G_{\ell_G}} x_i\right), 
 \end{equation}
where $Yx$ defines the first $n$ coordinates of $\Psi_G (x)$. The following proposition is well known in the context of complex balanced steady states \cite{hornjackson,Craciun-Sturmfels}. We will use the result in Section~\ref{sec:node-balanced}.

\begin{proposition}\label{lem:defkernel}
For a reaction graph $G$ with labeling function $Y$, the following equalities hold:
$$ \ker \Psi_G= \ker Y \cap \im C_G,\qquad  \delta_G = \dim (\ker Y \cap \im C_G)=\dim \ker \Psi_G.$$
\end{proposition}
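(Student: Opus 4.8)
The plan is to prove the three claimed identities by chasing dimensions through the two linear maps $C_G\colon \R^p\to\R^{m_G}$ and $Y\colon\R^{m_G}\to\R^n$, together with the ``summation'' map recording connected components. First I would establish the set-theoretic equality $\ker\Psi_G=\ker Y\cap\im C_G$. The inclusion $\supseteq$ is the substantive direction: if $x=C_G u$ for some $u\in\R^p$ and $Yx=0$, then I must check that each component-sum $\sum_{i\in G_a}x_i$ vanishes. This follows because $C_G$ is the incidence matrix, so every column has entries $-1$ and $+1$ supported on two nodes that lie in the same connected component; hence for each $a$ the vector $\mathbf 1_{G_a}$ (the indicator of the node set of $G_a$) satisfies $\mathbf 1_{G_a}^{\top}C_G=0$, giving $\sum_{i\in G_a}x_i=\mathbf 1_{G_a}^{\top}C_G u=0$. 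For the inclusion $\subseteq$, suppose $\Psi_G(x)=0$, so $Yx=0$ and all component sums vanish; it remains to see $x\in\im C_G$. The orthogonal complement of $\im C_G$ is $\ker C_G^{\top}$, which is spanned exactly by the indicator vectors $\mathbf 1_{G_1},\dots,\mathbf 1_{G_{\ell_G}}$ of the connected components (a standard fact: $\rank C_G=m_G-\ell_G$, and these $\ell_G$ indicators are linearly independent and lie in $\ker C_G^{\top}$). So $x\perp\im C_G$ would mean $x$ is a linear combination of the $\mathbf 1_{G_a}$; but $x$ having all component sums zero forces, on each component, that the coefficient times $|G_a|$ is zero, hence $x\in(\im C_G^{\perp})^{\perp}=\im C_G$. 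Actually the cleanest phrasing: decompose $\R^{m_G}=\im C_G\oplus\spann\{\mathbf 1_{G_1},\dots,\mathbf 1_{G_{\ell_G}}\}$; the component-sum part of $\Psi_G$ is injective on the second summand and zero on the first, while $Yx=0$ is the remaining constraint, so $\ker\Psi_G$ is precisely the set of $x\in\im C_G$ with $Yx=0$.

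Once the set equality is in hand, the dimension count is the easy part. I would compute $\dim\ker\Psi_G$ via rank-nullity for $\Psi_G\colon\R^{m_G}\to\R^{n+\ell_G}$: it suffices to show $\rank\Psi_G=s+\ell_G$, whence $\dim\ker\Psi_G=m_G-s-\ell_G=\delta_G$. For the rank, note $\im\Psi_G$ sits inside $\R^n\times\R^{\ell_G}$; its projection onto the last $\ell_G$ coordinates is surjective (restricting $\Psi_G$ to $\spann\{\mathbf 1_{G_a}\}$ already hits all of $\R^{\ell_G}$, since the matrix of component sums on these generators is diagonal with entries $|G_a|>0$), so $\rank\Psi_G=\ell_G+\dim(\text{kernel of that projection restricted to }\im\Psi_G)$. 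The kernel of the projection, intersected with $\im\Psi_G$, is $\{(Yx,0):\text{all component sums of }x\text{ vanish}\}=\{Yx:x\in\im C_G\}=\im(YC_G)=\im N$ by Proposition~\ref{prop:incidence}(i), which has dimension $s$. Hence $\rank\Psi_G=\ell_G+s$ and the middle and last equalities follow; combined with the first set-theoretic identity, $\delta_G=\dim(\ker Y\cap\im C_G)$ as well.

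The main obstacle is getting the linear-algebra bookkeeping around $\im C_G$ and its complement exactly right — specifically, pinning down that $\ker C_G^{\top}$ is spanned by the connected-component indicators and that $\R^{m_G}$ decomposes as $\im C_G$ together with those indicators as a \emph{direct} sum (not merely a spanning set), since that direct-sum decomposition is what makes both the set equality and the rank computation clean. This is the graph-theoretic heart of the classical ``$\delta\ge 0$'' argument of Horn--Jackson, so I would cite \cite{Biggs} for $\rank C_G=m_G-\ell_G$ and then verify the indicator vectors span the orthogonal complement by an easy dimension match ($\ell_G$ independent vectors in a space of dimension $m_G-(m_G-\ell_G)=\ell_G$). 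Everything else is routine rank-nullity, and the fact $\delta_G\ge 0$ drops out as an immediate corollary since a kernel has nonnegative dimension.
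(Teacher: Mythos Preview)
Your proof is correct. The set-theoretic identity $\ker\Psi_G=\ker Y\cap\im C_G$ is handled exactly as in the paper: both arguments hinge on identifying $(\im C_G)^\perp$ with the span of the connected-component indicators $\mathbf 1_{G_a}$, so that ``all component sums vanish'' is equivalent to membership in $\im C_G$.

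For the dimension count the two arguments diverge. You compute $\rank\Psi_G$ directly: project $\im\Psi_G$ onto the last $\ell_G$ coordinates (surjective, since the indicators map to a diagonal matrix with entries $|G_a|$), and identify the fiber over $0$ with $Y(\im C_G)=\im(YC_G)=\im N$, giving $\rank\Psi_G=\ell_G+s$. The paper instead introduces the auxiliary linear map $\psi\colon\ker N\to\R^{m_G}$, $w\mapsto C_Gw$, observes that $\ker\psi=\ker C_G$ (because $\ker C_G\subseteq\ker N$ via $N=YC_G$) and that $\im\psi=\ker Y\cap\im C_G$, and then applies the first isomorphism theorem to get $\dim(\ker Y\cap\im C_G)=\dim\ker N-\dim\ker C_G=(p-s)-(p-m_G+\ell_G)=\delta_G$. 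Your route stays entirely on the node side ($\R^{m_G}$) and reuses the direct-sum decomposition already set up for the first part; the paper's route works on the reaction side ($\R^p$) and is a touch shorter once one is willing to introduce $\psi$. Both are clean; neither has a real advantage over the other.
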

\begin{proof}
To prove the first equality, let $e_i$ be the $i$-th unit vector of $\R^{m_G}$. For all 
$j\in\{1,\ldots,\ell_G\}$,
we have  $\omega_j:=\sum_{i\in G_j} e_i \in (\im C_G)^\perp$. This gives $\ell_G$ linearly independent vectors in $(\im C_G)^\perp$. Using that the rank of $C_G$ is $m_G-\ell_G$, we conclude that $\omega_1,\dots,\omega_{\ell_G}$ form a basis of $(\im C_G)^\perp$. It follows that $x\in \im C_G$ if and only if $\omega_j\cdot x=0$ for all $j\in\{1,\ldots,\ell_G\}$. Further, $\omega_j\cdot x=\sum_{i\in G_j} x_i$.

Using this, we have $x\in  \ker Y \cap \im C_G$ if and only  if $Yx=0$ and $\omega_j\cdot x=0$, which is equivalent to $\Psi_G(x)=0$. This concludes the proof of the first equality.

Consider now the linear map $\psi\colon \ker N \rightarrow \R^{m_G}$ defined by $w\mapsto C_Gw$ on $\ker N \subseteq \R^{p}$.
Since $\ker C_G \subseteq \ker N$ by Proposition \ref{prop:incidence}(i), we have 
$\ker \psi =  \ker C_G$.
On the other hand, 
\begin{align*}
 \im \psi  & = \{  C_G w \st w\in\R^p,  Nw=0\} =  \{  C_G w \st w\in\R^p,  YC_G w=0\}  =
\ker Y \cap \im C_G.
\end{align*}
By the first isomorphism theorem, $\ker N /  \ker C_G \cong \ker Y \cap \im C_G$. Using that 
$\ker C_G$ has dimension $p-m_G+\ell_G$, we have 
$$ \dim (\ker Y \cap \im C_G) = \dim \ker N - \dim \ker C_G = p - s - (p -m_G+\ell_G) = m_G  - \ell_G - s= \delta_G$$
which concludes the proof  of the second equality.
\end{proof}

The deficiency behaves surprisingly nice in connection with inclusion of reaction graphs.
In particular, it is possible to iteratively compute the deficiency of any reaction graph from the split reaction graph. This will be further exploited in the coming section in relation to node balanced steady states.

\begin{proposition}\label{prop:deficiency}
Assume $G'\preceq G$ for two reaction graphs associated with a reaction network $\mN$ and let $\varphi\colon V_{G} \rightarrow V_{G'}$ be the morphism defining the inclusion, cf. Lemma~\ref{lem:morphism-inclusion}.
\begin{enumerate}[(i)]
\item 
Assume that $m_{G} = m_{G'}+1$
 and let $i_1,i_2$ be the only two nodes of $G$ such that $\varphi(i_1)=\varphi(i_2)$. 
 If $i_1,i_2$ belong to the same connected component of $G$, then $\delta_{G'}=\delta_{G}-1$ and there is an undirected cycle through $\varphi(i_1)=\varphi(i_2)$ in $G'$, which is not in $G$.
Otherwise,  $\delta_{G'}=\delta_G.$
\item $\delta_{G'} \leq \delta_{G}$.
\item If $\Delta= \delta_{G} - \delta_{G'} > 0$, then there exists a sequence of $\Delta+1$ reaction graphs $G'= G_0\preceq G_1 \preceq \dots \preceq G_{\Delta-1}\preceq G_\Delta = G$ such that
$$ \delta_{G_i} = \delta_{G_{i-1}} + 1,\qquad i=1,\dots,\Delta.$$
\end{enumerate}
\end{proposition}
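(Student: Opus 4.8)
The plan is to prove (i) directly from the deficiency formula $\delta_H=m_H-\ell_H-s$, and then to obtain (ii) and (iii) by combining (i) with the one‑node‑at‑a‑time decomposition of Proposition~\ref{prop:onebyone}(ii).

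\emph{Part (i).} Since $s$ depends only on $\mN$, only $m_G-\ell_G$ changes under the merge. Collapsing the pair $i_1,i_2$ of $G$ into the single node $v_0:=\varphi(i_1)=\varphi(i_2)$ of $G'$ gives $m_{G'}=m_G-1$, and in $G'$ the node $v_0$ is incident to exactly the edges incident to $i_1$ or to $i_2$ in $G$. Hence if $i_1,i_2$ lie in the same connected component of $G$ no component is split or created, so $\ell_{G'}=\ell_G$ and $\delta_{G'}=\delta_G-1$; if they lie in different components those two components fuse, so $\ell_{G'}=\ell_G-1$ and $\delta_{G'}=\delta_G$. For the cycle assertion I would first note that Definition~\ref{def:morphismgraph}(ii) forces $Y_{i_1}=Y'_{v_0}=Y_{i_2}$, so, since $(y,y)\notin\mR$, there is no edge of $G$ between $i_1$ and $i_2$. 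Assuming $i_1,i_2$ share a component, pick a simple undirected path $i_1=u_0,u_1,\dots,u_r=i_2$ in $G$; then $r\ge 2$ and the $u_t$ are pairwise distinct. As $\varphi$ is injective on $V_G\setminus\{i_2\}$ (its only coincidence is the pair $\{i_1,i_2\}$), the nodes $v_0=\varphi(u_0),\varphi(u_1),\dots,\varphi(u_{r-1})$ are distinct, so the image of the path is a genuine undirected cycle of $G'$ through $v_0$; read back in $G$ its edge set is an $i_1$–$i_2$ path, which is not a cycle because $i_1\ne i_2$, so this is a new cycle.

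\emph{Part (ii).} If $m_G=m_{G'}$ then $G\cong G'$ and the deficiencies coincide. Otherwise $k:=m_G-m_{G'}>0$, and by Proposition~\ref{prop:onebyone}(ii) there is a chain $G'=H_0\preceq H_1\preceq\dots\preceq H_k=G$ with $m_{H_i}=m_{H_{i-1}}+1$; applying part (i) to each link gives $\delta_{H_{i-1}}\le\delta_{H_i}$, and chaining yields $\delta_{G'}\le\delta_G$. (A self‑contained alternative avoiding (i): with $B$ as in Proposition~\ref{prop:incidence}(iii) one has $C_{G'}=BC_G$ and $Y=Y'B$, whence $\ker Y'\cap\im C_{G'}=B(\ker Y\cap\im C_G)$, so by Proposition~\ref{lem:defkernel} $\delta_{G'}=\dim B(\ker Y\cap\im C_G)\le\delta_G$.)

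\emph{Part (iii).} Take the chain $G'=H_0\preceq\dots\preceq H_k=G$ of Proposition~\ref{prop:onebyone}(ii) ($k=m_G-m_{G'}\ge\Delta>0$). By part (i) the integers $\delta_{H_0}\le\dots\le\delta_{H_k}$ are non‑decreasing with consecutive increments in $\{0,1\}$ and run from $\delta_{G'}$ to $\delta_{G'}+\Delta$, so every intermediate integer is attained, the indices of a fixed deficiency value form a contiguous block, these blocks occur in increasing order, $0$ lies in the block of value $\delta_{G'}$, and $k$ in the block of value $\delta_G$. I then set $i_0=0$, $i_\Delta=k$, and for $1\le t\le\Delta-1$ choose any index $i_t$ of deficiency $\delta_{G'}+t$; automatically $0=i_0<i_1<\dots<i_\Delta=k$. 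The subsequence $G_t:=H_{i_t}$ satisfies $G_{t-1}\preceq G_t$ by transitivity of $\preceq$ and $\delta_{G_t}=\delta_{G'}+t=\delta_{G_{t-1}}+1$, giving the required chain of $\Delta+1$ reaction graphs. The main obstacle is the cycle claim in (i): converting the purely numerical drop of $m-\ell$ into an explicit new undirected cycle through $v_0$, and checking that the push‑forward of a simple path is again a simple cycle — which is exactly where the irreflexivity of $\mR$ (no edge between $i_1$ and $i_2$) is used; everything else is bookkeeping on top of (i) and Proposition~\ref{prop:onebyone}(ii).
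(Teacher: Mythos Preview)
Your proof is correct and follows essentially the same route as the paper: establish (i) directly from $\delta_H=m_H-\ell_H-s$ by counting how $\ell$ changes when two nodes are merged, push forward a simple $i_1$--$i_2$ path via $\varphi$ to get the new cycle, and then derive (ii) and (iii) from (i) together with the one-step chain of Proposition~\ref{prop:onebyone}(ii). Your treatment is in fact more careful than the paper's (you justify $r\ge2$ via irreflexivity of $\mR$, spell out the subsequence extraction in (iii), and add a neat self-contained alternative for (ii) via $C_{G'}=BC_G$, $Y=Y'B$ and Proposition~\ref{lem:defkernel}), but the underlying strategy is the same.
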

\begin{proof}
(i) 
Using that $m_{G} = m_{G'}+1$, we have that
$$\delta_{G} - \delta_{G'} =m_{G'}+1 - \ell_{G} - s -(m_{G'} - \ell_{G'} - s ) =  \ell_{G'} - \ell_{G} + 1. $$
Let $G_1,G_2$ be the connected components of $G$ containing $i_1,i_2$ respectively. 
Outside $G_1$ and $G_2$, the morphism $\varphi$ is  a permutation of the nodes, while  $G_1$ and $G_2$ are mapped to the connected component of $G'$ containing $\varphi(i_1)=\varphi(i_2)$. Thus, if $G_1=G_2$, $\ell_G = \ell_{G'}$, while if $G_1\neq G_2$, then $\ell_{G'} = \ell_{G} -1$. This gives the stated relation between $\delta_G$ and $\delta_{G'}$.

Furthermore, consider an undirected path without repeated nodes between $i_1$ and $i_2$ in $G$. The image by $\varphi$ of this path is a path of $G'$, with end points $\varphi(i_1)=\varphi(i_2)$.  Thus there is an undirected cycle in $G'$. There are no repeated nodes since $\varphi$ is one-to-one on $ V_{G}\setminus \{i_1,i_2\}$.

(ii) and (iii) follow from (i) and Proposition~\ref{prop:onebyone}.
\end{proof}

The above results have some interesting consequences. 
Assume $G'\preceq G$. If $G,G'$ have exactly the same cycles (given the correspondence of  edges between the two graphs), then their deficiency is the same by Proposition \ref{prop:deficiency}(i). 
Further, the deficiency of $G'$ can be computed iteratively from that of $G$ by joining pairs of nodes, one at a time, and checking whether two connected components merge or not.

\begin{myexampleHorn}
Consider the inclusion $G_1\preceq G_2\preceq G_3$ and the deficiencies given  in Table~\ref{fig:partition}.
The inclusion $G_2\preceq G_3$ is obtained by  joining the nodes  $3,6$, which changes the number of connected components. Thus, $\delta_{G_2}=\delta_{G_3}=3$. 
For the inclusion $G_1\preceq G_2$ one joins the nodes $1,5$, which does not alter the number of connected components and a new cycle is created in $G_1$, namely $1\rev 3$. The deficiency is reduced by one,  hence
$\delta_{G_1} = \delta_{G_2}-1 = 2$.
\end{myexampleHorn}

\section{Node balanced steady states}\label{sec:node-balanced}
\label{sec:node}

In this section we introduce  \emph{node balanced steady states} for a given reaction graph, and show that
complex and detailed balanced steady states, two well-studied classes of steady states, are examples of node balanced steady states.
We use the partial order on the equivalence classes of reaction graphs  to deduce further properties of node balanced steady states.

\subsection{Definition and first properties}

\begin{definition}\label{def:node-balanced}
Let $\mN$  be a  reaction network, $G=(V_G,E_G,Y)$ an associated reaction graph and $v$ a kinetics.
A \emph{node balanced steady state} of $\mN$ with respect to $G$ is a solution to the equation
$$C_G\, v(x)=0,\qquad x\in \R^n_{\geq 0}.$$
Equivalently,  $x^*\in \R^n_{\geq 0}$ is 
a node balanced steady state if for all nodes $i$ of $C_G$ it holds
\begin{equation}\label{eq:node-condition}
 \sum_{j\in V_G \st i\rightarrow j \in E_G} v_{Y_i\rightarrow Y_j} (x^*) =  \sum_{j\in V_G \st j\rightarrow i \in E_G} v_{Y_j\rightarrow Y_i} (x^*),   
 \end{equation}
where $v_{Y_i\rightarrow Y_j}$ is the rate function of the reaction $Y_i\rightarrow Y_j$.
\end{definition}

By Proposition~\ref{prop:incidence}(i),  any node balanced steady state is also a steady state of the network. 
 Further, a direct consequence of Proposition~\ref{prop:incidence} 
 is the following result.

\begin{proposition}\label{prop:nodeinclusion}
Let $\mN$  be a  reaction network with a kinetics $v$ and let $G'\preceq G$ be two associated reaction graphs.
\begin{enumerate}[(i)]
\item If there exists a positive node balanced steady state with respect to $G$, then $G$ is weakly reversible.
\item Any node balanced steady state of $\mN$ with respect to $G$ is also a node balanced steady state with respect to $G'$. 
\end{enumerate}
\end{proposition}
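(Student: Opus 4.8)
The plan is to derive both parts directly from Proposition~\ref{prop:incidence}, since node balancing is phrased entirely in terms of the incidence matrix.

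For part (i), suppose $x^*\in\R^n_{>0}$ is a positive node balanced steady state with respect to $G$, so that $C_G\,v(x^*)=0$. Since $v$ is a kinetics and $x^*$ is positive, we have $v(x^*)\in\R^p_{>0}$ by the definition of a kinetics. Thus $v(x^*)$ is a positive vector in $\ker C_G$. By Proposition~\ref{prop:incidence}(ii), the existence of a positive vector in $\ker C_G$ is equivalent to $G$ being weakly reversible, which gives the claim.

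For part (ii), let $x^*$ be any node balanced steady state with respect to $G$, i.e. $C_G\,v(x^*)=0$. Since $G'\preceq G$, Proposition~\ref{prop:incidence}(iii) provides an $(m_{G'}\times m_G)$-matrix $B$ with $C_{G'}=BC_G$. Then
$$C_{G'}\,v(x^*) = B\,C_G\,v(x^*) = B\cdot 0 = 0,$$
so $x^*$ is a node balanced steady state with respect to $G'$. (Alternatively, one can argue edge-by-edge: the morphism $\varphi\colon V_G\to V_{G'}$ of Lemma~\ref{lem:morphism-inclusion} partitions the nodes of $G$ into fibers, and summing the balance equations \eqref{eq:node-condition} over the nodes of $G$ lying in a fiber $\varphi^{-1}(k)$ yields precisely the balance equation at node $k$ of $G'$, since internal edges cancel — this is exactly the content of $C_{G'}=BC_G$.)

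Neither part presents a genuine obstacle: both are short deductions from the already-established structural Proposition~\ref{prop:incidence}. The only point requiring minor care is invoking the correct hypothesis on $v$ (a kinetics maps $\R^n_{>0}$ into $\R^p_{>0}$) in part (i), and keeping track of which direction of the inclusion $G'\preceq G$ corresponds to the existence of the factoring matrix $B$ in part (ii).
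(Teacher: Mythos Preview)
Your proof is correct and matches the paper's approach: the paper simply states that the proposition is a direct consequence of Proposition~\ref{prop:incidence}, and you have spelled out exactly how parts (ii) and (iii) of that proposition yield the two claims.
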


By
Proposition~\ref{prop:nodeinclusion}(ii),  a node balanced steady state with respect to $G$ is a node balanced steady state with respect to any reaction graph equivalent to $G$. 
Thus,  we may equivalently refer to a node balance steady state with respect to an equivalence class of reaction graphs  or to a particular reaction graph in that class. 
Further, a \emph{complex balanced steady state } is a node balanced steady state with respect to the  equivalence class of complex reaction graphs, and similarly, a \emph{detailed balanced steady state} is a node balanced steady state with respect to the  equivalence class of detailed reaction graphs.

By Proposition~\ref{prop:nodeinclusion}(ii)  and equation~\eqref{eq:complex-small}, any node balanced steady state of $\mN$  is also a complex balanced steady state and we recover the well-known fact that detailed balanced steady states are complex balanced steady states.
In fact, more is true. We will show in Corollary~\ref{cor:node} that  if $G'\preceq G$ and $\delta_{G'}=\delta_G$, then the reverse implication of Proposition~\ref{prop:nodeinclusion}(ii)   holds for mass-action kinetics. That is, both reaction graphs give rise to the same set of node balanced steady states, and there exist node balanced steady states for the same set of reaction  rate constants. 

Since a split reaction graph is never weakly reversible, there cannot be positive node balanced steady states
with respect to this graph. Similarly, if a reaction network contains irreversible  reactions, then there are no positive detailed balanced steady states with respect to the detailed reaction graph.

\begin{myexampleHorn}\label{ex:hornexnode}
Consider the reaction graphs $G_2,G_3$ in Figure~\ref{fig:horn} and mass-action kinetics.  Focusing on the node $3$ with label $3X_2$,  
a positive node balanced steady state with respect to $G_2$
fulfils 
\begin{equation}\label{eq:exG3a}  
\k_{2} x_1x_2^2 +  \k_{5} x_1^3  =  \k_{6} x_2^3 +  \k_{3} x_2^3.
\end{equation}
For nodes $3$ and $6$, both with label $3X_2$, a positive node balanced steady state with respect to $G_3$
fulfills 
\begin{equation}\label{eq:exG3} 
\k_{2} x_1x_2^2 =  \k_{6} x_2^3,  \qquad \textrm{and}\qquad   \k_{5} x_1^3   =  \k_{3} x_2^3.  
\end{equation}
If these equations hold, then so does \eqref{eq:exG3a}. The equations for the other nodes are the same for both $G_2$ and $G_3$. Therefore a node balanced steady state with respect to $G_3$ is also a node balanced steady state with respect to $G_2$. We knew  this already from Proposition~\ref{prop:nodeinclusion}  since $G_2\preceq G_3$. 
The reverse statement is also true in this case. It follows from the fact that the deficiencies of the two reaction graphs agree. The proof will be given below.
\end{myexampleHorn}

\subsection{Main results}

In this section we  only consider reaction networks  with mass-action kinetics and weakly reversible reaction graphs. 
Given a network $\mN$, an associated reaction graph $G$ and a vector  $\k\in \R^p_{>0}$, we say that $(\mN,G,\k)$ is  \emph{node balanced} if there exists a \emph{positive} node balanced steady state of $\mN$ with respect to $G$ for  mass-action kinetics with reaction  rate constants $\k$.
The proofs of the three theorems below are given in Section~\ref{sec:proof}.

The first theorem is akin to Theorem 6A of \cite{hornjackson}.  

\begin{theorem}[{\bf Uniqueness and asymptotic stability}]\label{thm:complex}
Assume $\mN$ is a reaction network with mass-action kinetics with  reaction   rate constants $\k\in \R^p_{>0}$, and let $G$ be a  weakly reversible reaction graph.
 If $(\mN,G,\k)$ is node balanced,
  then any positive steady state of $\mN$ is node balanced with respect to $G$. Furthermore, there is one such steady state in each stoichiometric compatibility class and it is locally asymptotically stable relatively to the class.
\end{theorem}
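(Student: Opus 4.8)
The plan is to reproduce, in the language of the reaction graph $G$, the classical Lyapunov argument of Horn and Jackson \cite{hornjackson}; the identity $N=YC_G$ from Proposition~\ref{prop:incidence}(i) lets every computation usually carried out with the complex incidence matrix be done with $C_G$ instead. Fix the positive node balanced steady state $x^*$ granted by the hypothesis and set
\[
L(x)=\sum_{i=1}^n\Bigl(x_i\ln\frac{x_i}{x^*_i}-x_i+x^*_i\Bigr),\qquad x\in\R^n_{\ge0},
\]
with $0\ln0=0$. Since $t\mapsto t\ln t$ is strictly convex on $[0,\infty)$, $L$ is strictly convex on $\R^n_{\ge0}$, and since $\nabla L(x)=\ln(x/x^*)$ vanishes only at $x^*$, the point $x^*$ is the unique global minimizer of $L$, with value $0$. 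The key computation is $\dot L$ along \eqref{eq:ode}. Writing $a_{i\to j}=\k_{Y_i\to Y_j}(x^*)^{Y_i}>0$ for each edge $i\to j$ of $G$ and $\mu_i=(x/x^*)^{Y_i}$, the identity $\nabla L(x)=\ln(x/x^*)$ together with $N=YC_G$ gives $\dot L(x)=\sum_{i\in V_G}(\ln\mu_i)\,(C_Gv(x))_i$; expanding $(C_Gv(x))_i$ under mass-action kinetics, reindexing the edge sums, and subtracting the quantity $\sum_{i\to j\in E_G}a_{i\to j}(\mu_i-\mu_j)$ — which vanishes precisely because $x^*$ is node balanced, i.e. $C_Gv(x^*)=0$, so the weighted in- and out-degrees agree at every node — yields the Horn--Jackson-type identity
\[
\dot L(x)=-\sum_{i\to j\in E_G}a_{i\to j}\,\mu_i\,g\!\left(\frac{\mu_j}{\mu_i}\right),\qquad g(t)=t-1-\ln t .
\]
Since $g\ge0$ on $(0,\infty)$ and vanishes only at $t=1$, we get $\dot L\le0$, with equality if and only if $\mu_i=\mu_j$ for every edge of $G$.

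Next I would extract the structural consequence. The condition ``$\mu_i=\mu_j$ for every edge of $G$'' reads $C_G^{\!\top}\xi=0$ with $\xi_i=\ln\mu_i=Y_i\cdot\ln(x/x^*)$, i.e. $N^{\!\top}\ln(x/x^*)=0$, i.e. $\ln x-\ln x^*\in S^{\perp}$; and when it holds $\mu$ is constant on each connected component of $G$, whence a one-line computation from $C_Gv(x^*)=0$ gives $C_Gv(x)=0$, so $x$ is node balanced with respect to $G$. Conversely, if $x\in\R^n_{>0}$ is \emph{any} steady state, then $\dot L(x)=\ln(x/x^*)\cdot Nv(x)=0$, so the identity above forces $\mu_i=\mu_j$ for all edges. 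Putting the two together shows that every positive steady state of $\mN$ is node balanced with respect to $G$ and that the set of positive steady states equals $\{\,x\in\R^n_{>0}\st\ln x-\ln x^*\in S^{\perp}\,\}$; this is the first assertion.

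For the second assertion, fix a stoichiometric compatibility class $\mP_{x_0}=(x_0+S)\cap\R^n_{\ge0}$ meeting $\R^n_{>0}$ and minimize $L$ over the compact convex set $\mP_{x_0}$. Strict convexity gives a unique minimizer, and it must be interior: along the segment from a boundary point $\bar x$ (with $\bar x_i=0$ for some $i$) towards a positive point of $\mP_{x_0}$, the directional derivative of $L$ tends to $-\infty$, because $\epsilon\mapsto\epsilon c\ln\epsilon$ has derivative $\to-\infty$ as $\epsilon\to0^+$. At the interior minimizer $x^\sharp$ one has $\ln x^\sharp-\ln x^*\in S^{\perp}$, so $x^\sharp$ is a positive node balanced steady state in $\mP_{x_0}$, and it is the only one: any positive node balanced steady state in $\mP_{x_0}$ satisfies $\ln x-\ln x^*\in S^{\perp}$, hence is a critical point of the strictly convex function $L|_{x_0+S}$, of which there is at most one. (For $x_0=x^*$ this gives $x^\sharp=x^*$.)

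Finally, asymptotic stability. The class $\mP_{x_0}$ is forward invariant for mass-action kinetics \cite{Sontag:2001}. Replacing $x^*$ by the steady state $x^\sharp$ of the class in the construction above, $L_{x^\sharp}$ satisfies $L_{x^\sharp}(x^\sharp)=0<L_{x^\sharp}(x)$ for $x\ne x^\sharp$, and $\dot L_{x^\sharp}\le0$ with $\dot L_{x^\sharp}(x)=0$ on $\mP_{x_0}\cap\R^n_{>0}$ only at $x^\sharp$, because $\ln x-\ln x^\sharp\in S^{\perp}$ together with $x-x^\sharp\in S$ forces $x=x^\sharp$ (the number $\sum_i(x_i-x^\sharp_i)\ln(x_i/x^\sharp_i)$ is simultaneously zero and a sum of nonnegative terms). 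Thus $L_{x^\sharp}$ is a strict Lyapunov function on a neighborhood of $x^\sharp$ in $\mP_{x_0}$, and $x^\sharp$ is locally asymptotically stable relative to the class. The main obstacle I anticipate is the sign bookkeeping in the first display — confirming that the node balanced equations at $x^*$ are exactly what kills the correction term $\sum_{i\to j}a_{i\to j}(\mu_i-\mu_j)$; after that the argument transcribes the classical Horn--Jackson proof with $YC_G$ replacing the usual incidence matrix, and the weak reversibility of $G$ is used only implicitly, through the assumed existence of a positive node balanced steady state.
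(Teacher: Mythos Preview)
Your argument is correct and is precisely the route the paper explicitly acknowledges as valid but chooses not to follow. In Section~\ref{sec:proof} the authors remark that ``the original arguments work line by line because it is not explicitly used in the proofs that the complexes (node labels of the reaction graph) are different from each other,'' and then take ``a different route'': they build an auxiliary network $\mN'$ on species set $\mS\times V_G$ whose complex balanced steady states are in bijection (via an embedding $g\colon\R^n\to\R^{n\times m_G}$) with the node balanced steady states of $\mN$ with respect to $G$ (Lemma~\ref{lem:node-complex-steady-state}), and then invoke Horn--Jackson's Theorem~6A for $\mN'$ as a black box. You instead run the Horn--Jackson Lyapunov computation directly on $G$ via $N=YC_G$, and the algebra checks out, including the key point that node balance at $x^*$ is exactly what kills the correction term $\sum_{i\to j}a_{i\to j}(\mu_i-\mu_j)$. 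What your approach buys is self-containment and transparency---one sees immediately that nothing in the classical proof uses injectivity of the labeling. What the paper's reduction buys is economy across the section: the same auxiliary network $\mN'$, with $\delta_{\mN'}=\delta_G$ (Lemma~\ref{lem:defNG}), is re-used to derive Theorem~\ref{prop:k} from the complex-balanced result of \cite{Craciun-Sturmfels}, so a single construction yields both theorems without repeating any analysis.

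One small correction: $\mP_{x_0}$ need not be compact (take a network containing $0\to X_1\to 0$), so you cannot simply ``minimize $L$ over the compact convex set $\mP_{x_0}$.'' Replace ``compact'' by ``closed'' and observe that $L$ is coercive on $\R^n_{\ge0}$, since each summand $x_i\ln(x_i/x^*_i)-x_i+x^*_i\to\infty$ as $x_i\to\infty$; hence the sublevel sets of $L$ restricted to $\mP_{x_0}$ are compact and the minimum is attained. Your boundary argument showing the minimizer is interior, and the strict-convexity uniqueness argument, then go through unchanged.
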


The next main result tells us that there are $\delta_G$ equations in $\k\in \R^p_{>0}$ that determine whether $(\mN,G,\k)$ is node balanced or not.
The link between deficiency and the existence of complex balanced steady states was first explored by Horn and Feinberg \cite{horn,feinberg-balance} (see also \cite{feinberg-def0}, and \cite{Feinberg:1989vg,Dickenstein:2011p1112} concerning detailed balancing). We follow the approach of \cite{Craciun-Sturmfels} for complex balanced steady states,  which applies to general reaction graphs.

For a node $i\in V_G$, let $\Theta_{G,i}$ be the set of spanning trees of the connected component $i$  belongs to, rooted at $i$ (that is,  $i$ is the only node with no outgoing edge).
Given such a tree $\tau$, let $\k^\tau$ be the product of the reaction  rate constants corresponding to the edges of  $\tau$.
Define
\begin{equation}\label{eq:KG}
 K_{G,i}  = \sum_{\tau \in \Theta_{G,i} } \k^\tau
\qquad\textrm{and}\qquad K_G = (K_{G,1},\dots,K_{G,m_G}).
 \end{equation}

The following theorem is a consequence of \cite[Theorem 9]{Craciun-Sturmfels}, adapted to our setting. 
Recall that the kernel of the map $\Psi_G$ given in \eqref{eq:chiG}  has dimension $\delta_G$ (Proposition~\ref{lem:defkernel}).

\begin{theorem}[{\bf Existence of node balanced steady states}]\label{prop:k}
Let $\mN$ be a reaction network, $\k\in \R^p_{>0}$, and $G$ an  associated weakly reversible reaction graph. Let $u_1,\dots,u_{\delta_{G}}\in \Z^{m_G}$ be a basis of $\ker \Psi_G$.  Then the following holds.
\begin{enumerate}[(i)]
\item $(\mN,G,\k)$ is node balanced if and only if
$$ (K_G)^{u_i} =\prod_{j=1}^{m_G} K_{G,j}^{u_{ij}} = 1, \qquad \textrm{for all }\quad i=1,\dots,\delta_G.$$
\item 
$x\in \R^n_{>0}$ is a node balanced steady state of $\mN$ with respect to $G$ if and only if 
$$ K_{G,i} x^{Y_i} - K_{G,j} x^{Y_j}=0 \qquad \textrm{for all }\quad (i,j)\in E_G.$$
\end{enumerate}
\end{theorem}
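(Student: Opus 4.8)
I would prove this theorem by reducing it to the result of Craciun--Sturmfels for the complex reaction graph, using the structure developed in the earlier sections. The reaction graph $G$ (weakly reversible) together with mass-action kinetics and rate constants $\k$ gives a linear system $C_G v(x) = 0$. The key observation is that this is exactly the ``complex balancing'' system for the \emph{digraph} $G$ itself, if we pretend each node $i$ of $G$ carries a formal ``species-monomial'' $x^{Y_i}$. So I would first recast the node balancing equations: writing $v_{Y_i\to Y_j}(x) = \k_{i\to j} x^{Y_i}$, equation \eqref{eq:node-condition} becomes a linear balance on the weighted digraph $G$ with edge weights $\k_{i\to j} x^{Y_i}$ flowing out of node $i$. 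This is precisely the setting of the Matrix--Tree theorem for weighted digraphs: the space of solutions $\rho = (\rho_1,\dots,\rho_{m_G})$ to $\rho_i$ being ``node balanced'' (i.e. $\sum_j \k_{i\to j}\rho_i = \sum_j \k_{j\to i}\rho_j$) is, because $G$ is weakly reversible, spanned within each connected component by the vector with $i$-th coordinate $K_{G,i}$ as defined in \eqref{eq:KG}. I would cite this as the standard tree-constant formula (as in \cite{Craciun-Sturmfels}, or Feinberg's notes), adapted from the complex reaction graph to an arbitrary reaction graph --- the adaptation is purely formal since nowhere does the Matrix--Tree argument use that the node labels $Y_i$ are distinct.

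\textbf{Deriving (ii) from the tree formula.} Once we know that $x\in\R^n_{>0}$ satisfies $C_G v(x)=0$ if and only if the vector $(\k_{1\to\bullet}x^{Y_1},\dots)$-induced flow is balanced, the Matrix--Tree characterization says this holds if and only if $(x^{Y_1},\dots,x^{Y_{m_G}})$ is, componentwise on each connected component, proportional to $(K_{G,1},\dots,K_{G,m_G})$. That proportionality, component by component along every edge $i\to j$, is exactly the statement $K_{G,j} x^{Y_i} = K_{G,i} x^{Y_j}$ for all $(i,j)\in E_G$ --- the proportionality constant cancels. Conversely if those relations hold along all edges, then within a connected component $x^{Y_i}/K_{G,i}$ is constant, so the flow is balanced. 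This gives (ii) directly, modulo checking that the two formulations of proportionality (``on each component'' vs. ``along each edge'') agree, which follows from connectedness.

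\textbf{Deriving (i) from (ii).} For (i): $(\mN,G,\k)$ is node balanced iff there exists a positive $x$ with $K_{G,i}x^{Y_i} = K_{G,j}x^{Y_j}$ for all edges. Taking logarithms, setting $z = \log x \in \R^n$ and $b_i = \log K_{G,i}$, the edge relations become the linear system $\langle Y_i - Y_j, z\rangle = b_i - b_j$, i.e. $C_G^{T} Y^{T} z = C_G^{T} b$ (reading $b$ as indexed by nodes). Solvability of this linear system in $z$ is equivalent to $C_G^T b$ lying in the image of $C_G^T Y^T$, equivalently $b \perp \ker(Y C_G) \cap \ker(\text{component-indicator map})$... more cleanly: $b$ pairs to zero with everything in $\ker\Psi_G$. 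Indeed a standard duality computation (the image of $Y$ restricted to $\im C_G$ is the orthogonal complement of $\ker Y \cap \im C_G = \ker\Psi_G$ inside $\im C_G$, by Proposition~\ref{lem:defkernel}) shows the system is solvable iff $\langle b, u_i\rangle = 0$ for each basis vector $u_i$ of $\ker\Psi_G$. Exponentiating, $\langle b, u_i\rangle = 0$ reads $(K_G)^{u_i} = 1$, giving exactly the $\delta_G$ stated conditions. I would write out this linear-algebra duality carefully, as it is the real content of (i): one must check that the constraints coming from $\ker C_G^T$ (the component indicators $\omega_j$) are automatically satisfied by $b$ --- but $\langle \omega_j, b\rangle$ appearing is harmless because $\omega_j \in \ker\Psi_G^\perp$ handling is folded in by working modulo $\im C_G$, or one notes the $\omega_j$ contribute relations of the form $\prod_{i\in G_j}K_{G,i}$, which are not forced; the cleanest route is to observe the edge-relations only ever constrain $b$ through $C_G^T$, so only the component of $b$ orthogonal to $\ker C_G^T$ matters, and on that component solvability is governed precisely by $\ker Y\cap\im C_G$.

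\textbf{Main obstacle.} The genuinely delicate point is the duality argument in (i): correctly identifying which linear functionals on the node-space $\R^{m_G}$ obstruct solvability of $\langle Y_i-Y_j,z\rangle = b_i-b_j$, and confirming the obstruction space is exactly $\ker\Psi_G$ rather than something larger or smaller. One must juggle three subspaces of $\R^{m_G}$ --- $\im C_G$, $\ker Y$, and the span of component indicators $\omega_j$ --- and the fact (Proposition~\ref{lem:defkernel}) that $\ker\Psi_G = \ker Y\cap\im C_G$ is what makes the count come out to $\delta_G$. Everything else (the Matrix--Tree formula, connectedness bookkeeping, log/exp translation) is routine once that skeleton is in place. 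I would also need to be slightly careful that the $u_i$ can be taken in $\Z^{m_G}$ so that $(K_G)^{u_i}$ makes sense as a Laurent monomial --- but this is immediate since $\ker\Psi_G$ is the kernel of an integer matrix, hence has an integer basis.
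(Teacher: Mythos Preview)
Your approach is correct and is \emph{genuinely different} from the route the paper takes. The paper does not reprove the Craciun--Sturmfels/Matrix--Tree argument for $G$; instead it constructs an auxiliary network $\mN'$ on the enlarged species set $\mS\times V_G$, equips it with extra ``mixing'' reactions $\epsilon(X_i,j)\rightleftarrows\epsilon(X_i,j')$ with unit rate constants, and shows (Lemma~\ref{lem:node-complex-steady-state}) that positive node balanced steady states of $(\mN,G)$ correspond bijectively, via the diagonal embedding $g$, to positive complex balanced steady states of $\mN'$. The deficiency computation (Lemma~\ref{lem:defNG}) gives $\delta_{\mN'}=\delta_G$, and an explicit isomorphism $\rho\colon\ker\Psi'\to\ker\Psi_G$ is exhibited, after which the cited results for $\mN'$ are transported back to $\mN$. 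The paper in fact acknowledges at the start of Section~\ref{sec:proof} that your direct route---running the complex-balancing argument verbatim on $G$ since distinctness of node labels is never used---would work equally well; they simply chose the auxiliary-network reduction instead.

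What each approach buys: yours is shorter and more transparent, and the log-linear duality you sketch (solvability of $C_G^T(Y^Tz-b)=0$ in $z$ is obstructed exactly by $(\im Y^T + \ker C_G^T)^\perp=\ker Y\cap\im C_G=\ker\Psi_G$) is the clean way to see why precisely $\delta_G$ conditions appear. The paper's construction is more indirect but has the virtue of treating the Craciun--Sturmfels theorem as a black box and, more importantly, the \emph{same} auxiliary network $\mN'$ is reused to prove Theorem~\ref{thm:complex} (uniqueness and stability), where a purely algebraic argument would not suffice.

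Two small points to tighten in your write-up. First, your muddled passage about the component indicators $\omega_j$ resolves cleanly once you phrase solvability as $b\in\im Y^T+(\im C_G)^\perp$, whose orthogonal complement is exactly $\ker Y\cap\im C_G$; no separate handling of the $\omega_j$ is needed. Second, the proportionality you derive, $K_{G,j}x^{Y_i}=K_{G,i}x^{Y_j}$, has the indices transposed relative to the theorem's stated form $K_{G,i}x^{Y_i}=K_{G,j}x^{Y_j}$; this does not affect part~(i) (the obstruction space is the same either way since it is a linear subspace), but you should reconcile your Matrix--Tree convention with the one used in the statement before declaring part~(ii) done.
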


By Theorem~\ref{prop:k}(i), there are $\delta_G$  relations in the reaction  rate constants for which $(\mN,G,\k)$ is node balanced. In fact, more is true. As proven in \cite{Craciun-Sturmfels} for complex balanced steady states, these $\delta_G$ equations imply 
  that the set of reaction  rate constants $\k\in \R^p_{>0}$ for which a positive node balanced steady state exists forms a positive variety of codimension $\delta_G$ in $\R^p_{>0}$.  The conditions given in (i) for complex balanced or detailed balanced steady states are equivalent to the conditions given in Proposition 1 and 4 of \cite{Dickenstein:2011p1112}.

The next result  tells us how to obtain the relations in Theorem~\ref{prop:k}(i) from the relations  for complex balanced steady states.

\begin{theorem}[{\bf Node balancing and inclusion of reaction graphs}]\label{thm:inclusion}
Let $G',G$ be two weakly reversible reaction graphs such that $G'\preceq G$ and $m_{G} = m_{G'} +1$. Further, consider the associated morphism $\varphi\colon G\rightarrow G'$ in Lemma~\ref{lem:morphism-inclusion}, and let   $i_1,i_2$ be the only pair of nodes of $G$ such that $\varphi(i_1)=\varphi(i_2)=k$ with $k\in V_{G'}$.  Let $\k\in \R^p_{>0}$.

\begin{enumerate}[(i)]
\item If $i_1,i_2$ do not belong to the same connected component of $G$, then $(\mN,G,\k)$ is node balanced if and only if $(\mN,G',\k)$ is.
\item If $i_1,i_2$ belong to the same connected component of $G$, then $(\mN,G,\k)$ is node balanced if and only if $(\mN,G',\k)$ is node balanced and further 
$K_{G,i_1}=K_{G,i_2}$.
\end{enumerate}
\end{theorem}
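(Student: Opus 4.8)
The plan is to treat the two ``only if'' implications first, as they are immediate, and then reduce both ``if'' implications to showing that a \emph{single} positive node balanced steady state $x^*$ for $G'$ is already node balanced for $G$. If $(\mN,G,\k)$ is node balanced, a positive node balanced steady state $x^*$ for $G$ is, by Proposition~\ref{prop:nodeinclusion}(ii), also node balanced for $G'$; this settles one direction of~(i). For~(ii) it moreover yields $K_{G,i_1}=K_{G,i_2}$: since $i_1,i_2$ lie in a common connected component of $G$ they are linked by a directed path, along which the equalities of Theorem~\ref{prop:k}(ii) together with $Y_{i_1}=Y_{i_2}$ (hence $(x^*)^{Y_{i_1}}=(x^*)^{Y_{i_2}}$) force the two tree constants to coincide. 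For the converse implications, let $B$ be the matrix with $C_{G'}=BC_G$ exhibited in the proof of Proposition~\ref{prop:incidence}(iii); since $\varphi$ identifies exactly the nodes $i_1,i_2$, one has $\ker B=\langle e_{i_1}-e_{i_2}\rangle$, with $e_i$ the $i$-th standard basis vector of $\R^{m_G}$. From $B\,C_Gv(x^*)=C_{G'}v(x^*)=0$ we obtain $C_Gv(x^*)=t\,(e_{i_1}-e_{i_2})$ for a scalar $t$, and everything comes down to proving $t=0$.

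In case~(i), $i_1$ lies in a connected component $\Gamma_1$ of $G$ with $i_2\notin\Gamma_1$; since no edge joins distinct components, the indicator row vector $\mathbf 1_{\Gamma_1}$ satisfies $\mathbf 1_{\Gamma_1}C_G=0$, so $0=\mathbf 1_{\Gamma_1}C_Gv(x^*)=t\,\mathbf 1_{\Gamma_1}(e_{i_1}-e_{i_2})=t$, and we are done.

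In case~(ii), $i_1,i_2$ lie in a common component $\Gamma$ of $G$, which is strongly connected because $G$ is weakly reversible; $\varphi$ contracts $\Gamma$ to the component $\Gamma'=\varphi(\Gamma)$ of $G'$ by identifying $i_1,i_2$ to one node $k$. Restricting $C_Gv(x^*)$ to $\Gamma$ reads $L_\Gamma w=t\,(e_{i_1}-e_{i_2})$, where $w_i=(x^*)^{Y_i}$ and $L_\Gamma$ is the transposed weighted Laplacian of $\Gamma$ (off-diagonal entry $\k_{b\to i}$ in position $(i,b)$, minus the total outgoing rate on the diagonal); by the matrix–tree theorem $\ker L_\Gamma$ is the line spanned by the positive vector $\rho=(K_{G,i})_{i\in\Gamma}$. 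As $x^*$ is node balanced for $G'$, the vector $w$ coincides with the lift $\eta$ to $V_\Gamma$ of $w'=\big((x^*)^{Y'_{j'}}\big)_{j'\in\Gamma'}\in\ker L_{\Gamma'}$, i.e.\ $\eta_i=w'_{\varphi(i)}$ and in particular $\eta_{i_1}=\eta_{i_2}=w'_k$ (using $Y_i=Y'_{\varphi(i)}$). Comparing, for each $i\neq i_1,i_2$, the stationarity equation of $w'$ at the node $i$ of $\Gamma'$ with that of $\eta$ at the node $i$ of $\Gamma$ — the edges at $i$ and their rates agree in $\Gamma$ and $\Gamma'$ because $\varphi$ is injective on edges, so $\Gamma'$ carries no multiple edges — shows $(L_\Gamma\eta)_i=0$ for all $i\neq i_1,i_2$, consistently with $L_\Gamma\eta=t\,(e_{i_1}-e_{i_2})$.

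Here the hypothesis $K_{G,i_1}=K_{G,i_2}$ enters: it says $\rho_{i_1}=\rho_{i_2}$, i.e.\ $e_{i_1}-e_{i_2}$ is orthogonal to $\ker L_\Gamma=\langle\rho\rangle$, hence $e_{i_1}-e_{i_2}\in\im L_\Gamma^{\mathsf T}$ and one may choose $z\in\R^{V_\Gamma}$ with $L_\Gamma^{\mathsf T}z=e_{i_1}-e_{i_2}$. Pairing,
\[
t\,(z_{i_1}-z_{i_2})=\big(t(e_{i_1}-e_{i_2})\big)\cdot z=(L_\Gamma\eta)\cdot z=\eta\cdot(L_\Gamma^{\mathsf T}z)=\eta_{i_1}-\eta_{i_2}=0 ,
\]
while a discrete maximum principle on $\Gamma$ forces $z_{i_1}\neq z_{i_2}$: away from $\{i_1,i_2\}$ the equation makes each $z_a$ a weighted average of the values of $z$ at the out-neighbours of $a$, and $i_1$ is a strict source, so if $z_{i_1}=z_{i_2}$ the set on which $z$ attains its maximum would be a non-empty proper subset of $V_\Gamma$ closed under passing to out-neighbours, impossible in a strongly connected graph. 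Hence $t=0$, $C_Gv(x^*)=0$, and the positive $x^*$ is node balanced for $G$. I expect the main obstacle to be precisely this last step in case~(ii) — recognising that $K_{G,i_1}=K_{G,i_2}$ is exactly the Fredholm solvability condition for the adjoint equation $L_\Gamma^{\mathsf T}z=e_{i_1}-e_{i_2}$, and then establishing $z_{i_1}\neq z_{i_2}$; the verification that $(L_\Gamma\eta)_i$ vanishes off $\{i_1,i_2\}$ is routine bookkeeping once the node contraction is handled carefully.
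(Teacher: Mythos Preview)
Your proof is correct and follows a genuinely different route from the paper's.

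The paper argues entirely at the level of the algebraic conditions $K_G^{u}=1$, $u\in\ker\Psi_G$, from Theorem~\ref{prop:k}(i). For case~(i) it builds explicit linear isomorphisms $\alpha,\beta$ between $\ker\Psi_G$ and $\ker\Psi_{G'}$ and checks $K_{G'}^{\,u}=K_G^{\,\beta(u)}$ by a spanning-tree argument. For case~(ii) it shows that $\{\gamma(u_1),\dots,\gamma(u_{\delta_{G'}}),\,e_{i_1}-e_{i_2}\}$ is a basis of $\ker\Psi_G$ and then proves the identity $K_G^{\,\gamma(u)}=K_{G'}^{\,u}$ (under $K_{G,i_1}=K_{G,i_2}$) using the All-Minors Matrix--Tree theorem together with a Pl\"ucker relation on minors of the Laplacian---a fairly heavy piece of algebra.

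You instead work directly with a single positive steady state $x^*$ and the factorisation $C_{G'}=BC_G$. Because $\ker B=\langle e_{i_1}-e_{i_2}\rangle$, node balance for $G$ reduces to killing a single scalar $t$. In case~(i) the indicator of the component of $i_1$ does this immediately. In case~(ii) you recognise $K_{G,i_1}=K_{G,i_2}$ as exactly the Fredholm condition $e_{i_1}-e_{i_2}\perp\ker L_\Gamma$ making $L_\Gamma^{\mathsf T}z=e_{i_1}-e_{i_2}$ solvable, pair with $z$ using $w_{i_1}=w_{i_2}$ (same label), and finish with a discrete maximum principle to force $z_{i_1}\neq z_{i_2}$. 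The ``only if'' direction of~(ii) via Theorem~\ref{prop:k}(ii) along a path is also cleaner than the paper's detour through the kernel bases.

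What each approach buys: the paper's route is more explicit---it exhibits the extra condition as literally the equation $K_G^{e_{i_1}-e_{i_2}}=1$ attached to a concrete basis element of $\ker\Psi_G$, and the identity $K_G^{\,\gamma(u)}=K_{G'}^{\,u}$ has independent computational value for building the node-balance conditions iteratively (cf.\ Corollary~\ref{cor:node}(ii)). Your route is substantially more elementary: it needs only the one-dimensionality of $\ker L_\Gamma$ from the standard Matrix--Tree theorem, and avoids the All-Minors theorem and the Pl\"ucker relations entirely. Two minor remarks: the paragraph verifying $(L_\Gamma\eta)_i=0$ off $\{i_1,i_2\}$ is redundant, since you already have $C_Gv(x^*)=t(e_{i_1}-e_{i_2})$ globally; and the maximum-principle step uses implicitly that every node of $\Gamma$ has an outgoing edge, which holds because $\Gamma$ is strongly connected.
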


We obtain the following corollary.

\begin{corollary}\label{cor:node}
Let $G'\preceq G$ be two weakly reversible reaction graphs associated with a reaction network.
Assume mass-action kinetics with $\k\in \R^p_{>0}$.

\begin{enumerate}[(i)]
\item 
If $\delta_G = \delta_{G'}$, then
$x^*$ is a positive node balanced steady state with respect to $G'$ if and only if it  is a positive node balanced steady state with respect to $G$.

\item Any set of $\delta_{G'}$ equations in $\k$  for the existence of positive node balanced steady states with respect to $G'$ (as in Theorem~\ref{prop:k}) can be extended to a set of equations in $\k$ for the existence of positive node balanced steady states with respect to $G$ by adding $\delta_G-\delta_{G'}$ equations.
\end{enumerate}
\end{corollary}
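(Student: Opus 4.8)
The plan is to deduce both parts from Theorem~\ref{thm:inclusion} together with Proposition~\ref{prop:deficiency} and Proposition~\ref{prop:onebyone}, essentially by telescoping along a chain of one-node-at-a-time inclusions. First I would invoke Proposition~\ref{prop:onebyone}(ii) to write $G' = G_0 \preceq G_1 \preceq \dots \preceq G_k = G$ with $m_{G_i} = m_{G_{i-1}} + 1$, so that each step is governed by Theorem~\ref{thm:inclusion}. At step $i$, let $\varphi_i$ be the associated morphism and $i_1,i_2$ the unique collapsed pair; Proposition~\ref{prop:deficiency}(i) tells us that $\delta_{G_i} = \delta_{G_{i-1}}$ precisely when $i_1,i_2$ lie in different connected components of $G_i$ (case (i) of Theorem~\ref{thm:inclusion}), and $\delta_{G_i} = \delta_{G_{i-1}}+1$ precisely when they lie in the same component (case (ii)).

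For part (i), the hypothesis $\delta_G = \delta_{G'}$ forces $\delta_{G_i} = \delta_{G_{i-1}}$ at every step, so every step falls under Theorem~\ref{thm:inclusion}(i). Thus $(\mN,G_i,\k)$ is node balanced iff $(\mN,G_{i-1},\k)$ is, and chaining these equivalences gives that $(\mN,G,\k)$ is node balanced iff $(\mN,G',\k)$ is. It remains to identify the \emph{sets} of positive node balanced steady states. Here I would use Theorem~\ref{prop:k}(ii): a positive $x$ is node balanced with respect to $G_i$ iff $K_{G_i,a}x^{Y_a} = K_{G_i,b}x^{Y_b}$ for all edges $(a,b)\in E_{G_i}$. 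When $i_1,i_2$ are in different components, collapsing them merely identifies two nodes whose incident edges carry the same "potentials" $K x^{Y}$ (this is exactly what case (i) of Theorem~\ref{thm:inclusion} encodes), so the edge equations for $G_{i-1}$ and $G_i$ cut out the same positive solution set; alternatively, since $\delta_{G_i}=\delta_{G_{i-1}}$ the two kernels $\ker\Psi_{G_i}$ and $\ker\Psi_{G_{i-1}}$ have equal dimension and one checks the node balancing varieties coincide. Telescoping yields that the positive node balanced steady states with respect to $G$ and with respect to $G'$ coincide.

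For part (ii), start from a set $E'$ of $\delta_{G'}$ equations in $\k$ characterizing node balancing with respect to $G'$ (Theorem~\ref{prop:k}(i) applied to $G'$). Walking up the chain $G_0\preceq\dots\preceq G_k=G$, at each step where $\delta$ strictly increases (by exactly $1$, by Proposition~\ref{prop:deficiency}(i)) Theorem~\ref{thm:inclusion}(ii) says the node balancing condition for $G_i$ is the conjunction of that for $G_{i-1}$ and the single extra equation $K_{G_i,i_1} = K_{G_i,i_2}$; at steps where $\delta$ is unchanged, Theorem~\ref{thm:inclusion}(i) adds nothing. There are exactly $\delta_G - \delta_{G'}$ strict-increase steps, so we append exactly $\delta_G - \delta_{G'}$ new equations to $E'$, obtaining a set characterizing node balancing with respect to $G$. (If one wants the extended set to have exactly the shape of Theorem~\ref{prop:k}(i) for $G$, one notes both describe the same codimension-$\delta_G$ variety, but the statement only asks for "a set of equations", so the telescoped list suffices.)

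The main obstacle I anticipate is the bookkeeping in part (i) to show the solution sets — not just the node-balanced property of $(\mN,G,\k)$ — are literally equal. Theorem~\ref{thm:inclusion}(i) is phrased in terms of $(\mN,G,\k)$ being node balanced, i.e., existence of a positive steady state, not in terms of which $x^*$ work; so I need the extra observation, via Theorem~\ref{prop:k}(ii), that when $i_1,i_2$ lie in different components the constants satisfy $K_{G_i,i_1} = K_{G_i,i_2}$ is vacuous and the $x$-equations match up edge by edge. This is where a careful look at how $K_{G_i}$ relates to $K_{G_{i-1}}$ under the collapse is needed; everything else is formal chaining.
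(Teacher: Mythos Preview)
Your argument for part (ii) matches the paper's exactly: chain along one-node collapses and append $K_{G_i,i_1}=K_{G_i,i_2}$ at the $\delta_G-\delta_{G'}$ steps where the deficiency jumps.

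For part (i), you correctly identified the obstacle---Theorem~\ref{thm:inclusion}(i) only transfers the \emph{existence} of a positive node balanced steady state, not the identity of \emph{which} $x^*$ are node balanced---but you overlooked the simplest resolution, which is the one the paper uses. Once your chaining gives that $(\mN,G,\k)$ is node balanced, invoke Theorem~\ref{thm:complex}: it says that then \emph{every} positive steady state of $\mN$ is node balanced with respect to $G$. Since $x^*$ is a positive steady state (it is node balanced with respect to $G'$, hence a steady state by Proposition~\ref{prop:incidence}(i)), it is in particular node balanced with respect to $G$. The reverse implication is just Proposition~\ref{prop:nodeinclusion}(ii). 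No edge-equation bookkeeping, no comparison of the constants $K_{G_i}$ and $K_{G_{i-1}}$, is needed.

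Your alternative route through Theorem~\ref{prop:k}(ii) can be made to work, but it requires the explicit relation between $K_{G_i}$ and $K_{G_{i-1}}$ under a collapse of nodes in different components (namely $K'_{\varphi(j)}=K_j$ outside the two merged components, and $K'_{\varphi(j)}=K_j K_{i_2}$ or $K_j K_{i_1}$ inside them). That relation lives inside the \emph{proof} of Theorem~\ref{thm:inclusion}(i), not its statement, so you would effectively be reproving part of that theorem. The paper's use of Theorem~\ref{thm:complex} bypasses all of this in one line.
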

\begin{proof}
(i) The reverse implication is Proposition~\ref{prop:nodeinclusion}(ii). If $x^*$ is a positive node balanced steady state with respect to $G'$, 
then by Theorem~\ref{thm:inclusion}(i) and Proposition~\ref{prop:deficiency}(i), $(\mN,G,\k)$ is also node balanced. By Theorem~\ref{thm:complex}, this means that all positive steady states of $\mN$ with reaction  rate constants $\k$ are node balanced with respect to $G$, thus in particular $x^*$ is.

(ii) It follows from  Theorem~\ref{thm:inclusion}(ii) and Proposition~\ref{prop:deficiency}(iii).
\end{proof}

As a consequence of Corollary~\ref{cor:node} and Proposition~\ref{prop:deficiency}(i),  we recover the following well-known result relating complex and detailed balanced steady states \cite{Feinberg:1989vg,Dickenstein:2011p1112}. Assume the network is reversible. If a complex reaction graph has no simple cycles (that is, with no repeated nodes) other than those given by pairs of reversible reactions, then  a steady state is complex balanced if and only if it is detailed balanced.

By letting $G'$ be a complex reaction graph in Corollary~\ref{cor:node}(ii),  we conclude that the equations for the existence of positive complex balanced steady states can be extended to equations for the existence of positive node balanced steady states  for any weakly reversible reaction graph. The extra conditions, obtained by applying  Theorem~\ref{thm:inclusion}(ii) iteratively in conjunction with Proposition~\ref{prop:deficiency}, generalize  the so-called cycle conditions \cite{Feinberg:1989vg}, or formal balancing conditions \cite{Dickenstein:2011p1112}, that relate conditions for the existence of complex and detailed balancing steady states.

If $\delta_G=\delta_{G'}$ but neither $G\preceq G'$ nor $G'\preceq G$, all possibilities might occur. The sets of reaction  rate constants $\k\in\R^p_{>0}$ for which $(\mN,G,\k)$ and $(\mN,G',\k)$ are node balanced can be disjoint, have a proper intersection or coincide. Instances of the first two cases are given in Example~\ref{ex:equations_differ} and Example~\ref{ex:no_intersection} below. For what concerns the latter, we have the following corollary, which is a consequence of Theorem~\ref{thm:inclusion}.

\begin{corollary}\label{cor:coincide} 
Let $G,G'$ be two weakly reversible reaction graphs with the same deficiency $\delta_G=\delta_{G'}$.  If either of the following conditions are fulfilled,  
\begin{enumerate}[(i)]
\item  $\delta_G = \delta_{[G]\wedge [G']}$.
\item  $[G]\vee [G']$ is weakly reversible and $\delta_G = \delta_{[G]\vee [G']}$,
\end{enumerate}
then $(\mN,G,\k)$ is node balanced if and only if $(\mN,G',\k)$ is.
\end{corollary}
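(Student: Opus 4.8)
The plan is to reduce Corollary~\ref{cor:coincide} to a chain of applications of Theorem~\ref{thm:inclusion} along the lattice, using Proposition~\ref{prop:onebyone}(ii) to decompose the relevant inclusions into one-node-at-a-time steps and Proposition~\ref{prop:deficiency} to control how the deficiency changes along the way. The key observation is that in both cases (i) and (ii) there is a third reaction graph $H$ — namely one in $[G]\wedge[G']$ in case (i), or one in $[G]\vee[G']$ in case (ii) — with the \emph{same} deficiency as $G$ and $G'$, and with a comparability relation to both $G$ and $G'$. So the strategy is: first prove the statement in the special case where one graph is included in the other with equal deficiency (this is essentially Corollary~\ref{cor:node}(i)), and then use $H$ as a bridge.

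In detail: for case (i), pick $H\in[G]\wedge[G']$, so $H\preceq G$ and $H\preceq G'$. By Proposition~\ref{prop:deficiency}(ii), $\delta_H\le\delta_G$ and $\delta_H\le\delta_{G'}$; combined with the hypothesis $\delta_G=\delta_{G'}=\delta_{[G]\wedge[G']}=\delta_H$, all three deficiencies coincide. Now I would show that $(\mN,G,\k)$ is node balanced iff $(\mN,H,\k)$ is, and likewise for $G'$ and $H$, hence iff $(\mN,G',\k)$ is. The equivalence "$(\mN,G,\k)$ node balanced $\iff$ $(\mN,H,\k)$ node balanced" when $H\preceq G$ and $\delta_H=\delta_G$ follows by decomposing $H=G_0\preceq G_1\preceq\cdots\preceq G_k=G$ one node at a time via Proposition~\ref{prop:onebyone}(ii); since $\delta_{G_{i-1}}\le\delta_{G_i}$ at each step (Proposition~\ref{prop:deficiency}(ii)) and the endpoints have equal deficiency, every step must have $\delta_{G_{i-1}}=\delta_{G_i}$, so by Proposition~\ref{prop:deficiency}(i) the two merged nodes lie in distinct connected components of $G_i$, and Theorem~\ref{thm:inclusion}(i) gives the equivalence at each step. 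Chaining these yields the claim. Note one must check weak reversibility is preserved: by Proposition~\ref{prop:incidence}(iv), $H\preceq G$ with $G$ weakly reversible forces all intermediate $G_i$ weakly reversible, so Theorem~\ref{thm:inclusion} applies at each step.

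For case (ii), pick $H\in[G]\vee[G']$; now $G\preceq H$ and $G'\preceq H$, we are given $H$ is weakly reversible, and $\delta_H=\delta_G=\delta_{G'}$. Then the same one-node-at-a-time argument runs in the other direction: decompose $G\preceq H$ into single-node merges $G=G_0\preceq\cdots\preceq G_k=H$; as before equal endpoint deficiencies force equal deficiency at each step, each merge is of nodes in distinct components, each $G_i$ is weakly reversible (being included in the weakly reversible $H$, Proposition~\ref{prop:incidence}(iv)), and Theorem~\ref{thm:inclusion}(i) gives the equivalence $(\mN,G_{i-1},\k)$ node balanced $\iff$ $(\mN,G_i,\k)$ node balanced. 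Hence $(\mN,G,\k)$ node balanced $\iff$ $(\mN,H,\k)$ node balanced $\iff$ $(\mN,G',\k)$ node balanced.

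The main obstacle I anticipate is a bookkeeping one rather than a conceptual one: making sure that at every intermediate graph in the Proposition~\ref{prop:onebyone}(ii) decomposition the hypotheses of Theorem~\ref{thm:inclusion}(i) are genuinely met — in particular that weak reversibility really does descend through the whole chain and that the "equal deficiency at the endpoints forces equal deficiency at every step" argument is airtight (it relies on Proposition~\ref{prop:deficiency}(ii) being applied to each consecutive pair, and on monotonicity of $\delta$ along refinements). A minor subtlety is that Corollary~\ref{cor:node}(i) as stated concerns $G'\preceq G$; here I need the symmetric conclusion and also the $G\preceq H$ direction of case (ii), but both follow from exactly the same reasoning, so it is cleanest to isolate the lemma "if $G_1\preceq G_2$ are weakly reversible with $\delta_{G_1}=\delta_{G_2}$ then $(\mN,G_1,\k)$ is node balanced iff $(\mN,G_2,\k)$ is" once and invoke it twice in each case.
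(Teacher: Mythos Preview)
Your proof is correct and is precisely the intended argument: the paper states only that the corollary ``is a consequence of Theorem~\ref{thm:inclusion}'' without further detail, and your expansion via the bridge graph $H$ in $[G]\wedge[G']$ (respectively $[G]\vee[G']$), the one-node-at-a-time decomposition of Proposition~\ref{prop:onebyone}(ii), the deficiency monotonicity of Proposition~\ref{prop:deficiency}(ii) forcing every step to fall under Theorem~\ref{thm:inclusion}(i), and the propagation of weak reversibility through Proposition~\ref{prop:incidence}(iv) is exactly how that consequence is meant to be unpacked.
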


\begin{example}\label{ex:detailed}
Consider a reaction network with reactions $X_1\rev X_2$, $X_2\rev X_3$ and $X_3\rev  X_4$, and the following associated equivalence classes of reaction graphs (shown without the numbering of the nodes).
 \begin{center}
\begin{tabular}{rc}
$\mG_1$:&
$X_1 \rev X_2 \rev X_3 \rev X_4$
\\
 \midrule
 $\mG_2$:&
$X_1\rev X_2\qquad$ 
$X_2\rev X_3\rev X_4$
\\
 \midrule
 $\mG_3$:&
 
$X_1\rev X_2\rev X_3\qquad$
$X_3\rev X_4$
\\
 \midrule
 $\mG_4$:&
$X_1 \rev X_2\qquad$
$X_2\rev X_3\qquad$
$X_3\rev X_4$
\\
\end{tabular}
\end{center}
Neither $\mG_2\preceq \mG_3$ nor $\mG_3\preceq \mG_2$. Moreover, $\mG_2\wedge \mG_3=\mG_1$, $\mG_2\vee \mG_3=\mG_4$, and the reaction graphs in these classes have all the same deficiency. Hence, by Corollary~\ref{cor:coincide}, the sets of reaction   rate constants for which $\mN$ admits positive node balanced steady states with respect to any of the above equivalence classes coincide.
\end{example}

\paragraph{Finding $K_G$.}
We conclude this section by expanding on how to find the relations in Theorem~\ref{prop:k}(i) in practice and with further examples.  Let $G_1,\dots,G_{\ell_G}$  be the  connected components of a weakly reversible reaction graph $G$.
To determine the kernel of $\Psi_G$, we consider the associated $(n+\ell_G)\times m_G$  Cayley matrix $A_G$ \cite{Craciun-Sturmfels}. It is a block matrix with upper block $Y$ ($n\times m_G$) and lower block ($\ell_G\times m_G$), where the $i$-th row has  $1$ in entry $j$ if node $j$ belongs to component $G_i$, and otherwise is zero.
Since $A_G$ is  an integer matrix, then  $\ker(\Psi_G)$  has a basis with integer entries as well.

The form of $K_{G,i}$ is a consequence of the Matrix-Tree Theorem  \cite{Tutte-matrixtree,StanleyCombinatorics}.
Specifically, let $L_\k  \in \R^{m_G \times m_G}$ be the Laplacian of $G$ with  off-diagonal $(k,j)$-th entry equal to $\k_\ell$, if $j\rightarrow k\in E_G$ and the edge corresponds to the reaction $r_\ell$, and zero otherwise. Let 
$L_\k'$ be the submatrix of $L_\k$ obtained by selecting the rows and columns with indices in the node set of the connected component $G'$ of $G$ that contains node $i$. 
Let $(L_\k')_{(i)}$ denote the minor corresponding to the submatrix  obtained from $L_\k'$ by removing the $i$-th   column and the last row.
Then 
\begin{equation}\label{eq:Kminor}
K_{G,i} = (-1)^{i}   (L_\k')_{(i)}. 
\end{equation}
In fact $ (-1)^{i}   (L_\k')_{(i)}$ also agrees  with $(-1)^{m'+1+i+j}$ times the minor obtained from $L_\k'$ by removing the $i$-th   column and the $j$-th row, where $m'$ is the number of nodes of $G'$.

In practice, this is often the way to find $K_{G}$, rather than finding the trees. However, the description of $K_G$ in terms of trees is a useful  interpretation of  $K_G$.

\begin{myexampleHorn}\label{ex:G4:K}
The reaction graph $G_4$ in  Figure~\ref{fig:horn} has one connected component. The map $\Psi_G$ has matrix 
$$A_G = \begin{pmatrix}
3 & 1 & 0 & 2 & 3 \\
0 & 2 & 3 & 1 & 0 \\
1 & 1 & 1 & 1 & 1
\end{pmatrix}\,\in\, \Z^{3\times 5}.$$
The first two rows of $A_G$ are the column vectors encoded by the labels (complexes) of the nodes of the graph. 
Since $\delta_G=3$, we find three linearly independent vectors in the kernel  with integer entries,
$u_1 = (-1,0,0,0,1), u_2 = (-1,-1,0,2,0), u_3 = (1,-3,2,0,0).$
Furthermore, we have
$K_{G} =\big( \k_2\k_3\k_4\k_5,  \, \k_1\k_3\k_4\k_5, \,  \k_1\k_2\k_4\k_5,  \, \k_1\k_2\k_3\k_5, \,  \k_1\k_2\k_4\k_6\big).$
By Theorem~\ref{prop:k}, $(\mN,G,\k)$ is node balanced if and only if 
$$ K_{G,1}^{-1}K_{G,5} = 1, \qquad K_{G,1}^{-1}K_{G,2}^{-1}K_{G,4}^2=1,\qquad K_{G,1}K_{G,2}^{-3}K_{G,3}^2=1.$$
This reduces to  three algebraic relations
\begin{equation}\label{eq:myK}
 \k_1\k_6 = \k_3\k_5,\qquad \k_1\k_2=\k_4^2  ,\qquad \k_1\k_3^2 = \k_2^3.  
 \end{equation}
We find analogously relations for positive node balanced steady states with respect to the complex balanced reaction graph $G_1$,
\begin{equation}\label{eq:myK2}
(\k_1+\k_5)^{2}\k_2^3 =(\k_3+\k_6)^2 \k_1^3, \qquad  \k_2\k_3^2 ( \k_1+\k_5)^2 = ( \k_3+\k_6) ^{2}\k_4^2\k_1
 \end{equation}
 The relation $K_{G_4,1}=K_{G_4,5}$ is $\k_3\k_5=\k_1\k_6$. By isolating $\k_6$ from this equation and inserting it into  \eqref{eq:myK2}, we obtain after simplification the set of equations in \eqref{eq:myK}, in accordance with Theorem~\ref{thm:inclusion}(ii).
\end{myexampleHorn}

\begin{myexampleHorn}\label{ex:equations_differ}
Since $G_2\preceq G_3$ and the reaction graphs have the same deficiency (Table~\ref{fig:partition}), it follows from  Corollary~\ref{cor:node}, that a steady state is node balanced with respect to $G_2$ if and only if it is with respect to $G_3$. This implies that equation  \eqref{eq:exG3} and  \eqref{eq:exG3a}   have the same solutions.
Similarly, a positive steady state is node balanced with respect to $G_5$ if and only if it is with respect to $G_4$. In this example,  $G_2\preceq G_3$ and $G_4\preceq G_5$.

The reaction graphs $G_2$ and $G_4$ which are not related by inclusion, have  deficiency $3$. However, the values of $\k$ for which  positive node balanced steady states exist differ. As in Example~\ref{ex:G4:K}, we find the relations for $G_2$ (identical to those for $G_3$)
\begin{equation}\label{eq:myK3} 
\k_1\k_3=\k_5\k_6,\qquad \k_1\k_2\k_3^2 = \k_4^2\k_6^2,\qquad \k_2^3=\k_1\k_6^2.
 \end{equation}
These equations and the equations for $G_4$ in equation~\eqref{eq:myK} in Example~\ref{ex:G4:K} define  different sets with non-empty intersection. For example, $\k=(1,\dots,1)$ fulfils both \eqref{eq:myK} and \eqref{eq:myK3}, but $\k=(1,1,1,1,2,2)$ fulfils only \eqref{eq:myK}. Note however that the matrix $A_G$ is the same for both reaction graphs.
\end{myexampleHorn}

\begin{example}\label{ex:no_intersection} 
 Consider a reaction network with reactions $X_1\rev X_2$, $X_2\rev X_3$, $X_1\rev X_3$ and $X_2\to X_4\to X_3$,  and the associated reaction graphs, depicted in Figure~\ref{fig:intersection}.

\begin{figure}[!t]
\begin{center}
\includegraphics[scale=1]{./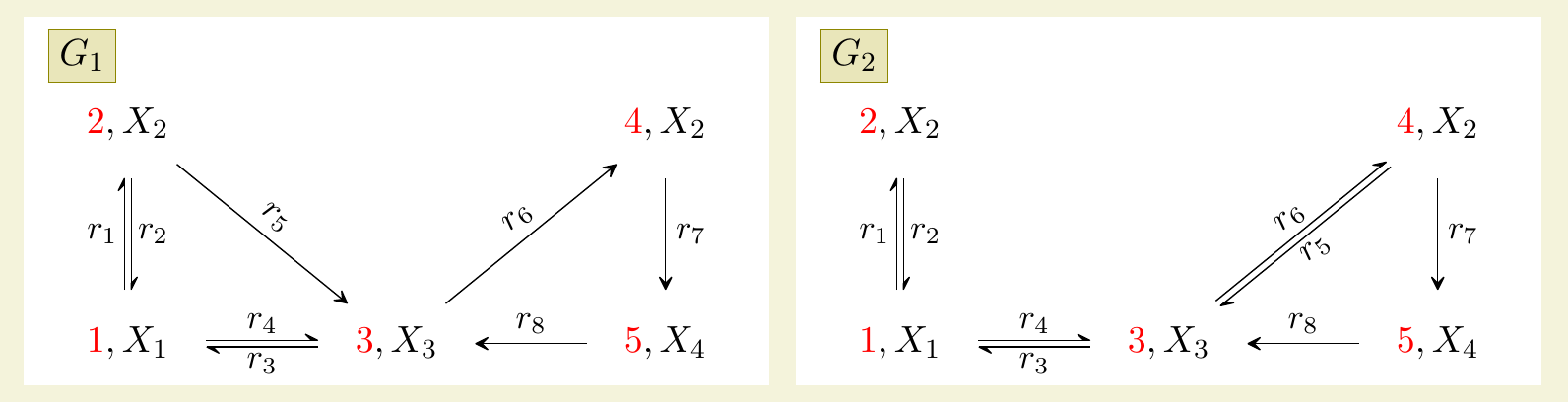}
\end{center}
\caption{Reaction graphs for the network in Example~\ref{ex:no_intersection}. } \label{fig:intersection}
\end{figure}

We have $\delta_{G_1}=\delta_{G_2}=1$, but neither  $G_1\preceq G_2$ nor $G_2\preceq G_1$. If $x^*$ were a positive node balanced steady state with respect to both $G_1$ and $G_2$,  it would follow from \eqref{eq:node-condition}, applied to node $2$, that $v_{X_2\to X_3}(x^*)=0$, which contradicts that $v(x)\in\R^p_{>0}$. 
Hence $(\mN, G_1, \kappa)$ and $(\mN, G_2, \kappa)$ cannot be node balanced for the same choice of reaction  rate constants.

 A different route to reach the same conclusion consists in finding the set of reaction rate constants $\k$ for which $(\mN, G_1, \kappa)$ and $(\mN, G_2, \kappa)$ are node balanced. The complex balanced reaction graph associated with the network has deficiency zero and thus $\mN$ is complex balanced for all $\k$. A complex balanced reaction graph can be obtained from either reaction graph,  $G_1$ or $G_2$, by joining the nodes $2$ and $4$. In view of Theorem~\ref{thm:inclusion}(ii), it is enough to find the label of the trees rooted at $2$ and $4$ for each reaction graph. We obtain that  $(\mN, G_1, \kappa)$  is node balanced if and only if 
\begin{align}
K_{G_1,2} = K_{G_1,4}  \qquad & \Longleftrightarrow \qquad   \k_1\k_3\k_7\k_8= \k_1\k_5\k_6\k_8 + \k_4\k_5\k_6\k_8+\k_2\k_4\k_6\k_8  \nonumber \\
& \Longleftrightarrow \qquad   \k_1\k_3\k_7= \k_1\k_5\k_6 + \k_4\k_5\k_6+\k_2\k_4\k_6.\label{eq:G1node}
\end{align}
Similarly,  $(\mN, G_2, \kappa)$  is node balanced if and only if 
\begin{align}
K_{G_2,2} = K_{G_2,4}  \qquad & \Longleftrightarrow \qquad     \k_1\k_3\k_7\k_8 +  \k_1\k_3\k_5\k_8 = \k_2\k_4\k_6\k_8 \nonumber \\ 
  & \Longleftrightarrow \qquad     \k_1\k_3\k_7+  \k_1\k_3\k_5 = \k_2\k_4\k_6. \label{eq:G2node}
\end{align}
By \eqref{eq:G2node}, if $(\mN, G_2, \kappa)$  is node balanced,  then $\k_1\k_3\k_7- \k_2\k_4\k_6<0$, while 
by \eqref{eq:G1node}, if $(\mN, G_1, \kappa)$  is node balanced,  then $\k_1\k_3\k_7- \k_2\k_4\k_6>0$. This shows that there does not exist $\k$ such that $\mN$ is node balanced with respect to both reaction graphs.
\end{example}

 \section{Horn and Jackson's symmetry conditions}
\label{sec:HJ}

Horn and Jackson studied steady states in general and complex and detailed balanced steady states in particular \cite{hornjackson}. They showed that there exist certain symmetry conditions on the \emph{rate matrix} such that the detailed and complex balanced steady states, and the steady states  in general,  are precisely the points fulfilling these symmetry conditions. We revisit these symmetry conditions in the light of the theory developed here.

Given a kinetics $v(x)$, the $m\times m$ \emph{rate matrix} $\rho(x)$ is such that the $(i,j)$-th entry is  $v_k(x)$ if the $k$-th reaction is $r_k\colon y_j\to y_i$, and zero otherwise. Say a function $\Omega$ on $\R^{m\times m}$  is \emph{symmetric at $x$} if
\begin{equation}\label{eq:symmetry}
\Omega(\rho(x))=\Omega(\rho^T\!(x)),
\end{equation}
where $\rho^T$ denotes the transpose matrix of $\rho$. Define the functions
$$\Omega_d(\rho(x))=\rho(x),\qquad \Omega_c(\rho(x))=\rho(x) e,\qquad \Omega_e(\rho(x))=Y_c\rho(x) e$$
where $e=(1,\ldots,1)\in\R^m$ and $Y_c$ is the labeling matrix of the canonical complex reaction graph. Then any detailed balanced steady state $x^*$ of $\mN$ is characterized by the symmetry condition  for $\Omega_d$, any complex balanced steady state $x^*$ of $\mN$ is characterized by the symmetry condition for  $\Omega_c$ and finally any steady state $x^*$ of $\mN$ is characterized by the symmetry condition for  $\Omega_e$ \cite{hornjackson}. 
We might straightforwardly extend this to node balanced steady states in the following way.

Given a reaction graph $G$, 
we define a function $\iota_G\colon \R^{m\times m}\to \R^{m_G\times m_G}$  entry-wise by $\iota_G(z)_{i_1,i_2}=z_{j_1,j_2}$ if $i_2\to i_1\in E_G$ and $R_{i_2\to i_1}=y_{j_2}\to y_{j_1}$, and $0$ otherwise.
Then $\iota_G(\rho(x))$ is  simply  the rate matrix in terms of the nodes of the reaction graph $G$. Now define
\begin{equation}\label{eq:omegaG}
\Omega_G(\rho(x))=\iota_G(\rho(x)) e_G\,\,\in\,\, \R^{m_G},
\end{equation}
where $e_G=(1,\ldots,1)\in\R^{m_G}$. The $i$-th entry  of the vector in \eqref{eq:omegaG} is the sum of the rate functions of the edges  with target $i$ in $G$. 
Using \eqref{eq:node-condition},  
 note that
 $$\Omega_G(\rho(x))-\Omega_G(\rho^T\!(x))=C_G v(x),$$
which is zero if and only if $x$ is a node balanced steady state with respect to $G$, and if and only if  the symmetry condition is fulfilled for $\Omega_G$ at $x$.

If $G$ is the canonical complex  reaction graph, then $\iota_G$ is the identity map and 
$\Omega_G(\rho(x))=\rho(x)e=\Omega_c(\rho(x)).$ If $G$ is a detailed reaction graph, then there is only one element in each row of $\iota_G(\rho(x))$. Hence the vector $\iota_G(\rho(x)) e_G$, up to a permutation of the entries, agrees with $\Omega_d(\rho(x))$. 

Horn and Jackson speculated that there would be other types of steady states fulfilling similar symmetry conditions as they verified for detailed and complex balanced steady state \cite{hornjackson}. The analysis above confirms that this is indeed the case  and that the function $\Omega_G$ in \eqref{eq:omegaG} perhaps is the natural function to study in this context.

\section{Steady states  of subnetworks}\label{sec:subgraphs}
\label{sec:part}

In this section we present an application of our theory of reaction graphs and node balanced steady states to determine 
whether a node balanced steady state of a network is also node balanced for a subnetwork, and \emph{vice versa}.

Given a reaction network $\mN=(\mS,\mC,\mR)$, the network generated by a subset of reactions $\mR'\subseteq \mR$ is called a \emph{subnetwork} of $\mN$.  Any kinetics of $\mN$ naturally induces a kinetics of  a  subnetwork.

Let $\mN_1,\dots,\mN_\ell$ be subnetworks generated by disjoint subsets of reactions
$\mR_1,\dots,\mR_\ell\subseteq \mR$,  respectively. Let $\mR_{\ell+1} = \mR \setminus \bigcup_{i=1}^\ell \mR_i$ and let $\mN_{\ell+1}$ be the subnetwork generated by $\mR_{\ell+1}$, called the \emph{complementary subnetwork}.
Observe that $\mR_1,\dots,\mR_{\ell+1}$ form a partition of $\mR$.

Consider a reaction graph $G$ associated with $\mN$. For $i=1,\dots,\ell+1$, let $G_i = ( V_i,E_i)$ be the subgraph induced by the edges corresponding to the reactions of $\mR_i$, and let $m_{G_i}$ denote the number of nodes of $G_i$. After renumbering the nodes by means of a bijection  between $\{1,\dots,m_{G_i}\}$ and $V_i$, the graph $G_i$ becomes a reaction graph associated with $\mN_i$, denoted by $G_{\mN_i}$. We say that $G_{\mN_i}$ is a \emph{reaction graph associated with $\mN_i$ induced by $G$}. That is, $G_i$ and $G_{\mN_i}$ are isomorphic digraphs, but $G_i$ is a subgraph of $G$, while $G_{\mN_i}$ is a reaction graph associated with $\mN_i$.

One might construct a new reaction graph $G'$ by taking  the disjoint union of $G_1,\dots,G_{\ell+1}$, or equivalently, the disjoint union of $G_{\mN_1},\dots,G_{\mN_{\ell+1}}$, up to a numbering of the nodes. Specifically, let $\mP$ be the partition defining $[G]$. We define a new partition $\mP'$ as follows:
$j    \sim_{\mP'}  k$ if and only if $j    \sim_{\mP}  k$ and further,  the edge involving $j$ and the edge involving $k$ in $G_{sp}$ correspond to reactions of the same subset  $\mR_i$ (equivalently, the corresponding edges of $G$ belong to the same subgraph $G_i$).

Clearly, $\mP'\leq \mP$ and any reaction graph $G'$  with partition $\mP'$ fulfills $G\preceq G'$. Any such $G'$ is called a reaction graph \emph{induced by $G$ and the subnetworks $\mN_1,\dots,\mN_\ell$.}

Let $n_i$ be the number of species of $\mN_i$ and $\pi_i$ denote the projection from $\R^n$ to $\R^{n_i}$,   obtained by selecting the indices of the species in $\mN_i$.

\begin{proposition}\label{prop:decomposition} 
Consider a reaction network $\mN$ and subnetworks $\mN_1,\dots,\mN_\ell$ with complementary subnetwork $\mN_{\ell+1}$. Let $G$ be a reaction graph, $G_{\mN_i}$   the reaction graph associated with $\mN_i$ induced by $G$, for all $i=1,\dots,\ell+1$, and $G'$ a reaction graph  induced by $G$ and the subnetworks $\mN_1,\dots,\mN_\ell$.
Assume $\mN$ is equipped with mass-action kinetics  for some choice of reaction rate constants. 

Let $x^*\in \R^n_{\geq 0}$. The following statements are equivalent:
\begin{enumerate}[(i)]
\item $x^*$ is a node balanced steady state of $\mN$ with respect to $G$,
and $\pi_i(x^*)$ is a node balanced steady state of $\mN_i$ with respect to $G_{\mN_i}$ for all $i=1,\dots,\ell$. 
\item $x^*$ is a node balanced steady state of $\mN$  with respect to $G'$.
\item $\pi_i(x^*)$ is a node balanced steady state of $\mN_i$ with respect to $G_{\mN_i}$ for all $i=1,\dots,\ell+1$. 
\end{enumerate}
\end{proposition}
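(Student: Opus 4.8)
The plan is to exploit the key structural fact that $C_{G'}$ is a block-diagonal matrix whose diagonal blocks are, up to permutation, the incidence matrices $C_{G_{\mN_i}}$ for $i=1,\dots,\ell+1$, while the rate vector $v(x^*)$ likewise splits into blocks, one for each $\mR_i$. Concretely, since $G'$ is the disjoint union of $G_1,\dots,G_{\ell+1}$, each node of $G'$ belongs to exactly one $G_i$, and each edge of $G'$ (hence each reaction) belongs to exactly one $\mR_i$. Therefore, after reordering nodes and edges according to the partition $\mR_1,\dots,\mR_{\ell+1}$, we can write $C_{G'} = \diag(C_{G_{\mN_1}},\dots,C_{G_{\mN_{\ell+1}}})$ and $v(x^*) = (v^{(1)}(x^*),\dots,v^{(\ell+1)}(x^*))$ where $v^{(i)}$ collects the rate functions of reactions in $\mR_i$. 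Moreover — and this is where mass-action kinetics is used — the rate function $v_j(x)=\k_j x^{y}$ of a reaction $y\to y'$ in $\mN_i$ depends only on the species appearing in $\mN_i$ (those with $\lambda_k\neq 0$ force $X_k\in\mS_i$), so $v^{(i)}(x^*)$ coincides with the rate vector of $\mN_i$ evaluated at $\pi_i(x^*)$. Hence $C_{G'}v(x^*)=0$ if and only if $C_{G_{\mN_i}}\,v_{\mN_i}(\pi_i(x^*))=0$ for every $i$, which is precisely the statement that (ii) $\Leftrightarrow$ (iii).

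For (i) $\Leftrightarrow$ (iii), I would argue as follows. The equivalence (ii) $\Leftrightarrow$ (iii) already gives that (ii) implies $\pi_i(x^*)$ is node balanced with respect to $G_{\mN_i}$ for all $i=1,\dots,\ell+1$, in particular for $i=1,\dots,\ell$; and since $G\preceq G'$, Proposition~\ref{prop:nodeinclusion}(ii) gives that (ii) implies $x^*$ is node balanced with respect to $G$. This shows (ii) $\Rightarrow$ (i). For the converse (i) $\Rightarrow$ (iii), the point is to recover node balancing of the complementary piece $\mN_{\ell+1}$. Being node balanced with respect to $G$ means $C_G v(x^*)=0$; restricting attention to the rows and using the block structure of the reactions, the contribution of the reactions in $\mR_i$ to the node balance equations of $G$ can be isolated once we know the individual pieces $\pi_i(x^*)$ ($i\le\ell$) are node balanced with respect to $G_{\mN_i}$. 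More precisely, I would use the matrix $B$ of Proposition~\ref{prop:incidence}(iii) with $C_G = B\,C_{G'}$ (since $G\preceq G'$, or rather its transpose version $C_{G'}=B'C_G$ — here one wants the direction going from $G'$ to $G$, which exists because $G\preceq G'$ and the morphism goes $V_{G'}\to V_G$), wait: the inclusion is $G\preceq G'$ so the morphism is $\varphi\colon V_{G'}\to V_G$ and $C_G = B C_{G'}$ for a suitable $0/1$ matrix $B$. Then $C_G v(x^*) = B C_{G'} v(x^*)$, and if $\pi_i(x^*)$ is node balanced w.r.t.\ $G_{\mN_i}$ for $i=1,\dots,\ell$, the first $\ell$ blocks of $C_{G'}v(x^*)$ vanish, so $C_G v(x^*)$ equals $B$ applied to the single remaining block coming from $\mN_{\ell+1}$; combined with $C_G v(x^*)=0$ I need to deduce that this last block is itself zero. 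This requires checking that the relevant rows of $B$ restricted to the $\mN_{\ell+1}$-columns form an injective map on the image of $C_{G_{\mN_{\ell+1}}}$ — equivalently, that collapsing nodes across distinct subnetworks cannot create cancellation, which holds because $\varphi$ restricted to the nodes of $G_{\ell+1}$ within $G'$ is still surjective onto the nodes of $G_{\ell+1}$ within $G$ and no node of $G_{\ell+1}$ is merged with a node of another $G_j$ that carries flow — actually nodes can be merged across subgraphs, so I must be more careful and instead argue directly with the node-balance equations \eqref{eq:node-condition}.

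The cleaner route for (i) $\Rightarrow$ (iii), which I would ultimately adopt, is purely combinatorial on equation \eqref{eq:node-condition}. Fix a node $k$ of $G$. The flows into and out of $k$ decompose according to which subnetwork $\mR_i$ each incident edge belongs to. Writing $\mathrm{in}_i(k)$ and $\mathrm{out}_i(k)$ for the sums of rate functions on edges of subgraph $G_i$ entering and leaving $k$ respectively, node balance of $x^*$ with respect to $G$ reads $\sum_{i=1}^{\ell+1}\bigl(\mathrm{out}_i(k)-\mathrm{in}_i(k)\bigr)=0$. Now, in $G'$ the node $k$ of $G$ has been split into nodes $k_1,\dots$, one for each subnetwork $G_i$ that uses $k$; node balance of $\pi_i(x^*)$ with respect to $G_{\mN_i}$, pulled back through the isomorphism $G_i\cong G_{\mN_i}$, is exactly $\mathrm{out}_i(k)-\mathrm{in}_i(k)=0$ for every node $k$ and every $i$. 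So assuming this for $i=1,\dots,\ell$ kills all but the $i=\ell+1$ summand in the displayed equation, forcing $\mathrm{out}_{\ell+1}(k)-\mathrm{in}_{\ell+1}(k)=0$ for every node $k$; since every node of $G_{\mN_{\ell+1}}$ arises as (the $G_{\ell+1}$-part of) some node $k$ of $G$, this says precisely that $\pi_{\ell+1}(x^*)$ is node balanced with respect to $G_{\mN_{\ell+1}}$, which is (iii). Conversely (iii) $\Rightarrow$ (i) is immediate: summing the node-balance equations over $i=1,\dots,\ell+1$ recovers node balance with respect to $G$, and the individual conditions for $i\le\ell$ are included in (iii).

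The main obstacle is bookkeeping rather than depth: one must carefully set up the three layers of graphs ($G$, the subgraphs $G_i$ sitting inside $G$, and the abstract reaction graphs $G_{\mN_i}$ associated to the subnetworks) and check that the node-balance equations transport correctly along the isomorphisms $G_i\cong G_{\mN_i}$ and that $G'$ is genuinely their disjoint union with partition $\mP'$ as defined. The one genuinely substantive point is the mass-action remark that $v^{(i)}$ depends only on $\pi_i(x^*)$, i.e.\ that the rate of a reaction in $\mN_i$ under mass-action involves only species of $\mN_i$; this is what makes ``$\pi_i(x^*)$ is a node balanced steady state of $\mN_i$'' even a meaningful statement and is where the hypothesis of mass-action kinetics (as opposed to general kinetics) is essential. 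Everything else is a linear-algebraic reorganization of $C_{G'}v(x^*)=0$ into blocks.
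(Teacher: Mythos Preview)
Your proposal is correct and follows essentially the same route as the paper: both use the block-diagonal structure of $C_{G'}$ to get (ii)$\Leftrightarrow$(iii), and both use $G\preceq G'$ for (ii)/(iii)$\Rightarrow$(i). For the remaining implication the paper argues (i)$\Rightarrow$(ii) via the matrix $B$ with $C_G=BC_{G'}$, observing that $\varphi$ is injective on each $V_{G_i}$ so that $B_{\ell+1}$ has a permutation submatrix, forcing the last block of $C_{G'}v(x^*)$ to vanish; your ``cleaner route'' for (i)$\Rightarrow$(iii) is the same argument phrased combinatorially at the level of equation~\eqref{eq:node-condition}. The injectivity you worried about in your first attempt (and implicitly use in the second, when you write ``the node $k$ of $G$ has been split into nodes $k_1,\dots$, \emph{one} for each subnetwork $G_i$ that uses $k$'') is precisely the fact the paper makes explicit: by construction of the partition $\mP'$, the morphism $\varphi$ is injective on each $V_{G_i}$, so nodes are merged only \emph{across} subnetworks, never within one. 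Your emphasis on the mass-action point ($v^{(i)}$ depends only on $\pi_i(x^*)$) matches the paper's identity $v_i(\pi_i(x^*))=\rho_i(v(x^*))$.
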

\begin{proof}
After renumbering the elements of the set of reactions,  there is a choice of $G''$ with $[G'']=[G']$ such that $C_{G''}$ is a block matrix with $\ell+1$ blocks. Since the statements are  independent of the choice of representative of the equivalence class of $G'$, we assume $G'=G''$. 

The $i$-th block of $C_{G'}$ is precisely the incidence matrix of $G_{\mN_i}$, for the induced numberings. Let $v$ be the kinetics of $\mN$ and  $v_i$ the kinetics induced on $\mN_i$.  Let $p_i$ be the cardinality of $\mR_i$, and let $\rho$ denote the projection from $\R^p$ to $\R^{p_i}$, obtained by selecting the indices of the reactions in $\mR_i$. Note that
$$ v_{i}(\pi_i(x^*)) = \rho_i( v(x^*)). $$

\medskip
(ii) $\Leftrightarrow$ (iii): 
We have  $x^*$ is node balanced with respect to $G'$ if and only if $$C_{ G_{\mN_i}} v_{i}(\pi_i(x^*)) =C_{ G_{\mN_i}} \rho_i( v(x^*))=0,\qquad \textrm{ for all }i=1,\dots,\ell+1,$$ if and only if 
$\pi_i(x^*)$ is a node balanced steady state of $\mN_i$ with respect to $G_{\mN_i}$ for all $i=1,\dots,\ell+1$. 

\medskip
(iii) $\Rightarrow$ (i):  If (iii) holds, then clearly the second part of (i) holds. Further, since we have proven (ii) $\Leftrightarrow$ (iii), $x^*$ 
is a node balanced steady state of $\mN$  with respect to $G'$, which, since $G\preceq G'$, also is a node balanced steady state of $\mN$ with respect to $G$. This shows that  (iii) implies  the first part of (i).

 \medskip
(i) $\Rightarrow$ (ii): 
If (i) holds, then $C_{G}v(x^*)=0$ and 
$$C_{G'} v(x^*)= \left( 0,\dots, 0,  C_{ G_{\mN_{\ell+1}}} v_{\ell+1}(\pi_{\ell+1}(x^*))\right),$$ 
where the zeroes cover the first $m_1+\dots+m_\ell$ entries.  
By Proposition~\ref{prop:incidence}(iii), there exists an $(m_{G}\times m_{G'})$-matrix $B$ such that $C_{G}=BC_{G'}$.
Write $B=(B_1 | \dots | B_{\ell+1})$ such that each block $B_i$ has $m_i$ columns.
Then, by hypothesis we have
\begin{equation}\label{eq:BC}
B_{\ell+1} C_{ G_{\mN_{\ell+1}}}  v_{\ell+1}(\pi_{\ell+1}(x^*))=0.
\end{equation}
Let $\varphi\colon V_{G'}\rightarrow V_G$ be the morphism defining the inclusion $G\preceq G'$.
By construction (see the proof of Proposition~\ref{prop:incidence}(iii)),   the $j$-th column of $B$ has one non-zero entry, equal to one, in the $\varphi(j)$-th row.
By definition of $G'$, $\varphi$ is injective on each $V_{G_i}$. In particular, $B_{\ell+1}$ has $m_{\ell+1}$ non-zero rows, which define a permutation matrix. Thus, \eqref{eq:BC} implies $C_{ G_{\mN_{\ell+1}}}  v_{\ell+1}(\pi_{\ell+1}(x^*))=0$. Hence $C_{G'}v(x^*)=0$.
\end{proof}

Note that if $\delta_G=\delta_{G'}$, then any node balanced steady state with respect to $G$ automatically fulfills the three equivalent statements in Proposition~\ref{prop:decomposition}.

An interesting consequence of Proposition 6 and Theorem 1 is that, given a reaction graph $G$, either all or none of the positive steady states of $\mN$ are  node balanced with respect to $G$ and fulfil that $\pi_i(x^*)$ is a node balanced steady state of $\mN_i$ with respect to $G_{\mN_i}$ for $i=1,\ldots,\ell$. Indeed, either all or none the positive steady states of $\mN$ are node balanced with respect to $G'$.

In the particular setting of complex balanced steady states, the reaction graphs  induced by a complex reaction graph are complex reaction graphs associated with the subnetworks.
Proposition~\ref{prop:decomposition} says that the set of complex balanced steady states that are also complex balanced  for a set of disjoint subnetworks agree with the set of node balanced steady states for a specific reaction graph defined from the subnetworks. Therefore, the properties of node balanced steady states derived from Theorem~\ref{thm:complex}, \ref{prop:k} and \ref{thm:inclusion} also apply in this case. In particular, positive steady states of this type can only exist if the reaction networks $\mN_1,\dots,\mN_\ell$, as well as the complementary subnetwork, are weakly reversible.  Moreover, conditions on the reaction rate constants $\kappa$ for which such steady states exist are given in Theorem~\ref{prop:k}. Finally, if there exists a positive complex balanced steady state of $\mN$ that is also complex balanced for a subnetwork, then so is any  positive steady state of $\mN$.

\begin{myexampleHorn}
Consider the following subsets of $\mR$:
\begin{align*}
\mR_1& =\{r_1,r_2,r_6\},  & \mR_2 & =\{r_3,r_4,r_5\}.
\end{align*}
Let respectively $\mN_1,\mN_2$ denote the subnetworks generated by $\mR_1,\mR_2$.
Consider the complex balanced reaction graph $G_1$ in Figure~\ref{fig:horn}. Then $G_3$ is a reaction graph induced by $G_1$ and $\mN_1$.
By Proposition~\ref{prop:decomposition}, $x^*$ is a complex balanced steady state for $\mN$ and $\mN_1$, if and only if $x^*$ is a node balanced steady state with respect to $G_3$. 
In this case, it is also a complex balanced steady state for $\mN_2$.
\end{myexampleHorn}

\begin{myexampleHorn}
 Let $\mN'$ be the subnetwork generated by $\mR'=\{r_1,r_2,r_5,r_6\}$. Both $\mN$ and $\mN'$ are weakly reversible, but  the complementary subnetwork, with reactions $r_3,r_4$, is not.
 Thus there does not exist reaction rate constants for which there exists a positive complex balanced steady state for both $\mN$ and $\mN'$.
 \end{myexampleHorn}

If the sets of reactions $\mR_1,\dots,\mR_\ell$ are not disjoint, then there is not a general unambiguous answer similar to that of Proposition~\ref{prop:decomposition}.

\section{Proofs}\label{sec:proof}
 
\subsection{Proof of Theorems~\ref{thm:complex} and ~\ref{prop:k}}
To ease the notation, we use $[k] = \{1,\dots,k\}$ for $k\in\N$.

  One way to prove Theorem~\ref{thm:complex} and ~\ref{prop:k}  would be to reproduce the arguments of the original results for complex balanced steady states. Indeed, the original arguments work line by line because 
  it is not explicitly used in the proofs that the complexes (node labels of the reaction graph) are different from each other.
This is not even stated as a requirement in \cite{Dickenstein:2011p1112,Craciun-Sturmfels}. The reader familiar with these results will readily see that this is the case.  However, we take a different route here.

For a given network $\mN$, we construct another network $\mN'$, such that  their steady states agree. Further, the complex balanced steady states of $\mN'$ are in one-to-one correspondence with the node balanced steady states of $\mN$. Hence, we can lift the (known) results for complex balanced steady states such that they hold for node balanced steady states as well.
 
Let $G$ be a reaction graph associated with $\mN$. We start with the construction of the network $\mN'=(\mS',\mC',\mR')$. 
  To this end, we define the  species set  as $\mS'=\mS\times V_G=\{(X_i,j)\st X_i\in \mS, \, j\in V_G\}$, and define 
\begin{equation}\label{eq:yj}
\epsilon=1+\max_{y\in\mC}\sum_{i=1}^ny_i, \quad\text{and}\quad y^j=    \sum_{i=1}^n(Y_j)_i\, (X_i,j).
\end{equation}
Since $Y_j$ is the label of the node $j$, then $(Y_j)_i\in \N$ is the coefficient of $Y_j$ in species $X_i$ in the original network.  The reaction set   $\mR'$ is defined as
  \begin{align}
  \mR' &  = \{ y^i\to y^j \st i\to j\in E_G \} 
    \cup \{\epsilon(X_i,j)\to \epsilon(X_i,j')   \st  (X_i,j),(X_i,j')\in \mS' \},  \label{eq:reaction-epsilon} 
  \end{align}
and the   complex set as
 $$\mC' = \{y^1,\dots,y^{m_G}\} \cup \big\{  \epsilon(X_i,j) \st \  (X_i,j) \in \mS'\big\}. $$
 This set has cardinality $m_G+ n\, m_G=m_G(n+1)$. 
We number the set $\mC'$ according to the order
\begin{equation}\label{eq:complexes-number}
y^1,\dots,y^{m_G}, \epsilon(X_1,1),\dots,\epsilon(X_1,m_G), \dots,\epsilon(X_n,1),\dots,\epsilon(X_n,m_G),
\end{equation}
and the species set analogously: $(X_1,1),\dots,(X_1,m_G), \dots, (X_n,1),\dots, (X_n,m_G).$
Reactions are numbered such that the reactions $1,\dots,p$ correspond to the reactions  in $\mR$ (first set in \eqref{eq:reaction-epsilon}, and the rest of the reactions are ordered increasingly in $i$ and arbitrarily within the subsets of reactions involving $(X_i,j)$, $j=1,\dots,m_G$.
The complex reaction graph $G'_c$ of $\mN'$ will refer to this numbering. 

There is a graph isomorphism   from $G$ and the subgraph of $G'_c$ induced by the nodes $1,\dots,m_G$ (those with label $y^1,\dots,y^{m_G}$) that maps $i\in V_G$ to $i$. The coefficient $\epsilon$ ensures  that  the source and target of  the reactions in \eqref{eq:reaction-epsilon} are not  one of $y^j$ in \eqref{eq:yj}.  Thus the reaction graph $G'_c$ has $n$ extra connected components, one for each $X_i$, whose  nodes are labeled by  $ \epsilon(X_i,j)$, $j=1,\dots,m_G$. These components are complete digraphs since there is a directed edge from every node to every other node.
 
 For example, for the reaction graph
 $$\red{1}, X_1+X_2 \rightarrow \red{2}, 3X_1,$$
 we have $\epsilon=4$ and  the reaction network $\mN'$ consists of the reactions 
 $$(X_1,1) + (X_2,1) \rightarrow 3(X_1,2),\qquad 4(X_1,1) \cee{<=>} 4(X_1,2),\qquad 4(X_2,1) \cee{<=>}   4(X_2,2). $$

 The  species set has $n\, m_G$ elements. We identify $\R^{n\,m_G}$ with $\R^{n\times m_G}$ and index 
   the concentration of the species $(X_i,j)$ as $x_{ij}$.
   Consider the linear map $\pi \colon \R^{n\times m_G}  \rightarrow  \R^n$  and  the injective linear map $g\colon \R^{n}\to\R^{n\times m_G}$ defined by
 \begin{align*}
 \pi(x)_i & =  \sum_{j=1}^{m_G} x_{ij}, &&  \   i\in [n], \qquad  \textrm{for all }x\in  \R^{n\times m_G}, \\
 g(x)_{ij} & =x_i, && \   i\in [n], j\in [m_G], \qquad  \textrm{for all }x\in  \R^{n}.
 \end{align*}
 Note that $g$ maps positive vectors to positive vectors and that $\pi(g(x))=m_G\, x$.

\begin{lemma}\label{lem:defNG} 
Let $s=\dim(S)$ and let $S'$ be the stoichiometric subspace  of  $\mN'$.   Then $S=\pi(S')$. 
Further, $S'$  has dimension $s+n(m_G-1)$ and the deficiency of the network $\mN'$  is $\delta_G$.
\end{lemma}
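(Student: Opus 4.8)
The plan is to compute the stoichiometric subspace $S'$ of $\mN'$ explicitly from the two families of reactions in \eqref{eq:reaction-epsilon}, and then read off its image under $\pi$, its dimension, and the deficiency. First I would split the reaction vectors of $\mN'$ into two groups. The reactions $y^i\to y^j$ (for $i\to j\in E_G$) contribute vectors $y^j - y^i$ in $\R^{n\times m_G}$; by construction of $y^j$ in \eqref{eq:yj}, the entry in position $(k,\ell)$ of $y^j$ is $(Y_j)_k$ if $\ell=j$ and $0$ otherwise, so $\pi(y^j) = Y_j$ as a vector in $\R^n$. Hence $\pi(y^j - y^i) = Y_j - Y_i$, and as $i\to j$ ranges over $E_G$ these are exactly the columns of $YC_G = N$ (Proposition~\ref{prop:incidence}(i)), which span $S$. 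The reactions $\epsilon(X_i,j)\to\epsilon(X_i,j')$ contribute vectors $\epsilon(e_{ij'} - e_{ij})$, where $e_{ij}$ is the standard basis vector of $\R^{n\times m_G}$; these span, for each fixed $i$, the $(m_G-1)$-dimensional "zero-sum" subspace of the $i$-th row, and together they span the subspace $W := \{x\in\R^{n\times m_G} : \sum_j x_{ij} = 0 \text{ for all } i\in[n]\} = \ker\pi$, of dimension $n(m_G-1)$.

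With this decomposition, $S' = \spann\{\,y^j - y^i : i\to j\in E_G\,\} + W$. For $S = \pi(S')$: the inclusion $\pi(S')\subseteq$ (span of the $Y_j-Y_i$) $= S$ is immediate since $\pi(W)=0$, and conversely every generator $Y_j - Y_i = \pi(y^j - y^i)$ of $S$ lies in $\pi(S')$, so $S = \pi(S')$. For the dimension of $S'$: since $W = \ker\pi$ and $\pi(S') = S$, the rank–nullity argument applied to $\pi|_{S'}$ gives $\dim S' = \dim\ker(\pi|_{S'}) + \dim\pi(S') = \dim(S'\cap W) + s$. Here one needs $W \subseteq S'$, which holds by construction, so $S'\cap W = W$ and $\dim S' = n(m_G-1) + s$.

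Finally, the deficiency of $\mN'$ (i.e.\ of its complex reaction graph $G'_c$) is $m_{G'_c} - \ell_{G'_c} - \dim S'$. We have $m_{G'_c} = |\mC'| = m_G(n+1)$. For the number of connected components: $G'_c$ restricted to the nodes $y^1,\dots,y^{m_G}$ is isomorphic to $G$ (as noted in the text, the coefficient $\epsilon$ keeps the $y^j$ disjoint from the $\epsilon(X_i,j)$ in $\mC'$), contributing $\ell_G$ components; and each of the $n$ "species blocks" $\{\epsilon(X_i,j) : j\in[m_G]\}$ is a complete digraph, hence one connected component each, contributing $n$ more. So $\ell_{G'_c} = \ell_G + n$. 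Therefore
$$
\delta_{\mN'} = m_G(n+1) - (\ell_G + n) - \bigl(s + n(m_G-1)\bigr) = m_G n + m_G - \ell_G - n - s - n m_G + n = m_G - \ell_G - s = \delta_G.
$$

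The routine part is the bookkeeping with the basis vectors $e_{ij}$; the one point that requires a moment's care is checking that $W\subseteq S'$ (so that $S'\cap W = W$ exactly, not merely $\supseteq$) — this is where the second family of reactions in \eqref{eq:reaction-epsilon}, which was introduced precisely for this purpose, does its work, and where one must confirm that these reactions are genuinely present in $\mR'$ for every pair $j,j'$, which they are by definition. I do not expect any serious obstacle; the lemma is essentially a dimension count once the reaction vectors are written down.
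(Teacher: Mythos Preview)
Your proposal is correct and follows essentially the same approach as the paper's proof: decompose $S'$ as the span of the $y^j-y^i$ plus the span of the $\epsilon$-differences, identify the latter with $\ker\pi$, apply $\pi$ to get $\pi(S')=S$, and use rank--nullity (what the paper phrases as the first isomorphism theorem) together with $\ker\pi\subseteq S'$ to get $\dim S' = s + n(m_G-1)$; the deficiency computation is then identical. The only cosmetic difference is that the paper verifies $\ker\pi = \langle e_{i,1\to j}\rangle$ by an explicit coordinate calculation, whereas you invoke the standard fact that differences of basis vectors span the zero-sum hyperplane in each row.
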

\begin{proof}
Let $e_{i,j\rightarrow j'}=(X_i,j') - (X_i,j)$ be the vector in $\R^{n\times m_G}$ that has two nonzero entries, the $(i,j)$-th, where it is equal to $-1$, and the $(i,j')$-th where it is equal to $1$. 
Then
$$ S' = \big\langle y^j-y^i \st i\rightarrow j\in E_G \big\rangle + \big\langle e_{i,j\rightarrow j'} \st i\in [n], j,j'\in [m_G]\big\rangle.$$

Let us show that $\pi(S')=S$.   Observe that $\pi(S') \subseteq S$ since
$\pi(e_{i,j\rightarrow j'}) = 0$ and $\pi(y^j-y^i)= Y_j-Y_i.$
Furthermore, if $u\in S$, then 
$$u= \sum_{i\rightarrow j\in E_G} \lambda_{ij} (Y_j-Y_i) = \sum_{i\rightarrow j\in E_G} \lambda_{ij} \pi(y^j-y^i) \,\,\in \,\,\pi(S'),$$
hence $\pi(S')=S$.

To determine $\dim S'$ we do the following. For $x\in \ker \pi$  we have
$ \sum_{j=1}^{m_G} x_{ij} =0$   for all $i\in [n]$.
Consider the vector
$$ x' = \sum_{i=1}^n \sum_{j=2}^{m_G}  x_{ij} e_{i,1\rightarrow j}.$$
Since the nonzero entries of  $e_{i,1\rightarrow j}$ are $(e_{i,1\rightarrow j})_{i,j}=1$ and $(e_{i,1\rightarrow j})_{i,1}=-1$, we have
$x'_{k,k'}=x_{k,k'}$ if $k'\neq 1$. Further, using that  $x\in \ker \pi$ by assumption, we have
$x'_{k1} =   - \sum_{j=2}^{m_G}  x_{kj}  = x_{k1}.$
It follows that $x=x'$ and thus $ \ker \pi = \langle e_{i,1\rightarrow j} \st i\in [n], j\in [m_G]\rangle \subseteq S'$. 
For each $i\in [n]$, the vectors $e_{i,1\rightarrow j}$ are the columns of the incidence matrix of 
a connected graph with $m_G$ nodes, which has rank $m_G-1$. 
It follows that $\dim \ker \pi = n(m_G-1)$. Now, by the first isomorphism theorem
$$ \dim S' =  \dim \ker \pi  + \dim \pi(S') = s+ n(m_G-1).$$
Since $C'$ has $\ell_G+ n$ connected components and $m_G(n+1)$ nodes, the deficiency of $\mN'$ is
$$ \delta_{\mN'} = m_G(n+1) - s - n(m_G-1) - \ell_G-  n=  m_G -s  - \ell_G  =\delta_G. $$
\end{proof}

 \paragraph{Proof of Theorem~\ref{thm:complex}.} 
 In order to prove Theorem~\ref{thm:complex} we use that the theorem holds for complex balanced steady states \cite[Theorem 6A]{hornjackson}.
 Since a positive node balanced steady state is in particular complex balanced, any positive  node balanced steady state is asymptotically stable. Further, if there is one positive  node balanced steady state  with respect to $G$, then the network $\mN$ admits exactly one positive steady state within every stoichiometric compatibility class, which is complex balanced. 
Therefore, to prove the theorem all we need is to show that if there is one positive  node balanced steady state  with respect to $G$, then all positive  steady states are node balanced with respect to $G$.

   We endow $\mN'$ with mass-action kinetics, such that the reaction  rate constant $\k_{i\rightarrow j}$ of $y^i\to y^j$ is that of $Y_i\to Y_j$ for any $i\to j\in E_G$. 
 This implies that the reaction  rate constants of the reaction $R_{i\rightarrow j}$  in $G$ and $G'_c$ agree.
  The  reaction  rate constant of   $\epsilon(X_i,j)\to \epsilon(X_i,j')$ is set to $\k_{i,j\rightarrow j'}=1.$ 
  
By \eqref{eq:yj}, for any complex of the form $y^k\in \mC'$, the only nonzero entries are $y^k_{ik} =(Y_k)_{i}$, $i=1,\dots,n$. 
This gives 
\begin{align}\label{eq:Yg}
g(x)^{y^k} &=  \prod_{(i,j)\in [n]\times [m_G]} \hspace{-0.3cm} g(x)_{ij}^{y^k_{ij}}   =    \prod_{i\in  [n]} x_{i}^{(Y_k)_{i}}  = x^{Y_k}, &
 g(x)^{\epsilon(X_{i},k)} &= x_i^\epsilon. 
\end{align}
Denote the kinetics of $\mN$ by $v$ and that of $\mN'$ by $v'$. Then, by the choice of reaction  rate constants we have,
\begin{equation}\label{eq:vG}
v'(g(x))= (v(x), (x_1^\epsilon, \dots, x_1^\epsilon), \dots, (x_n^\epsilon ,\dots,x_n^\epsilon)),
\end{equation}
where for all $i\in [n]$  the vector $(x_i^\epsilon, \dots, x_i^\epsilon)$ has length $m_G(m_G-1)$.

  The incidence matrix of the canonical complex reaction graph of $\mN'$, denoted by $C'$,  is block diagonal  with the first block equal to $C_G$, and the remaining blocks $2,\dots,n+1$ equal to the incidence matrix of a complete digraph with $m_G$ nodes.  We have the following key lemma.

  \begin{lemma}\label{lem:node-complex-steady-state}
 \begin{enumerate}[(i)]
 \item $C' \, v'(g(x))=  (C_G \, v(x) ,  0,\dots, 0) \,\in\, \R^{m_G+n\, m_G}.$
 \item $x\in \R^n_{>0}$ is a positive node balanced steady state of $\mN$ with respect to $G$ if and only if $g(x)\in \R_{>0}^{n\times m_G}$ is a positive complex balanced steady state for $\mN'$.  
 \item If $x' \in  \R_{>0}^{n\times m_G}$ is a positive complex balanced steady state of $\mN'$, then there exists a unique
$x\in  \R^n_{>0}$   such that $x'=g(x)$. 
\end{enumerate}
 \end{lemma}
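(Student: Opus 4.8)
The plan is to establish the three claims of the lemma in order, exploiting the block structure of $C'$ together with the formula \eqref{eq:vG} for $v'(g(x))$.

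For (i) I would simply read off $C'\,v'(g(x))$ block by block. The first block of $C'$ is $C_G$ and the first block of $v'(g(x))$ is $v(x)$, contributing $C_G v(x)$. For $i\in[n]$, the $(i+1)$-st block of $C'$ is the incidence matrix of a complete digraph on the $m_G$ nodes $\epsilon(X_i,1),\dots,\epsilon(X_i,m_G)$, while by \eqref{eq:vG} the matching block of $v'(g(x))$ is the constant vector with all $m_G(m_G-1)$ entries equal to $x_i^\epsilon$. The one fact needed is that the incidence matrix of a complete digraph annihilates the all-ones vector: every node of a complete digraph on $m_G$ nodes has in-degree and out-degree both equal to $m_G-1$, so each of its rows in the incidence matrix sums to zero. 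Hence the last $n$ blocks of $C'\,v'(g(x))$ vanish, which is exactly (i). Part (ii) is then immediate: $g$ is injective and $g(x)\in\R_{>0}^{n\times m_G}$ if and only if $x\in\R^n_{>0}$ (for the nontrivial direction use $x_i=g(x)_{i1}$), so by (i) $g(x)$ is a positive complex balanced steady state of $\mN'$, i.e.\ $C'\,v'(g(x))=0$ with $g(x)>0$, precisely when $C_G v(x)=0$ with $x>0$, which is the definition of $x$ being a positive node balanced steady state of $\mN$ with respect to $G$.

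Part (iii) is where a genuine argument is required: given a positive complex balanced steady state $x'$ of $\mN'$, I must show that $x'$ lies in the image of $g$, equivalently that for each $i$ the entries $x'_{i1},\dots,x'_{im_G}$ coincide. The key observation is that the nodes $\epsilon(X_i,1),\dots,\epsilon(X_i,m_G)$ form a connected component of $G'_c$ by themselves, so the complex balancing equations at these nodes involve only $x'_{i1},\dots,x'_{im_G}$. Writing $a_j=(x'_{ij})^\epsilon>0$ and using mass-action kinetics with all rate constants of the reactions in \eqref{eq:reaction-epsilon} equal to $1$, the balancing equation at the node $\epsilon(X_i,k)$ reads $\sum_{j\neq k}a_j=(m_G-1)a_k$, that is, $\sum_{j=1}^{m_G}a_j=m_G\,a_k$ for every $k$; hence all the $a_k$ are equal, and since $t\mapsto t^\epsilon$ is injective on $\R_{>0}$, so are the $x'_{ij}$. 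Setting $x_i$ to this common value yields $x\in\R^n_{>0}$ with $g(x)=x'$, and uniqueness follows from $x_i=g(x)_{i1}$.

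I expect (iii) to be the main obstacle, in the modest sense that one has to recognise the right structural reason — the complete-digraph ``equilibrator'' components introduced in \eqref{eq:reaction-epsilon} — forcing the columns of $x'$ to agree; alternatively one could note that each such component is a weakly reversible deficiency-zero network and invoke Theorem~\ref{thm:complex}, but the elementary computation above is shorter. Parts (i) and (ii) are essentially bookkeeping once \eqref{eq:vG} and the block form of $C'$ are in hand.
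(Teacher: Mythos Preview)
Your proposal is correct and follows essentially the same approach as the paper: part (i) uses the block structure of $C'$ together with \eqref{eq:vG} and the fact that the incidence matrix of the complete digraph kills a constant vector, part (ii) is immediate, and part (iii) writes out the complex balancing equations on the complete-digraph component to force $(x'_{ij})^\epsilon$ to be independent of $j$. One small caution: your parenthetical alternative for (iii), invoking Theorem~\ref{thm:complex}, would be circular here since this lemma is being used to prove that theorem; the elementary computation you actually carry out is the right route.
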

\begin{proof}
(i) By the block form of $C'$ and \eqref{eq:vG}, 
every column in the  blocks $2,\dots,n+1$ appear with $+$ and with $-$ sign in the same block. Thus multiplication of $(i+1)$-th block with $(x_i^\epsilon ,\dots,x_i^\epsilon)$ is zero. The result follows now because the first block of $C'$ is $C_G$ and the first $m_G$ entries of $v'(g(x))$ agree with $v(x)$. 

(ii) follows directly from (i).
 
 (iii)  For fixed $i\in [n]$,  consider the incidence matrix  of the complete digraph corresponding to the nodes $(i,j)$ for all $j\in [m_G]$.  
Each row of the matrix has exactly $m_G-1$ entries equal to $-1$ and  $m_G-1$ entries equal to one, since there are $m_G-1$ edges with target $j$ and  $m_G-1$ edges with source $j$.  For the edges with source $j$, the rate function is $(x')^{\epsilon(X_{i},j)} = (x')_{ij}^\epsilon$. Thus complex balancing implies
$$ (m_G-1 ) (x')_{ij}^\epsilon = \sum_{j'\in [m_G], j'\neq j}  (x')_{ij'}^\epsilon,\qquad \textrm{for all }j\in [m_G]. $$
For distinct $j_1,j_2\in [m_G]$ this gives 
$$(m_G-1) ((x')_{ij_1}^\epsilon - (x')_{ij_2}^\epsilon)  = 
\sum_{j'\in V_G, j'\neq j_1}  (x')_{ij'}^\epsilon   - \sum_{j'\in V_G, j'\neq j_2}  (x')_{ij'}^\epsilon
=  (x')_{ij_2}^\epsilon   -  (x')_{ij_1}^\epsilon.   $$
This gives $0= m_G( (x')_{ij_1}^\epsilon - (x')_{ij_2}^\epsilon)$. Since $x'$ is positive, we obtain 
$x'_{ij_1} =x'_{ij_2}$ for all pairs $j_1,j_2$ and all $i\in [n]$.  As a consequence, $x'$ is in the image of $g$.
Unicity follows because $g$ is injective.
\end{proof}

  By Lemma~\ref{lem:node-complex-steady-state}(ii), 
 if $\mN$ admits a positive node balanced steady state  $x^*$ with respect to $G$, then $g(x^*)$ is a positive complex balanced steady state of $\mN'$ and  it follows that all positive steady states of $\mN'$ are complex balanced.  Let now $x^{**}$ be another positive steady state of $\mN$. The stoichiometric compatibility class of $\mN'$ containing $g(x^{**})$ has one positive steady state $x'$, which is complex balanced. By 
 Lemma~\ref{lem:node-complex-steady-state}(iii), there exists $x'' \in \R^n_{>0}$ such that $x'=g(x'')$, and by Lemma~\ref{lem:node-complex-steady-state}(ii), it follows that $x''$ is a node balanced steady state with respect to $G$. Let us show that $x''=x^{**}$. 
 We have that $g(x''-x^{**})\in S'$, since $x'$ and $g(x^{**})$ belong to the same stoichiometric compatibility class of $\mN'$. 
  By  Lemma~\ref{lem:defNG}, it follows that $x''-x^{**} = \frac{1}{m_G} \pi( g(x'-x^{**}) )\in S$. Thus $x''$ and $x^{**}$ are in the same stoichiometric compatibility class of $\mN$ and they are both positive steady states. Since there is a unique positive (complex balanced) steady state in each class, they must coincide.
   This concludes the proof of Theorem~\ref{thm:complex}.
\qed

 \paragraph{Proof of Theorem~\ref{prop:k}.} 
 Consider the labeling matrix $Y'$ of the canonical complex reaction graph of $\mN'$
 with numbering of complexes given in \eqref{eq:complexes-number}. 
We let a vector $u\in   \R^{m_G+n\, m_G}$  be indexed as 
 $$u=(u_1,\dots,u_{m_G},u_{11},\dots,u_{1m_G},\dots,u_{n1},\dots,u_{nm_G})$$
 and use the same indexing for the columns of $Y'$.
The matrix  $Y'$ has one or two nonzero entries in each row: if $(X_i,j)$ is part of  $y^j$, then the row corresponding to this species has two nonzero entries: $(Y_j)_i$ in column $j$ and $\epsilon$ in column $(i,j)$.
If $(X_i,j)$ is not part of $y^j$, then the row corresponding to this species has one nonzero entry:  $\epsilon$ in column $(i,j)$.

  Let $\Psi'$ be the map \eqref{eq:chiG} for the canonical complex reaction graph of $\mN'$.
 We start by giving an explicit isomorphism between $ \ker \Psi'$ and $\ker \Psi_G $, which exists since both vector subspaces have dimension $\delta_G$  (cf. Lemma~\ref{lem:defNG}, Proposition~\ref{lem:defkernel}). Consider the projection map
$$ \rho \colon \R^{m_G + n m_G } \rightarrow \R^{m_G}, \qquad \rho(x)=(x_1,\dots,x_{m_G}).$$
 
 \begin{lemma}
 The linear map $\rho$ induces an isomorphism from $ \ker \Psi'$ to $ \ker \Psi_G$.
 \end{lemma}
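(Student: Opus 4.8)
The plan is to check that $\rho$ restricts to a well-defined \emph{injective} linear map $\ker\Psi'\to\ker\Psi_G$, and then conclude by a dimension count: $\dim\ker\Psi'=\delta_{\mN'}=\delta_G=\dim\ker\Psi_G$ by Lemma~\ref{lem:defNG} together with Proposition~\ref{lem:defkernel}, so any injective linear map between these spaces is automatically an isomorphism. (Equivalently, one can write down the inverse explicitly; see the end.)

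First I would unwind the structure of $\Psi'$. Writing $u=(u_1,\dots,u_{m_G},u_{11},\dots,u_{nm_G})\in\R^{m_G+nm_G}$ with the indexing of the columns of $Y'$ fixed above, the row of $Y'$ indexed by the species $(X_i,j)$ has entry $(Y_j)_i$ in column $j$, entry $\epsilon$ in column $(i,j)$, and is otherwise zero. Hence $Y'u=0$ is equivalent to
\[
(Y_j)_i\,u_j+\epsilon\,u_{ij}=0\qquad\text{for all }i\in[n],\ j\in[m_G].
\]
For the component part, the connected components of the canonical complex reaction graph $G'_c$ of $\mN'$ are the $\ell_G$ components $G_1,\dots,G_{\ell_G}$ of the copy of $G$ sitting on the nodes $y^1,\dots,y^{m_G}$, together with the $n$ complete digraphs on $\{(i,j):j\in[m_G]\}$, one for each $i\in[n]$; this uses that the coefficient $\epsilon$ keeps each $\epsilon(X_i,j)$ distinct from every $y^k$. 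Therefore the vanishing of the component sums of $u$ amounts to $\sum_{k\in G_t}u_k=0$ for $t=1,\dots,\ell_G$ and $\sum_{j=1}^{m_G}u_{ij}=0$ for $i=1,\dots,n$.

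With these equations in hand the two required facts are immediate. For $\rho(\ker\Psi')\subseteq\ker\Psi_G$: given $u\in\ker\Psi'$, the relations $\sum_{k\in G_t}u_k=0$ are exactly the component-sum conditions of $\Psi_G$ evaluated at $\rho(u)=(u_1,\dots,u_{m_G})$, while for the labeling part $\bigl(Y\rho(u)\bigr)_i=\sum_{j=1}^{m_G}(Y_j)_i u_j=-\epsilon\sum_{j=1}^{m_G}u_{ij}=0$, using the displayed row relations and then $\sum_j u_{ij}=0$; hence $\Psi_G(\rho(u))=0$. For injectivity of $\rho|_{\ker\Psi'}$: if $u\in\ker\Psi'$ and $\rho(u)=0$, then $u_j=0$ for $j\le m_G$, so $\epsilon u_{ij}=-(Y_j)_i u_j=0$, and since $\epsilon\neq0$ we get $u_{ij}=0$ for all $i,j$, i.e.\ $u=0$. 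Combined with the equality of dimensions this proves the claim. Alternatively, the inverse can be displayed directly: send $w\in\ker\Psi_G$ to the vector $u$ with $u_j=w_j$ for $j\in[m_G]$ and $u_{ij}=-(Y_j)_i w_j/\epsilon$, and check (via the same three families of equations) that $u\in\ker\Psi'$.

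The argument is essentially bookkeeping; the only steps needing care are getting the row pattern of $Y'$ right—in particular that the nonzero scalar $\epsilon$ occupies the ``diagonal-type'' entries $(i,j)$, which is precisely what forces injectivity—and observing that the relation $(Y_j)_i u_j=-\epsilon u_{ij}$ together with $\sum_j u_{ij}=0$ is what transports $Y'u=0$ into $Y\rho(u)=0$. I expect the main (and minor) obstacle to be confirming the decomposition of the connected components of $G'_c$, which follows from the graph isomorphism between $G$ and the subgraph of $G'_c$ on the nodes $y^1,\dots,y^{m_G}$ and from the separating role of the coefficient $\epsilon$.
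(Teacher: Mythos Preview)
Your proposal is correct and follows essentially the same approach as the paper: both arguments unpack the row relations $(Y_j)_i u_j+\epsilon u_{ij}=0$ from $Y'u=0$, use the component-sum conditions to show $\rho(\ker\Psi')\subseteq\ker\Psi_G$ and to establish injectivity, and then conclude by the dimension equality $\dim\ker\Psi'=\delta_G=\dim\ker\Psi_G$. Your presentation is slightly more explicit (spelling out the component decomposition of $G'_c$ and the inverse map), but the substance is identical.
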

 \begin{proof}
We first show that $\rho(x)\in \ker \Psi_G$ for $x\in \ker \Psi'$. Using the form of the labeling matrix  $Y'$,
we have that  if $x\in \ker \Psi'$,    then
\begin{equation}\label{eq:kerpsirelation}
(Y_j)_i \, x_j= - \epsilon x_{ij},\qquad\textrm{ for all }i\in [n], j\in [m_G].
\end{equation} 
Thus, for every $i\in [n]$ we have
$$\sum_{j=1}^{m_G} (Y_j)_i x_j = - \epsilon \sum_{j=1}^{m_G} x_{ij} =0,  $$
by definition of $\Psi'$, since the nodes $(i,1),\dots,(i,m_G)$  form a connected component of $C'$. 
By the correspondence between connected components of $C'$ and $G$, $\sum_{i\in G_k} x_i=0$ for all $k\in [\ell_G]$.  
This shows that $\rho(x)\in \ker \Psi_G$.

Let us find the kernel of $\rho$ restricted to $\ker \Psi'$. 
We have $\rho(x)=0$ if and only if $x_j=0$ for all $j=1,\dots,m_G$. By \eqref{eq:kerpsirelation}, 
it follows that also $x_{ij}=0$ for all $i$ and $j$. As a consequence $x=0$. 
Therefore $\rho$ is an injective linear map between two vector spaces of the same dimension, $\ker \Psi'$ and $\ker \Psi_{\mN_G}$, and it is thus an isomorphism.
\end{proof}

 By Theorems 7 and 9 in \cite{Craciun-Sturmfels}, there exists a positive complex balanced steady state for $\mN'$ with a vector of reaction   rate constants $\k'$  if and only if Theorem~\ref{prop:k}(i) holds, that is,
$$(K')^u=1,\qquad \textrm{for all }u\in  \ker \Psi',$$
where $K'$ is computed from the spanning trees in $C'$ with labels given by  $\k'$.
For the particular choice of reaction  rate constants we have made, 
\begin{equation}\label{eq:Kprimeequal}
K'_{ij}=K'_{ij'} \qquad \textrm{ for all }i\in [n],j, j'\in [m_G].
\end{equation}
Indeed, the reaction  rate constants of any spanning tree rooted at $\epsilon(X_i,j)$ or $\epsilon(X_i,j')$ (in the corresponding  connected  component) are equal to one. Moreover $\epsilon(X_i,j)$ and $\epsilon(X_i,j')$ are in the same connected component, which is a complete digraph. Hence the number of spanning trees rooted at $\epsilon(X_i,j)$ and $\epsilon(X_i,j')$ is the same.

Further, using that $G$ and the subgraph of $C'$  with nodes $1,\dots,m_G$ are isomorphic and preserve the reaction  rate constants, it holds that
\begin{equation}\label{eq:KG2}
K_{G} = \rho(K'). 
\end{equation}
If $u\in\ker \Psi'$ , then $\sum_{j=1}^{m_G} u_{ij} =0$. Therefore
$$(K')^u= \rho(K')^{\rho(u)}\prod_{i=1}^{n}\prod_{j=1}^{m_G} (K'_{ij})^{u_{ij}} =
\rho(K')^{\rho(u)}\prod_{i=1}^{n} (K'_{i1})^{\sum_{j=1}^{m_G}u_{ij}} = \rho(K')^{\rho(u)} =K_{G}^{\rho(u)}.$$ 
Since $\rho$ is an isomorphism between $\ker \Psi'$ and $\ker \Psi_G$, we conclude that 
\begin{equation}
(K')^u=1,\qquad \textrm{for all }u\in  \ker \Psi', \qquad \Leftrightarrow \qquad K_{G}^u=1, \qquad \textrm{for all }u\in  \ker \Psi_G.
\end{equation}
 Consequently,  we have  $K_{G}^u=1$ for all $u\in  \ker \Psi_G$  if and only if $\mN'$ admits a positive complex balanced steady state for a choice of reaction  rate constants $\k'$ such that $\k'_{i,j\rightarrow j'}=1$  for all $i\in [n]$, $j,j'\in [m_G]$ and $\k'_{i\rightarrow j} = \k_{i\rightarrow j}$ for all $i\rightarrow j\in E_{C'}$, if and only if $\mN$ admits  a positive node balanced steady state with respect to $G$
for the corresponding choice of  reaction  rate constants $\k$ (Lemma~\ref{lem:node-complex-steady-state}(ii)).  This proves (i).

\medskip
The proof of part (ii) follows the same line of arguments.  We use the same notation for reaction  rate constants $\k$ of $\mN$ and $\k'$ of $\mN'$. By \cite[Eq (21)]{Dickenstein:2011p1112},   (ii) holds for complex balanced steady states of $\mN'$.  That is,
$x' \in \R^{n\times m_G}_{>0}$ is a complex balanced steady state of $\mN'$  if and only if 
\begin{align*}
K'_i (x')^{y^i} -  K'_{j}  (x')^{y^j} & =0, \quad \forall(i,j)\in E_G, \qquad  and \\
K'_{ij} \, (x')^{\epsilon(X_i,j)} -  K'_{ij'} \,(x')^{\epsilon(X_i,j')} & =0,\qquad \forall i\in [n], j,j'\in [m_G].
\end{align*}
Let $x'=g(x)$.  By   \eqref{eq:Yg} and \eqref{eq:Kprimeequal}, the equations in the second row are satisfied.
According to \eqref{eq:Yg} and \eqref{eq:KG2}, the equations in the first row are  equal to
$ K_i x^{Y_i} -  K_{j}  x^{Y_j}=0$. 
This gives the following:
$x$ is a positive node balanced steady state  of $\mN$ with respect to $G$, 
if and only if $g(x)$ is a positive complex balanced steady state   of $\mN'$ (Lemma~\ref{lem:node-complex-steady-state}(ii)), if and only if $ K_i x^{Y_i} -  K_{j}  x^{Y_j}=0$ for all $(i,j)\in E_G$.
Thus (ii) holds and the theorem is proven.
\qed

\subsection{Proof of Theorem~\ref{thm:inclusion}}

To simplify the notation, we let $K$ denote $K_G$ and $K'$ denote $K_{G'}$ throughout the subsection.

\paragraph{(i) The nodes $i_1,i_2$ do \emph{not} belong to the same connected component of $G$. }
In this case, $\delta_G=\delta_{G'}$ by Proposition \ref{prop:deficiency}. We will show that  the two sets of equations in $\k$ for which $(\mN,G,\k)$ and $(\mN,G',\k)$ are node balanced  can be chosen to be the same. 
Let $G_{p_1}$ and $G_{p_2}$ be the connected components of $G$ containing $i_1,i_2$ respectively, and $G'_q$ be the connected component of $G'$ containing   $k$.
The morphism $\varphi$  is a bijection between $V_{G}\setminus \{i_1,i_2\}$ and $V'_G\setminus \{k\}$ and induces an isomorphism between the subgraph of $G$ obtained by removing $G_{p_1},G_{p_2}$ and the subgraph of $G'$ obtained by removing $G'_q$.
Let $V_i$ (resp. $V_i'$) denote the set of nodes of the $i$-th connected component of $G$ (resp. $G'$).

 We define  two linear maps
$\alpha\colon \R^{m_G} \rightarrow \R^{m'_G}$, and $\beta\colon \R^{m'_G} \rightarrow \R^{m_G}$
by
$$ \alpha(x)_i = \begin{cases}
x_{\varphi^{-1}(i)} & \textrm{if }i\neq k \\
x_{i_1} + x_{i_2} & \textrm{if }i = k,
\end{cases}\qquad  
\beta(x)_i = \begin{cases}
x_{\varphi(i)} & \textrm{if }i\neq i_1,i_2 \\
\sum_{j\in V_{p_2}}x_{\varphi(j)} & \textrm{if }i = i_1 \\
\sum_{j\in V_{p_1} }x_{\varphi(j)} & \textrm{if }i = i_2. 
\end{cases}$$

\begin{proposition}\label{eq:keronecomp}
The morphisms $\alpha$ and $\beta$ induce isomorphisms between $\ker \Psi_G$ and $\ker \Psi_{G'}$. 
\end{proposition}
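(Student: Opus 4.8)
The plan is to verify the two inclusions $\alpha(\ker\Psi_G)\subseteq\ker\Psi_{G'}$ and $\beta(\ker\Psi_{G'})\subseteq\ker\Psi_G$ directly from the definitions of $\Psi_G$, $\Psi_{G'}$, $\alpha$, $\beta$, and then to check that $\alpha$ and $\beta$ restrict to mutually inverse maps between these subspaces. Since $\dim\ker\Psi_G=\delta_G=\delta_{G'}=\dim\ker\Psi_{G'}$ by Proposition~\ref{prop:deficiency}(i) together with Proposition~\ref{lem:defkernel}, it suffices to establish the two inclusions and that one composite, say $\alpha\circ\beta$, equals the identity on $\ker\Psi_{G'}$; mutual invertibility and bijectivity then follow formally.

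First I would isolate the bookkeeping facts about $\varphi$ used throughout. As $\varphi$ is a morphism of reaction graphs, $Y'_{\varphi(i)}=Y_i$ for every $i$, and in particular $Y_{i_1}=Y_{i_2}=Y'_k$. Because $i_1\in V_{p_1}$ and $i_2\in V_{p_2}$ lie in different connected components of $G$, the map $\varphi$ restricts to a bijection $V_G\setminus\{i_1,i_2\}\to V_{G'}\setminus\{k\}$ that carries every component of $G$ other than $G_{p_1},G_{p_2}$ onto a component of $G'$, while $\varphi(V_{p_1})$ and $\varphi(V_{p_2})$ cover the merged component $V_q'$ and meet exactly in $\{k\}$; I fix compatible orderings of the components of $G$ and $G'$ so that $G'_q=\varphi(G_{p_1}\cup G_{p_2})$ and the remaining components are paired up by $\varphi$.

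Next I would compute $\Psi_{G'}\circ\alpha$. Splitting the sum defining $Y'\alpha(x)$ into the $k$-coordinate and the rest and using $Y'_{\varphi(i)}=Y_i$ and $Y_{i_1}=Y_{i_2}=Y'_k$ gives $Y'\alpha(x)=Yx$. For the component-sum coordinates one gets $\sum_{\ell\in V_r'}\alpha(x)_\ell=\sum_{i\in V_r}x_i$ whenever $G'_r$ is not the merged component, and $\sum_{\ell\in V_q'}\alpha(x)_\ell=\sum_{i\in V_{p_1}}x_i+\sum_{i\in V_{p_2}}x_i$, the extra $x_{i_1}+x_{i_2}$ in $\alpha(x)_k$ exactly compensating the two missing summands. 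Hence if $x\in\ker\Psi_G$ -- so $Yx=0$ and \emph{every} component sum of $x$ vanishes, including those over $V_{p_1}$ and over $V_{p_2}$ -- then $\Psi_{G'}(\alpha(x))=0$. The calculation for $\beta$ is symmetric, with the one extra feature that $\varphi(V_{p_1})$ and $\varphi(V_{p_2})$ overlap in $k$: one finds $Y\beta(x)=Y'x+Y'_k\sum_{\ell\in V_q'}x_\ell$, the sum of $\beta(x)$ over any component of $G$ other than $G_{p_1},G_{p_2}$ equals the sum of $x$ over the paired component of $G'$, and the sums of $\beta(x)$ over $V_{p_1}$ and over $V_{p_2}$ each equal $\sum_{\ell\in V_q'}x_\ell$ (the extra $x_k$ coming from the overlap cancelling the missing $x_k$ term). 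Thus $x\in\ker\Psi_{G'}$ forces $\beta(x)\in\ker\Psi_G$.

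Finally I would check $\alpha\circ\beta=\id$ on $\ker\Psi_{G'}$: for $i\neq k$ one has $\alpha(\beta(x))_i=\beta(x)_{\varphi^{-1}(i)}=x_i$ at once since $\varphi^{-1}(i)\notin\{i_1,i_2\}$, while $\alpha(\beta(x))_k=\beta(x)_{i_1}+\beta(x)_{i_2}=\left(\sum_{\ell\in V_q'}x_\ell\right)+x_k=x_k$ because $x$ has vanishing component sum over $V_q'$. Together with the equality of dimensions this shows $\alpha$ and $\beta$ are mutually inverse isomorphisms $\ker\Psi_G\cong\ker\Psi_{G'}$. I do not anticipate any genuine obstacle: the argument is pure index-chasing through $\varphi$, and the only point requiring care is tracking the $x_k$ and $x_{i_1},x_{i_2}$ terms produced by the overlap $\varphi(V_{p_1})\cap\varphi(V_{p_2})=\{k\}$, noting that membership in $\ker\Psi_G$ (resp. $\ker\Psi_{G'}$) supplies the vanishing of each individual component sum, which is precisely what annihilates those leftover terms.
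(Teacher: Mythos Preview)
Your proposal is correct and follows essentially the same approach as the paper's proof: both verify $\alpha\circ\beta=\id$ on $\ker\Psi_{G'}$, show $\beta(\ker\Psi_{G'})\subseteq\ker\Psi_G$, and finish via the dimension equality $\delta_G=\delta_{G'}$. The only minor difference is that you verify $\alpha(\ker\Psi_G)\subseteq\ker\Psi_{G'}$ directly, whereas the paper deduces it at the end from the fact that $\beta$ is already known to be a bijection onto $\ker\Psi_G$ with left inverse $\alpha$; also, the paper computes $Y\beta(x)$ by first establishing $Y=Y'\circ\alpha$ on all of $\R^{m_G}$ and then invoking $\alpha\beta(x)=x$, while you compute $Y\beta(x)=Y'x+Y'_k\sum_{\ell\in V'_q}x_\ell$ directly---these are equivalent bookkeeping choices.
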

\begin{proof}
We first show that $\alpha\circ \beta (x)=x$ if $x\in \ker \Psi_{G'}$.
For $i\neq k$, we have $\varphi^{-1}(i)\neq i_1,i_2$ and this gives
$$\alpha \circ\beta(x)_i  = \beta(x)_{\varphi^{-1}(i)}  = x_{\varphi\circ \varphi^{-1}(i)} = x_i.$$
For $i=k$,   using that $x\in \ker \Psi_{G'}$, we have
$$ \alpha \circ\beta(x)_k =\beta(x)_{i_1} + \beta(x)_{i_2} = \sum_{j\in V_{p_2}}x_{\varphi(j)} + 
\sum_{j\in V_{p_1}}x_{\varphi(j)}  = x_k + \sum_{j\in V'_q} x_j = x_k.$$
 This proves that $ \alpha \circ\beta(x)=x$. In particular, $\beta$ is injective.

 We show now that
  $\beta(\ker \Psi_{G'})\subseteq \ker \Psi_{G}$. 
Recall that  $Y'_{\varphi(i)}= Y_i$ for all $i\in [m_G]$.
Consider the labeling matrices $Y,Y'$ and the linear maps  induced by $Y,Y'$ in $\R^{m_G}$ and $\R^{m_G'}$, respectively. We will show that 
$Y = Y'\circ \alpha$ in $\R^{m_G}$.
For $x\in \R^{m_G}$  we have
\begin{align*}
Y'\circ\alpha(x)  & = \sum_{i=1}^{m'_G} \alpha(x)_i  Y'_{i} =
(x_{i_1} + x_{i_2}) Y'_{k} + \sum_{i=1, i\neq k}^{m'_G} x_{\varphi^{-1}(i)} Y'_{i}   \\ &=   
x_{i_1}Y_{i_1} + x_{i_2} Y_{i_2} + \sum_{j=1, j\neq i_1,i_2}^{m_G'} x_{j} Y_{\varphi(j)} 
= \sum_{j=1}^{m_G} x_j Y_j = Y x.
\end{align*}
Let  $x\in \ker \Psi_{G'}$. 
For $j\neq p_1,p_2$, let $G'_{t(j)}$ be the connected component of $G'$ isomorphic to $G_j$ by $\varphi$. 
Then 
\begin{align*}
Y \circ\beta(x) & = Y'\circ \alpha \circ\beta (x) = Y'x =0 \\
 \sum_{i\in  V_j}   \beta(x)_i  & = \sum_{i\in  V_j} x_{\varphi(i)}= \sum_{i\in  V'_{t(j)}} x_{i}=0,\qquad j\neq p_1,p_2, \\
  \sum_{i\in  V_{p_1}}   \beta(x)_i &= \beta(x)_{i_1} +  \sum_{i\in  V_{p_1}, i\neq i_1} x_{\varphi(i)} =
\sum_{i\in  V_{p_2}} x_{\varphi(i)} +  \sum_{i\in  V_{p_1}, i\neq i_1} x_{\varphi(i)} =
\sum_{i\in V'_q} x_i=0.
\end{align*}
The last equation with the roles of $p_1$ and $p_2$ interchanged holds analogously.
Thus $\Psi_G(\beta(x))=0$ and $\beta(x)\in \ker \Psi_{G}$.
Since $\dim \ker \Psi_{G'} = \delta_{G'} = \delta_G = \dim \ker \Psi_{G}$ and $\beta$ is injective,
$\beta$ is an isomorphism and so is $\alpha$.  In particular $\alpha(\ker \Psi_{G})\subseteq \ker \Psi_{G'}$.
\end{proof}

The proposition shows  how the morphism $\beta$ can be used to find a basis of $\ker \Psi_{G}$ from a basis of  $\ker \Psi_{G'}$, provided that $G'$ is obtained from $G$ by joining one pair of nodes  and $G'$ has one fewer connected components than $G$.  

\begin{proposition}\label{prop:Kagree} 
Let $u\in \ker \Psi_{G'}$. Then
$$ {K'}^{u} = K^{\beta(u)}. $$
\end{proposition}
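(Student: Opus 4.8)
The plan is to prove Proposition~\ref{prop:Kagree} by expanding both monomials over the node sets and matching them factor by factor, the only nontrivial ingredient being a decomposition of the spanning trees of the connected component of $G'$ produced by the merge. Set $A=\varphi(V_{p_1})\setminus\{k\}$ and $B=\varphi(V_{p_2})\setminus\{k\}$. Since $\varphi$ is a morphism of reaction graphs that is injective on $V_{p_1}$ and on $V_{p_2}$ and identifies only $i_1$ with $i_2$, the node set of $G'_q$ is the disjoint union $A\sqcup\{k\}\sqcup B$; every edge of $G'_q$ lies entirely inside $A\cup\{k\}$ or entirely inside $B\cup\{k\}$; $\varphi$ restricts to reaction-rate-preserving isomorphisms $G'_q|_{A\cup\{k\}}\cong G_{p_1}$ (with $k$ corresponding to $i_1$) and $G'_q|_{B\cup\{k\}}\cong G_{p_2}$ (with $k$ corresponding to $i_2$); and $\varphi$ restricts to a reaction-rate-preserving isomorphism between $G$ outside $V_{p_1}\cup V_{p_2}$ and $G'$ outside $V'_q$.

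First I would establish the key combinatorial fact: for every root $r\in A\cup\{k\}$, the spanning trees of $G'_q$ rooted at $r$ (all edges oriented towards the root, as in $\Theta_{G',r}$) are in weight-preserving bijection with pairs $(\tau_1,\tau_2)$, where $\tau_1$ is a spanning tree of $G'_q|_{A\cup\{k\}}$ rooted at $r$ and $\tau_2$ is a spanning tree of $G'_q|_{B\cup\{k\}}$ rooted at $k$. The reason is that in any such tree the unique out-edge of the cut vertex $k$ must point into $A$, for otherwise the directed path from a node of $B$ to $r$ would have to pass through $k$ twice; hence the edges split cleanly between the two sides, and conversely gluing any two such trees at $k$ creates no directed cycle. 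Transporting along the two isomorphisms above yields, for $r\in A$, the identity $K'_r=K_{\varphi^{-1}(r)}\,K_{i_2}$ with $\varphi^{-1}(r)\in V_{p_1}$, together with $K'_k=K_{i_1}K_{i_2}$; symmetrically $K'_r=K_{i_1}\,K_{\varphi^{-1}(r)}$ for $r\in B$; and $K'_r=K_{\varphi^{-1}(r)}$ for $r\notin V'_q$.

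It then remains to substitute these expressions into ${K'}^u=\prod_{r\in V_{G'}}(K'_r)^{u_r}$, splitting the product over $V_{G'}\setminus V'_q$, $A$, $\{k\}$ and $B$, and to collect exponents. Each $K_i$ with $i\in V_G\setminus\{i_1,i_2\}$ then appears with exponent $u_{\varphi(i)}=\beta(u)_i$; the factor $K_{i_1}$ accumulates the exponent $u_k+\sum_{b\in B}u_b$, which equals $\sum_{j\in V_{p_2}}u_{\varphi(j)}=\beta(u)_{i_1}$ directly from the definition of $\beta$, and $K_{i_2}$ accumulates $u_k+\sum_{a\in A}u_a=\sum_{j\in V_{p_1}}u_{\varphi(j)}=\beta(u)_{i_2}$ in the same way. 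This gives ${K'}^u=\prod_{i\in V_G}(K_i)^{\beta(u)_i}=K^{\beta(u)}$; in fact the identity holds for every $u\in\R^{m_{G'}}$, and the hypothesis $u\in\ker\Psi_{G'}$ is not needed here.

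I expect the main obstacle to be making the tree-splitting argument of the second step fully rigorous: one must check both that the out-edge of the cut vertex $k$ necessarily points towards the side containing the root and that the gluing map is a genuine bijection preserving the monomial $\k^\tau$. If a slicker route is preferred, the factorization $K'_r=K_{\varphi^{-1}(r)}K_{i_2}$ can instead be read off the block-triangular shape of the weighted Laplacian of $G'_q$ obtained by ordering the nodes as $A$, then $k$, then $B$, together with the cofactor formula \eqref{eq:Kminor}.
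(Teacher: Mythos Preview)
Your proposal is correct and follows essentially the same route as the paper: establish the factorizations $K'_{\varphi(i)}=K_iK_{i_2}$ for $i\in V_{p_1}$, $K'_{\varphi(i)}=K_iK_{i_1}$ for $i\in V_{p_2}$, $K'_k=K_{i_1}K_{i_2}$, and $K'_{\varphi(i)}=K_i$ otherwise, then substitute and collect exponents. Your cut-vertex justification for the tree-splitting is a bit more explicit than the paper's, which simply asserts that any spanning tree of $G'_q$ rooted at $\varphi(i)$ with $i\in V_{p_1}$ is the $\varphi$-image of a union of a spanning tree of $G_{p_1}$ rooted at $i$ and one of $G_{p_2}$ rooted at $i_2$; and your observation that the hypothesis $u\in\ker\Psi_{G'}$ is not actually used is also correct --- the paper's own computation does not invoke it either.
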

\begin{proof}
If $i$ is a node of $G$ that does not belong to $G_{p_1}$ nor $G_{p_2}$, then
$K_{i} = K'_{\varphi(i)}.$
If $i\in V_{p_1}$, then any spanning tree rooted at $\varphi(i)$ in $G'_q$ consists of the image by $\varphi$ of the union of a spanning tree rooted at $i$ in $G_{p_1}$ and one tree rooted at $i_2$ in $G_{p_2}$. 
Thus
$K'_{\varphi(i)} = K_{i}K_{i_2}$.  If $i \in V_{p_2}$, then we obtain analogously that 
$K'_{\varphi(i)} = K_{i}K_{i_1}.$ Note that if $i=i_1$ or $i_2$ and $\varphi(i)=k$, then the two equations agree, $K'_{k} = K_{i_1}K_{i_2}$.
Using the definition of $\beta$, this gives:
$$ (K')_{\varphi(i)}^{u_{\varphi(i)}} = \begin{cases} 
K_i^{\beta(u)_i} & \textrm{if }i\notin V_{p_1} \cup V_{p_2} \\
K_i^{\beta(u)_i} K_{i_2}^{u_{\varphi(i)}}  & \textrm{if }i\in V_{p_1}  \setminus \{i_1\} \\
K_i^{\beta(u)_i} K_{i_1}^{u_{\varphi(i)}}  & \textrm{if }i\in V_{p_2}  \setminus \{i_2\} \\
K_{i_1} ^{u_{\varphi(i_1)} } K_{i_2} ^{u_{\varphi(i_2)} } & \textrm{if }i=i_1,i_2.
\end{cases} $$
Thus
\begin{align*}
{K'}^{u}   & = \prod_{j=1}^{m_{G'}} (K')_{j}^{u_j} = (K')_k^{u_k}\hspace{-0.2cm} \prod_{i\in [m_G]\setminus \{i_1,i_2\}} \hspace{-0.2cm} (K')_{\varphi(i)}^{u_{\varphi(i)}} 
= \mathopen{\raisebox{-0.5ex}{$\Biggl[$}}\prod_{i\in [m_G]\setminus \{i_1,i_2\}} \hspace{-0.3cm} K_i^{\beta(u)_i}   \mathopen{\raisebox{-0.5ex}{$\Biggl]$}}  \mathopen{\raisebox{-0.5ex}{$\Biggl[$}}  \prod_{i\in V_{p_1} }  K_{i_2}^{u_{\varphi(i)}}  \prod_{i\in V_{p_2} }  K_{i_1}^{u_{\varphi(i)}}   \mathopen{\raisebox{-0.5ex}{$\Biggl]$}}  \\ 
&= \mathopen{\raisebox{-0.5ex}{$\Biggl[$}} \prod_{i\in [m_G]\setminus \{i_1,i_2\}}    \hspace{-0.2cm} K_i^{\beta(u)_i}     \mathopen{\raisebox{-0.5ex}{$\Biggl]$}}
 K_{i_2}^{\beta(u)_{i_2} } K_{i_1}^{\beta(u)_{i_1}} = K^{\beta(u)}. 
  \end{align*}
  This concludes the proof.
\end{proof}

We are ready to prove Theorem~\ref{thm:inclusion}(i). Using Theorem~\ref{prop:k}(i), $(\mN,G',\k)$ is node balanced if and only if 
${K'}^u=1$ for all $u\in \ker \Psi_{G'}$. By Proposition~\ref{prop:Kagree}, this is equivalent to $K^{\beta(u)}=1$ for all  $u\in \ker \Psi_{G'}$, which in turn is equivalent to $K^u=1$ for all $u\in \ker \Psi_{G}$, because $\beta$ is an isomorphism. Using Theorem~\ref{prop:k}(i), the later condition is equivalent to $(\mN,G,\k)$ being node balanced. 
 This concludes the proof of Theorem~\ref{thm:inclusion}(i).
 \qed

\paragraph{Case 2: The nodes $i_1,i_2$ belong to the same connected component of $G$.}  In this situation, we have  $\delta_{G'} = \delta_{G}-1$ and $m_G=m_G'+1$. 
Let  $G_p$ be the connected component of $G$ containing $i_1,i_2$, and $G'_q$  the connected component of $G'$ that contains $k$. 
Again, we have  that $\varphi$ induces an isomorphism between $G$ and $G'$ outside the connected components $G_p$ and $G'_q$.

Consider the linear and injective morphism
$ \gamma \colon \R^{m'_G} \rightarrow \R^{m_{G}}$
defined for $i\in [m_G]$ by
$$\gamma(x)_{i} =\begin{cases}
x_{\varphi(i)} & \textrm{if } i\neq i_2 \\
0 & \textrm{if } i =i_2.
\end{cases} $$
(All  constructions could be done alternatively by replacing $i_2$ with $i_1$).

\begin{proposition}\label{prop:basis_ker}
Let $\delta(i_1,i_2)\in \R^{m_G}$ be the vector with  $\delta(i_1,i_2)_{i_1} = 1$, $\delta(i_1,i_2)_{i_2}=-1$ and the rest of entries equal to zero.
Let $\{u_1,\dots,u_{\delta_{G'}}\}$ be a basis of $ \ker \Psi_{G'}$.
Then
$$\big\{ \gamma(u_1),\dots, \gamma(u_{\delta_{G'}}),\delta(i_1,i_2) \big\}$$
is a basis of $ \ker \Psi_{G}$.
\end{proposition}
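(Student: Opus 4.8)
The plan is to exhibit the listed family as $\delta_G$ linearly independent elements of $\ker\Psi_G$; since $\dim\ker\Psi_G=\delta_G=\delta_{G'}+1$ by Proposition~\ref{lem:defkernel} together with the relations $\delta_{G'}=\delta_G-1$ and $m_G=m_{G'}+1$ recorded at the start of this case, this forces the family to be a basis, so I never need to argue spanning directly.

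First I would compute the effect of $\gamma$ on $\Psi_G$. Using $Y_i=Y'_{\varphi(i)}$, that $\gamma(x)_{i_2}=0$, and that $\varphi$ restricts to a bijection $V_G\setminus\{i_2\}\to V_{G'}$ (it sends $i_1$ to $k$ and is a bijection $V_G\setminus\{i_1,i_2\}\to V_{G'}\setminus\{k\}$, by Lemma~\ref{lem:surjective} and the fact that $i_1,i_2$ are the only pair identified by $\varphi$), one gets $Y\gamma(x)=Y'x$ for every $x\in\R^{m_{G'}}$. Similarly, every connected component $G_t$ of $G$ with $t\neq p$ is mapped isomorphically by $\varphi$ onto a component of $G'$, while $\varphi$ restricts to a bijection $V_p\setminus\{i_2\}\to V'_q$; since $\gamma(x)_{i_2}=0$, in every case $\sum_{i\in V_t}\gamma(x)_i$ equals the sum of the entries of $x$ over the corresponding component of $G'$. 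Because $i_1,i_2$ lie in the same component, $\ell_G=\ell_{G'}$, so this shows $\Psi_G(\gamma(x))$ coincides with $\Psi_{G'}(x)$ up to a fixed reordering of the last $\ell_G$ coordinates; in particular $\gamma(\ker\Psi_{G'})\subseteq\ker\Psi_G$, so $\gamma(u_1),\dots,\gamma(u_{\delta_{G'}})\in\ker\Psi_G$.

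Next I would check directly that $\delta(i_1,i_2)\in\ker\Psi_G$: since $\varphi(i_1)=\varphi(i_2)=k$ we have $Y\delta(i_1,i_2)=Y_{i_1}-Y_{i_2}=Y'_k-Y'_k=0$, and as $i_1,i_2$ lie in the same component $G_p$ every component sum of $\delta(i_1,i_2)$ vanishes (the one over $G_p$ equals $1-1=0$, the rest are $0$). This is exactly where the Case~2 hypothesis is used. For linear independence, suppose $\sum_{j}\lambda_j\gamma(u_j)+\mu\,\delta(i_1,i_2)=0$; reading off the $i_2$-coordinate kills every $\gamma(u_j)$-term and leaves $-\mu=0$, so $\mu=0$, and then $\gamma\!\left(\sum_j\lambda_ju_j\right)=0$ with $\gamma$ injective gives $\sum_j\lambda_ju_j=0$, whence all $\lambda_j=0$ since $\{u_j\}$ is a basis of $\ker\Psi_{G'}$. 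Thus we have $\delta_{G'}+1=\delta_G$ linearly independent vectors in the $\delta_G$-dimensional space $\ker\Psi_G$, proving the claim.

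All computations are elementary linear algebra; the only point needing care is the component-sum bookkeeping, namely tracking that $\varphi$ is a graph isomorphism away from $G_p$ but two-to-one at $k$ on $G_p$, and that deleting the $i_2$-coordinate (which $\gamma$ sets to zero) restores a bijection $V_p\setminus\{i_2\}\to V'_q$. This is also the place where the assumption that $i_1,i_2$ lie in the same connected component enters, both for $\ell_G=\ell_{G'}$ and for $\delta(i_1,i_2)$ belonging to $\ker\Psi_G$.
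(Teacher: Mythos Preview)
Your proof is correct and follows essentially the same approach as the paper's: both verify that $\delta(i_1,i_2)$ and the $\gamma(u_j)$ lie in $\ker\Psi_G$ via the identities $Y_i=Y'_{\varphi(i)}$ and the component-sum bookkeeping, then obtain linear independence from the vanishing $i_2$-coordinate of $\gamma(u_j)$ together with the injectivity of $\gamma$, and conclude by the dimension count $\delta_G=\delta_{G'}+1$. The only cosmetic difference is that you spell out the linear-independence step via an explicit relation $\sum_j\lambda_j\gamma(u_j)+\mu\,\delta(i_1,i_2)=0$, whereas the paper simply notes that $\delta(i_1,i_2)\notin\langle\gamma(u_j)\rangle$.
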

\begin{proof}
Since the $i_2$-th component of $\gamma(u_j)$ is zero for all $j\in [\delta_{G'}]$, the vector $\delta(i_1,i_2)$ does not belong to $\langle  \gamma(u_1),\dots, \gamma(u_{\delta_G}) \rangle$. Since $\gamma$ is injective, the vectors $\gamma(u_1),\dots, \gamma(u_{\delta_{G'}}),\delta(i_1,i_2)$ are linearly independent and generate a vector space of dimension $\delta_{G'}+1 = \delta_G=\dim \ker \Psi_{G}$. Thus all we need is to show that  $\delta(i_1,i_2),\gamma(u)\in \ker \Psi_{G}$  for all $u\in  \ker \Psi_{G'}$.

We have
$$ Y \delta(i_1,i_2) = Y_{i_1}  - Y_{i_2} = Y'_{k} - Y'_{k} =0. $$
 Since $i_1$ and $i_2$ belong to the same connected component of $G$, we have
$\sum_{i\in G_j} \delta(i_1,i_2)_i=0$ for all connected components $G_j$ of $G$. Thus $\delta(i_1,i_2)\in \ker \Psi_{G}$.
 
For a connected component $G_j$ of $G$, let $G'_{t(j)}$ be the corresponding connected component of $G'$ under the morphism $\varphi$, such that $t(p)=q$. For $u\in  \ker \Psi_{G'}$ and $j\in [\ell_G]$, we have
\begin{align*}
 Y \gamma(u) & = \sum_{i=1}^{m_G} \gamma(u)_i  Y_i =
\sum_{i=1, i\neq i_2}^{m_G}   u_{\varphi(i)} Y_i  = \sum_{i=1, i\neq i_2}^{m_G}   u_{\varphi(i)} Y'_{\varphi(i)} 
=  \sum_{j=1}^{m_{G'}}   u_{j} Y'_{j} = Y' u =0, \\ 
\sum_{i\in G_j} \gamma(u)_i &=   \sum_{i\in G_j, i\neq i_2} u_{\varphi(i)} =   \sum_{i\in G'_{t(j)}} u_{i} =0.
\end{align*}
This shows that $\gamma(u)\in \ker \Psi_{G}$, which concludes the proof.
\end{proof}

\begin{proposition}\label{prop:formal}
Assuming
$K^{\delta(i_1,i_2)} =1,$ then for all $u\in \ker \Psi_{G'}$ it holds
$$ K^{\gamma(u)} ={K'}^{u}.$$
\end{proposition}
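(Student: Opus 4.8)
The plan is to reduce the identity to the single connected component of $G'$ affected by the collapse, and then to settle it with the matrix--tree theorem. First a bookkeeping remark: let $\sigma\colon V_{G'}\to V_G$ be the section of $\varphi$ given by $\sigma(j)=\varphi^{-1}(j)$ for $j\neq k$ and $\sigma(k)=i_1$. Since $\gamma(u)_{i_2}=0$ and $\gamma(u)_i=u_{\varphi(i)}$ for $i\neq i_2$, and since $i\mapsto\varphi(i)$ is a bijection from $V_G\setminus\{i_2\}$ onto $V_{G'}$, reindexing gives $K^{\gamma(u)}=\prod_{j\in V_{G'}}K_{G,\sigma(j)}^{\,u_j}$, hence $K^{\gamma(u)}/{K'}^{u}=\prod_{j\in V_{G'}}(K_{G,\sigma(j)}/K_{G',j})^{u_j}$. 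Let $G'_q$ be the connected component of $G'$ containing $k$, with node set $V'_q$, and $G_p$ the component of $G$ containing $i_1,i_2$. On every other component $\varphi$ restricts to an isomorphism of reaction graphs (as recalled just before the statement), so it matches in-trees with equal $\k$-weights and $K_{G,\sigma(j)}=K_{G',j}$ for $j\notin V'_q$; thus only the factors with $j\in V'_q$ survive. Finally $u\in\ker\Psi_{G'}\subseteq\im C_{G'}$ by Proposition~\ref{lem:defkernel}, and $C_{G'}$ is block diagonal along components, so $\sum_{j\in V'_q}u_j=0$. It therefore suffices to show that the ratio $K_{G,\sigma(j)}/K_{G',j}$ does not depend on $j\in V'_q$; the displayed product is then that common value raised to the exponent $\sum_{j\in V'_q}u_j=0$.

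To prove this ratio-constancy I would pass to the weighted Laplacians. Write $H=G_p$, $H'=G'_q$, and let $L^{H}_{\k}$, $L^{H'}_{\k}$ be the Laplacians of these (strongly connected) components, with off-diagonal entry in position $(a,b)$ equal to the rate constant $\k_{b\to a}$ of the reaction $Y_b\to Y_a$ and zero column sums, as in~\eqref{eq:Kminor}. By the matrix--tree theorem the vectors $(K_{G,a})_{a\in V_H}$ and $(K_{G',a'})_{a'\in V_{H'}}$ span the one-dimensional kernels of $L^{H}_{\k}$ and $L^{H'}_{\k}$. Set $w\in\R^{V_{H'}}$ with $w_{a'}=K_{G,\sigma(a')}$; in particular $w_k=K_{G,i_1}$, which equals $K_{G,i_2}$ by the standing hypothesis $K^{\delta(i_1,i_2)}=1$. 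Since $w$ is positive, ratio-constancy is equivalent to $w=\lambda\,(K_{G',a'})_{a'}$ for some $\lambda>0$, i.e.\ to $L^{H'}_{\k}w=0$, which I would check node by node. This uses that $\varphi$ induces a bijection between the edges of $H$ and those of $H'$ and, for $a\in V_H\setminus\{i_1,i_2\}$, between the edges incident to $a$ and those incident to $\varphi(a)$ with matching rate constants; that rests on $R$ being a bijection and on $i_1,i_2$ being non-adjacent in $H$ (otherwise $G'$ would carry a self-loop and fail to be a reaction graph).

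For the node check I would rewrite $L^{H}_{\k}K_G=0$ as the balance $\sum_{b\to a\in E_H}\k_{b\to a}K_{G,b}=\big(\sum_{a\to c\in E_H}\k_{a\to c}\big)K_{G,a}$ at each $a\in V_H$, and compare with the analogous expression for $w$ in $H'$. At a node $a'\neq k$, with $a=\varphi^{-1}(a')$, the two balances agree termwise except for the edge $k\to a'$ of $H'$, which lifts to $i_1\to a$ or $i_2\to a$ in $H$; its contribution $\k_{k\to a'}w_k=\k_{k\to a'}K_{G,i_1}$ matches the corresponding term of the $H$-balance precisely because $K_{G,i_1}=K_{G,i_2}$. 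At $a'=k$, the inflow equals the sum of the inflows to $i_1$ and to $i_2$ in $H$, which by $L^{H}_{\k}K_G=0$ equals $\big(\sum_{i_1\to c}\k\big)K_{G,i_1}+\big(\sum_{i_2\to c}\k\big)K_{G,i_2}$, while the outflow is $\big(\sum_{i_1\to c}\k+\sum_{i_2\to c}\k\big)w_k=\big(\sum_{i_1\to c}\k+\sum_{i_2\to c}\k\big)K_{G,i_1}$; once more $K_{G,i_2}=K_{G,i_1}$ makes these equal. Hence $L^{H'}_{\k}w=0$, so $K_{G,\sigma(j)}/K_{G',j}$ is constant on $V'_q$, and the Proposition follows from the reduction of the first paragraph.

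The main obstacle I anticipate is purely combinatorial: making the edge correspondence induced by $\varphi$ fully precise on the two distinguished components and on the edges at each unaffected node, and verifying that the Laplacian entries at $k$ are exactly the coordinate sums of those at $i_1$ and $i_2$ (which is where non-adjacency of $i_1,i_2$ enters). Once that is in place the hypothesis $K_{G,i_1}=K_{G,i_2}$ is used in exactly two spots — the edge leaving $k$ and the balance at $k$ — and the computation closes. An alternative, more hands-on route would replace the Laplacian step by directly lifting each spanning in-tree of $H'$ to a spanning in-forest of $H$ with two roots and tracking which root-tree contains $i_2$, but that bookkeeping looks messier than the kernel argument above.
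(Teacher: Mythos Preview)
Your argument is correct and follows a genuinely different route from the paper. Both proofs make the same opening reduction --- restrict to the single affected component and reduce to showing that $K_{G,\sigma(j)}/K_{G',j}$ is constant on $V'_q$ (the paper writes this as $K_1K'_i=K_iK'_1$). The divergence is in how that ratio-constancy is established. The paper exhibits an explicit bijection between spanning in-trees of $G'_q$ and certain two-component spanning in-forests of $G_p$, invokes the All-Minors Matrix-Tree theorem of Moon to rewrite the resulting tree/forest sums as $2\times 2$ minors of the Laplacian, and then appeals to a Pl\"ucker relation among those minors to close the identity. You instead use only the \emph{basic} Matrix-Tree fact that $(K_{G,a})_a$ spans the one-dimensional kernel of the Laplacian on a strongly connected component, and verify by a direct node-by-node balance check that the pushed vector $w=(K_{G,\sigma(a')})_{a'}$ lies in $\ker L^{H'}_\k$; the hypothesis $K_{G,i_1}=K_{G,i_2}$ is exactly what makes the balance at $k$ (and at neighbours of $k$) go through, and non-adjacency of $i_1,i_2$ --- forced by $Y_{i_1}=Y_{i_2}$ and the absence of self-reactions --- is what keeps the edge bookkeeping clean.

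Your approach is more elementary: it avoids the All-Minors theorem and the Pl\"ucker relations entirely, replacing a determinantal identity by a linear-algebraic kernel argument. What the paper's route buys is an explicit quantitative relation between $K'_i$ and the $K_j$ via the forest decomposition (useful if one wanted to track the constant $\lambda$), whereas your argument shows proportionality without computing the proportionality factor --- which is all the Proposition needs.
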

\begin{proof}
Because $\gamma$ induces an isomorphism of digraphs outside $G_p$ and $G'_q$,  we can without loss of generality  restrict  the proof to the case, where $G,G'$ are strongly connected. For simplicity, we 
 let $m=m_{G'}$ such that $V_G=[m+1]$ and $V_{G'}=[m]$, and assume that $\varphi$ is the identity on $\{1,\dots,m\}$ and sends $m+1$ to $1$ (so $i_1=1, i_2=m+1, k=1$). In this setting, $K^{\delta(i_1,i_2)} =1$ is equivalent to $K_1=K_{m+1}$.
 
Assume that it holds 
\begin{equation}\label{eq:K1}
K_{1} K'_{i} - K_{i} K'_{1}=0,\qquad \textrm{for all }i=1,\dots,m.
\end{equation}
Not that since the graphs are strongly connected, none of $K_i,K_i'$ is zero.
 Then, using $K_{1} = K_{m+1}$, we have 
 \begin{align*}
K^{\gamma(u)} &= \prod_{i=1}^{m} K_{i}^{u_i}=
 \prod_{i=1}^{m}  K_{1}^{u_i} \, \frac{K_{i}^{u_i} }{K_{1}^{u_i}} 
 = \left[\prod_{i=1}^{m} K_{1}^{u_i}\right]  \prod_{i=1}^{m}   \left[   \frac{K'_{i}}{K'_{1}}\right]^{u_i}
  = \left[\frac{K_{1}}{K_1'}\right]^{\sum_{i=1}^{m} u_i}  \prod_{i=1}^{m}  (K')_{i}^{u_i} = (K')^u, 
  \end{align*}
where we use $\sum_{i=1}^{m} u_i=0$ since $u\in \ker \Psi_{G'}$.

Thus, all we need is to show that  \eqref{eq:K1} holds provided $K_1=K_{m+1}$. 
Below we use the indices $k,\ell$ generically.
Let $\Theta'_j$ be the set of spanning trees of $G'$ rooted at $j$. Given $F,B\subseteq [m+1]$ of cardinality $M$, let $\Theta(F,B)$ be the set of spanning forests of $G$ with $M$ connected components, such that each component is a tree rooted at one element of $B$ and contains exactly one element of $F$. 
Let 
$$\Theta_{i,j} = \Theta(\{1,m+1\},\{i,j\}). $$
If $\zeta\in \Theta_{i,m+1}$, then $\zeta$ is a spanning forest that consists of two trees $\zeta_i,\zeta_{m+1}$ rooted respectively at $i$ and $m+1$ such that $1$ is a node of $\zeta_i$. Analogously, $\zeta\in \Theta_{1,i} $ is a spanning forest that consists of two trees $\zeta_1,\zeta_{i}$  rooted respectively at $1$ and $i$   such that $m+1$ is a node of $\zeta_i$.
 Then,  $\varphi$ induces two one-to-one correspondences
\begin{align*}
\Theta'_i & \longleftrightarrow  \Theta_{i,m+1}  \sqcup \Theta_{1,i}  &
\Theta'_1 & \longleftrightarrow   \Theta_{1,m+1},
\end{align*}
for $2\leq i\leq m$.
The one-to-one correspondence between $E_G$ and $E_{G'}$ induces a bijective map $\widehat{\varphi}$ from the set of subgraphs of $G$ to the set of subgraphs of $G'$. It is thus enough to show that $\widehat{\varphi}$ maps $ \Theta_{i,m+1}  \sqcup \Theta_{1,i}$  to $\Theta'_i$ and 
$  \Theta_{1,m+1}$ to $\Theta'_1$. First, observe that for any subgraph $\zeta$ of $G$, $\widehat{\varphi}(\zeta)$   contains an undirected cycle if and only if $\zeta$ contains an undirected cycle or an undirected path joining 1 and $m+1$. Hence, for $\zeta$  in $\Theta_{i,m+1}$, $\Theta_{1,i}$ or $\Theta_{1,m+1}$,  $\widehat{\varphi}(\zeta)$ is  acyclic. Moreover, $\widehat{\varphi}(\zeta)$ is connected because $\zeta$ consists of two disjoint trees, one containing 1 and the other containing $m+1$. Hence, $\widehat{\varphi}(\zeta)$ is a tree, and since $\zeta$ is a spanning forest, $\widehat{\varphi}(\zeta)$ is a spanning tree. Finally, if $\zeta\in\Theta_{i,m+1}  \sqcup \Theta_{1,i}$, then $\zeta$ has no edge with source $i$, so $\widehat{\varphi}(\zeta)\in\Theta'_i$. Similarly, $\zeta\in\Theta_{1,m+1}$ has no edge with source $1$ or $m+1$, so $\widehat{\varphi}(\zeta)\in\Theta'_1$, as desired.

By hypothesis, $K_1=K_{m+1}$. Thus
\begin{equation}\label{eq:rewriteK1}
K_1 K_i' = K_1 \left[ \sum_{\zeta\in  \Theta_{i,m+1}} \zeta^\k + \sum_{\zeta\in  \Theta_{1,i}} \zeta^\k \right] =
K_1 \left[ \sum_{\zeta\in  \Theta_{i,m+1}} \zeta^\k\right]+ K_{m+1} \left[ \sum_{\zeta\in  \Theta_{1,i}} \zeta^\k \right].
\end{equation}

We make now use of the All-Minors Matrix-Tree theorem, which extends equation \eqref{eq:Kminor}. 
Since we are assuming $G$ is connected, the Laplacian matrix $L$ is of size $(m+1)\times (m+1)$ (we omit reference to $\k$ for simplicity). Recall that $L_{(j)}$ is the minor of $L$ obtained by removing the row $m+1$ and columns $j$ of $L$ and taking the determinant. 
Let $L_{(j,k)}$ for $j<k$ be the minor of $L$ obtained by removing rows $1,m+1$ and columns $j,k$. 
Then it holds  that \cite[Th 3.1]{Moon94} 
$$L_{(j,k)} = (-1)^{j + k +1} \sum_{\zeta\in  \Theta_{j,k}} \zeta^\k. $$
In view of \eqref{eq:rewriteK1} and \eqref{eq:Kminor}, to prove \eqref{eq:K1} we need to show that
$$(-1)^{1+i+m+1+1}  L_{(1)} L_{(i,m+1)} + (-1)^{m+1+i+1+1}  L_{(m+1)} L_{(1,i)} = (-1)^{i+1+m+1+1}  L_{(i)} L_{(1,m+1)},$$
that is, we have to show that it holds
\begin{equation}\label{eq:32}
L_{(1)} L_{(i,m+1)} +   L_{(m+1)} L_{(1,i)} =  L_{(i)} L_{(1,m+1)}.
\end{equation}

We find $L_{(1)}$, $L_{(m+1)}$ and $L_{(i)}$ by expanding the corresponding submatrix of the Laplacian used for the computation of each of the minors along the first row. Then \eqref{eq:32} is equivalent to
\begin{equation}\label{eq:minors1}
	\sum_{j=2}^{m+1} (-1)^{j} L_{(1,j)} L_{(i,m+1)} +   \sum_{j=1}^{m} (-1)^{j+1} L_{(j,m+1)}  L_{(1,i)} =  \sum_{j=1,j\neq i }^{m+1} (-1)^{\epsilon_j} L_{(i,j)}   L_{(1,m+1)}.
\end{equation}
where $\epsilon_j=j+1$ if $j<i$ and $j$ if $j>i$.
 
Equation~\eqref{eq:minors1} has three  sums, which we refer to as the first, second and third sum for simplicity, reading from left to right.
The summand for $j=i$ in the first two sums is respectively $(-1)^i L_{(1,i)} L_{(i,m+1)}$ and $(-1)^{i+1} L_{(i,m+1)}  L_{(1,i)}$, which cancel out. 
The two terms for $j=1$ in the second and third sums agree, using that $\epsilon_1=2$. Finally, the summands for $j=m+1$ in the first and the third sums agree since  $\epsilon_{m+1}=m+1$.
 
 We consider now the three summands, one for each  sum, corresponding to a fixed $j\in \{2,\dots,m\}$, $j\neq i$. 
We will use the Pl\"ucker relations on the maximal minors of a rectangular matrix \cite{Kleiman:1972kl}. These  are as follows. 
 Consider a $d\times n$ matrix  $A$, $d<n$. For a set $I\subseteq [n]$, let $A_{I}$ be the minor obtained after removing the columns with index in $I$ of $A$. Consider two sets $I,K$ of cardinality $n-d+1$ and $n-d-1$, respectively. Then it holds that
 $$\sum_{k\in I\setminus K} (-1)^{\bar{\epsilon}_k}  A_{I\setminus k} A_{K\cup \{k\}}=0,$$
 with $\bar{\epsilon}_k= \# \{\ell \in I \st k<\ell\} +  \# \{\ell \in K \st k<\ell\}$.
 (See for example \cite{Kleiman:1972kl}. Note that the formula given in \cite{Kleiman:1972kl} is stated in complementary notation,  indicating the columns that are kept to construct the minor, and not those that are removed).
 
 Let $A$ be the $(m-1)\times (m+1)$ submatrix of $L$ obtained by removing the first and last rows. 
  We apply the Pl\"ucker relation to $A$ with the sets $I=\{1,j,m+1\}$, $K=\{i\}$ and obtain 
  $$ (-1)^{\bar{\epsilon}_{m+1}} L_{(1,j)} L_{(i,m+1)} + (-1)^{\bar{\epsilon}_j} L_{(1,m+1)} L_{(i,j)} + 
  (-1)^{\bar{\epsilon}_1} L_{(j,m+1)} L_{(1,i)}  =0. $$ 
  We have $\bar{\epsilon}_{m+1}=0$,  $\bar{\epsilon}_{1}= 2+1=3,$ and $\bar{\epsilon}_{j} = 1$ if $j>i$, and $\bar{\epsilon}_{j} = 2$ if $j<i$. 
  Thus, after rearranging the terms, we obtain
  $$(-1)^{j} L_{(1,j)} L_{(i,m+1)} +   (-1)^{j+1} L_{(j,m+1)}  L_{(1,i)} -   (-1)^{\epsilon_j} L_{(j,m+1)}   L_{(1,m+1)}=0.$$ 
  This implies that \eqref{eq:minors1}  holds and concludes the proof.  
\end{proof}

We are now ready to prove Theorem~\ref{thm:inclusion}(ii). By Theorem~\ref{prop:k}(i) and Proposition~\ref{prop:basis_ker}, 
$(\mN,G,\k)$ is node balanced if and only if $K^{\gamma(u)}=1$ for all $u\in \ker \Psi_{G'}$ and $K^{\delta(i_1,i_2)}=1$. By Proposition~\ref{prop:formal}, this is equivalent to ${K'}^{u}=1$ for all $u\in \ker \Psi_{G'}$ and $K_{i_1}=K_{i_2}$, which in turn is equivalent to 
$(\mN,G',\k)$ being node balanced and $K_{i_1}=K_{i_2}$, by Theorem~\ref{prop:k}(i).
 This concludes the proof.
 \qed

\medskip

\subsection*{Acknowledgements} EF and CW acknowledge funding from the Danish Research Council for Independent Research. CW is supported by  Dr.phil. Ragna Rask-Nielsen Grundforskningsfond (administered by the Royal Danish Academy of Sciences and Letters). Part of this work was done while EF and CW visited Universitat Polit\`ecnica de Catalunya (Barcelona) in Spring-Summer 2017.


 \end{document}